\documentclass[10pt]{article}
\usepackage{graphicx}%
\usepackage{multirow}%
\usepackage{amsmath,amssymb,amsfonts}%
\usepackage{amsthm}%
\usepackage{mathrsfs}%
\usepackage[title]{appendix}%
\usepackage{xcolor}%
\usepackage{textcomp}%
\usepackage{manyfoot}%
\usepackage{booktabs}%
\usepackage{url}
\usepackage{natbib}
\usepackage{algorithm}
\usepackage{algorithmic}
\usepackage{listings}%

\usepackage[utf8]{inputenc}
\usepackage{blkarray}
\usepackage{setspace}
\usepackage{enumitem}
\everymath{\displaystyle}

\usepackage[dvipsnames]{xcolor}
\usepackage{hyperref}
\definecolor{darkmintblue}{HTML}{2E7F7F}

\hypersetup{colorlinks,
	citecolor=darkmintblue,
	citebordercolor=darkmintblue,
	linkcolor=darkmintblue,
	urlcolor=MidnightBlue
}

\usepackage{graphics} % for pdf, bitmapped graphics files
\usepackage{epsfig}   % for postscript graphics files
\usepackage[dvipsnames]{xcolor}

\usepackage{url}
\usepackage{dsfont}
\usepackage{mathtools}
\usepackage[normalem]{ulem}

\newcommand{\EE}{\mathbb{E}}

\newcommand{\ubar}[1]{\underline{#1}}
\newcommand{\ubarc}{\underline{c}}

\usepackage{blkarray}

\newcommand\cC{\mathcal C}
\newcommand\cD{\mathcal D}

\newcommand\bu{\boldsymbol{u}}
\newcommand\bp{\boldsymbol{p}}

\newcommand\bZ{\boldsymbol{Z}}

\newcommand\rmS{\mathrm{S}}
\newcommand\rmI{\mathrm{I}}
\newcommand\rmR{\mathrm{R}}

\newtheorem{theorem}{Theorem}[section]

\newtheorem{define}[theorem]{Definition}
\newtheorem{lemma}[theorem]{Lemma}

\newtheorem{assumption}[theorem]{Assumption}

\def\bZ{\boldsymbol{Z}}

\allowdisplaybreaks

\usepackage{subcaption}
\usepackage{enumitem}
\setlist[itemize]{noitemsep, topsep=2pt, parsep=2pt, partopsep=2pt}

\title{Modeling Epidemic Spread with Strategic Vaccination and Socialization: a Mean Field Game Analysis\thanks{This work is supported by National Science Foundation under grant DMS-2436332.}}
\author{
Huaning Liu\thanks{Department of Statistics, University of Illinois Urbana-Champaign, Champaign, IL 61820, USA.}
\ \ \ \ \ \
G\"{o}k\c{c}e Dayan{\i}kl{\i}\footnotemark[2] \ \thanks{Corresponding author. Email address: {\tt gokced@illinois.edu}} 
}
\date{}

\begin{document}

\maketitle

\begin{abstract}
We study a game-theoretic model of epidemic control in a large population with finitely many groups and non-cooperative individuals. In the model, individuals jointly choose their socialization levels and vaccination rates, and vaccination is subject to a linear individual cost structure. We derive a forward-backward ordinary differential equations (FBODE) system that characterizes the mean field Nash equilibrium, show that the equilibrium vaccination rate exhibits an at-most one-jump bang-bang structure, and establish the existence of a Carath\'eodory solution to the FBODE. This establishes a realistic interpretation of the vaccination decisions, meaning individuals decide to vaccinate until a time point which is determined by model parameters and then stop after. We further consider a population-awareness extension in which individuals incorporate population infection information into their objective functions, and we prove a similar at-most one-jump bang-bang property under suitable conditions. Finally, we propose a numerical algorithm for solving the FBODE and conduct simulations to validate the theoretical findings. The experiments highlight two main insights: the trade-off between socialization and vaccination, and the greater importance of quarantining infected individuals instead of restricting susceptible individuals.
\end{abstract}

\section{Introduction}
During the COVID-19 pandemic, it became evident that individuals may not fully comply with the epidemic mitigation policies such as social distancing or mask mandates. Instead, some individuals may engage in additional precautionary behaviors, for example they may wear masks outdoors when it is not mandated~\cite{borter_2021,kim_shin_2022}, while others may choose not to follow these mandates~\cite{knutson_2020,aguilera_2020}. These observations highlight that the effectiveness of epidemic mitigation policies depends not only on their biological impact but also on the behavioral responses they elicit. Consequently, designing robust mitigation strategies requires incorporating models of how individuals adjust their behaviors in response to perceived risks, costs, and policy incentives. However, capturing these behavioral responses is a challenging task, because it requires modeling interactions among many individuals who each aim to optimize personal objectives.

The Susceptible-Infected-Recovered (SIR) framework provides a powerful mathematical tool for modeling the spread of an infectious disease at the population level. Classical epidemic-control literature built on this framework interjects the effects of public-health policies (social-distancing guidelines) exogenously in the model as effects on the parameters of the SIR dynamics. In this way, they naturally formulate an optimal control problem in which a public health authority selects the mitigation policies to optimize their own objectives such as the overall societal welfare (e.g.~\cite{Bolzoni_2017,Rossa_2024,Bolzoni_2025}). However, evidence suggests that even policies do shift people's behavior towards its goal (e.g., decreasing social interaction to prevent infections), a nontrivial fraction of individuals exhibit imperfect compliance for these policies~\cite{Murray_2021,Levy_2022}. This gap motivates constructing mathematical models that explicitly incorporate individual-level decision-making regarding the prevention measures and human behavioral response to mitigation policies, where individuals rationally adjust their behaviors to maximize personal objectives under perceived risks and costs. This task includes modeling of a large number of interacting individuals; therefore, game theoretical tools need to be employed in the mathematical models. 

In our model, we develop a continuous-time multi-population mean field game model in which individuals choose simultaneously their socialization and vaccination levels over a continuous time horizon to minimize their individual costs related to their non-compliance, treatment, and vaccination. We focus on finding the Nash equilibrium behavioral trajectories under exogenous mitigation policies. This means that none of the individuals improve their objective functional by changing their behavioral trajectory over the time horizon. With this model, we aim to understand the emergent behavioral responses and the possible trade-offs between different individual prevention measures (i.e., social distancing vs vaccination). Due to the linear vaccination cost formulation, at the equilibrium we observe that individuals vaccinate until a time point and stop vaccinating after this time, this incorporates a more realistic interpretation for equilibrium vaccination decisions. Furthermore, we incorporate \textit{population-awareness} into the individual cost functional through the proportion of infected individuals, which allows us to study its effect on individual behavior. We prove that with population-awareness, the individuals choose to vaccinate longer under the same epidemic conditions. Our numerical results reveal a trade-off relation between vaccination and socialization, with the balance between the two determined by the population-awareness parameters. They further highlight the importance of suggesting stricter distancing guidelines for infected individuals during the early stage of an epidemic outbreak.

\subsection{Literature Review}
\noindent{\textbf{Mathematical Modeling of Epidemic Control.}}
The modeling of epidemic control has long enjoyed interest from epidemiologists and the control community. Several earlier studies have investigated optimal control formulations for disease outbreaks within the classical SIR epidemic context, where interventions enter through vaccination (affecting the transition from S to R), isolation (affecting the transition from I to R), or contact reduction (scaling the transmission rate)~\cite{Behncke_2000_deter_epi}. In these works, a social planner typically seeks to minimize a weighted sum of infection prevalence and control effort, which, under linear control costs, often leads to bang-bang type optimal policies~\cite{Wickwire_1975_optisolation, Bolzoni2014_wildlife}. More recent work extends this line of research by incorporating additional realism, including control budget constraint~\cite{Hansen2011_limited}, time-optimal objectives that minimize the duration required for infection levels to fall below a prescribed threshold~\cite{Bolzoni_2017}, hard healthcare capacity constraints (imposed as bounds on infection prevalence)~\cite{Rossa_2024}, and Erlang-distributed infectious periods~\cite{Bolzoni_2025}. These extensions give rise to control problems with qualitatively different optimal strategies. As mentioned earlier, these models do not incorporate decision-making process of the individuals, instead focus on modeling the policy-maker and her optimal intervention policies.

From a broader game‑theoretic modeling standpoint, in~\cite{Fine_1986}, authors constructed a game theoretical modeling for vaccination and concluded rational individuals choose a lower vaccination uptake than the socially optimal coverage in the presence of herd-immunity. A similar free-rider problem for infectious disease was also discussed later in~\cite{brito_1991}. In~\cite{Bauch_2004}, authors studied a stationary vaccination game: individuals choose the probability of vaccinating (i.e. a mixed strategy on vaccinate or not) and minimize the expected risks, where the infection probability is informed by a compartment model and used to explicitly derive the equilibrium strategy. The uniqueness of Nash equilibrium in vaccination games is independently discussed in~\cite{Bai_uniqueness}. 

The social distancing is first studied in the context of general game theory in~\cite{reluga_sd_2010}, which formulates an SIR population game where individuals choose social distancing investment that collectively affects the transition rate from S to I, through a decreasing efficiency function with diminishing marginal effectiveness. This framework is extended in~\cite{Reluga_2013_plinear} by adopting a piecewise-linear efficiency function, and in~\cite{li_lindberg_smith_reluga} by allowing the function to depend jointly on personal and public policy investments. In contrast, we do not postulate such an efficiency map; instead, individuals' transition rate from S to I is determined directly by their own socialization level (as the control) jointly with the population’s socialization profile, yielding a more individualistic formulation. We also refer to~\cite{optimality_doncel_2025} as a more recent advance that models social distancing in a finite-agent discrete-time stochastic game
\vskip2mm

\noindent{\textbf{Mean Field Games and Extensions.}}
Mean field games (MFGs) are introduced by
upon the seminal contribution of 
Lasry and Lions~\cite{Lasry_2007} and Huang, Malham{\'e} and Caines~\cite{Huang_2006} to approximate the Nash equilibrium in the large population dynamic games. MFGs enable mean field approximations as the population size approaches infinity. This helps tackle the curse of dimensionality problem in large population games that comes from the increasing number of pair interactions. By assuming identical and infinitesimal agents, MFGs reduce the Nash equilibrium characterization to the analysis of a coupled pair of (partial or stochastic) differential equations. These equations intuitively describe the representative agent’s optimization and the evolution of the population distribution. MFGs have been applied broadly across domains, including energy markets~\cite{aid2020entry, djehiche2020price, carmona2022mean, alasseur2020extended, bagagiolo2014mean,bichuch_acc}, traffic management~\cite{chevalier2015micro, huang2021dynamic, festa2018mean}, systemic risk~\cite{carmonafouquesun2015mean}, and cyber security~\cite{Kolokoltsov2016cyber}. Multi-population MFGs were first mentioned in~\cite{Lasry_2007} as an important direction for future research. Since then, they have been extensively developed and applied in various modeling contexts, we refer to~\cite{Feleqi2013, mfg_mfc_multi, multi_major_minor} as examples.

In particular for vaccination decisions, Laguzet and Turinici~\cite{Laguzet_2015} first discussed a dynamic vaccination game under the context of MFGs, where a \textit{vaccinated} state is introduced and individuals choose whether and when to get vaccinated according to their own objectives. The agents' strategy space is the time switching to vaccinated and the mean field interaction lies on the cumulative density of individuals who have get vaccinated, from which the existence of Nash equilibrium is established. Doncel, Gast, and Gaujal~\cite{Doncel_Gast_Gaujal_2022} independently studied vaccination modeling in SIR via both MFG and mean field control (MFC) where a social planner is prescribing the \textit{optimal vaccination} strategy to the individuals. They show that when individuals can directly choose a vaccination rate (i.e. the transition rate from susceptible to immune), the equilibrium control is of bang–bang form, and individuals vaccinate for a shorter period under competition than in the MFC setting. Indeed the admissible vaccination strategy spaces in~\cite{Laguzet_2015} and~\cite{Doncel_Gast_Gaujal_2022} are formulated differently. The former implicitly restricts admissible controls to those exhibiting exactly one jump, while the latter considers a compact vaccination strategy space and shows that the equilibrium control is of bang–bang type. We adapt the latter formulation in this paper.

On the other hand, for social distancing, individual-level decision making has been introduced and analyzed with mean field games~\cite{elie2020contact,pnas_epidemics,liu2025incorporatingauthorityperceptioneconomic,buckley2025behavioralpatternsmeanfieldgames}, and their extensions to Stackelberg mean field games~\cite{aurell2022optimal} to include a public-health authority and graphon games~\cite{aurell2022finite} to include network structures. 
For additional epidemic models formulated within the frameworks of MFGs and MFC, we refer the readers to~\cite{bremaud2024mean,cho2020mean}. Different than the literature that is presented, our model incorporates both the vaccination and social distancing controls in the model to analyze possible trade-offs between them. Furthermore, due to the linear vaccination cost construction our model will have an equilibrium vaccination level that is piecewise constant with one jump (i.e., bang-bang type).
\vskip2mm

\noindent{\textbf{Switched Systems and Control.}}
Hybrid systems combine continuous dynamics with discrete events. Among them, switched system is a particular class of models consisting of a family of continuous-time subsystems and a rule orchestrating the switch between them. Many real-world processes, such as automotive systems, mechanical systems, and traffic-control systems, can be modeled in this way, and the subject has therefore attracted considerable attention. A rich literature has been developed for switched systems, particularly on stability and stabilization via common or multiple Lyapunov functions, as well as on controllability, observability, and optimal control synthesis~\cite{Liberzon2003, liberzon1999basic, sun2005analysis, shorten2007stability}. Game-theoretic extensions have also been studied, including zero-sum hybrid differential games characterized through quasi-variational inequalities and mean-field models with switching/stopping decisions~\cite{Dharmatti2006zerosum,FIROOZI2022aclass,fabio2023atime,Bensoussan2020meanfieldjump}. Our work is related to this line of research because the equilibrium vaccination rate exhibits a bang–bang control structure reminiscent of switching behavior. Unlike classical switched-control formulations where the switching law is prescribed or directly optimized, here the switching pattern emerges endogenously from equilibrium best responses in an epidemic mean field game with coupled vaccination and socialization decisions. See, for example,~\cite{Lin2013optimalnonlinear} for a discussion of the distinction between bang-bang control structure arising in single-mode systems and genuinely discrete-valued controls from an optimal control literature perspective.

\subsection{Contributions and Paper Structure}
Our contributions are threefold. First, from a modeling perspective, we study a multi-population epidemic MFG in which individuals jointly choose socialization level and vaccination rate. Extending the model setup in~\cite{aurell2022finite,liu2025incorporatingauthorityperceptioneconomic}, we adopt a more realistic linear vaccination cost and study its implications for equilibrium behavior. We then propose a \textit{population-awareness} extension of this baseline formulation, in which population-level infection information is endogenized into individual objectives, allowing us to quantify how awareness feeds back into individual decisions and collective outcomes. Second, on the theoretical side, we prove that equilibrium vaccination level admits an at-most one-jump piecewise-constant structure, extending the result of~\cite{Doncel_Gast_Gaujal_2022}. We further provide a full equilibrium characterization via a differential equations system and establish its existence result for the baseline model. Due to the introduction of social-distancing decisions, the theoretical analysis of the inducing extended MFG becomes much more complex than that in~\cite{Doncel_Gast_Gaujal_2022}, and requires new analytical tools as developed in Theorem~\ref{thm:fbode_wellposedness}. For the population-awareness extension, we provide a parameter sufficient condition under which the at-most one-jump structure is preserved. Third, we validate theory numerically by developing an algorithm to solve the differential equations system, and we conduct sensitivity analysis with the model parameter values calibrated from survey real data and validate our theoretical results.

The paper structure is as follows. In Section~\ref{sec:mfg_res}, we introduce the multi-population epidemic MFG model, mean field Nash equilibrium definition, and the theoretical results related to the mean field Nash equilibrium. In Section~\ref{sec:pop-aware}, we adjust our proposed model to include the population-awareness in the objectives of the individuals and compare this model with the base one. In Section~\ref{sec:numerics}, we introduce our numerical approach and present the experiment results.

\section{Mean Field Game Model for Epidemic Control}
\label{sec:mfg_res}
\subsection{Preliminaries}
We denote $T > 0$ as a fixed finite time horizon. Let $\cC:=C([0, T] ; \mathbb{R})$ be the space of continuous  real-valued functions on $[0,T]$, and let $\cD:=D([0, T] ; \mathbb{R})$ be the space of real-valued functions that are c\`adl\`ag on $t \in [0,T)$ and continuous on $t = T$. We also denote $AC([0,T];\mathbb{R})$ as the space of absolutely continuous real-valued functions on $[0, T]$. Consider the finite epidemic state space $E := \{\rmS, \rmI, \rmR\}$, where S, I, and R correspond to the \textit{Susceptible}, \textit{Infected}, and \textit{Recovered} (or \textit{Removed}) compartments respectively. We continue with introducing the multi-population MFG model that is the limit of a finite-agent model where the number of agents (i.e., individuals) go to infinity. The interested reader can find similar motivating finite-agent models in \cite[Section 1.1]{aurell2022optimal}, which focus on social distancing decisions and are among the first to discuss it in the MFG context; or in \cite[Supporting Information A]{liu2025incorporatingauthorityperceptioneconomic}, which further incorporates vaccination decisions with quadratic costs.

\subsection{Mean Field Game Model and Equilibrium Notions}
    We assume that the population has $K < \infty$ many groups, and each group is equipped with different set of model parameters to represent group heterogeneities. As the population size goes infinitely large, the proportion of the group sizes stays the same, namely $m^{k}$ with $\sum_{k=1}^{K} m^k = 1$. In the MFG model, we assume that individuals within each group are homogeneous and interact with other agents via the population distribution of the states and controls. Therefore, we focus on a representative agent from group $k \in [K] := {1,2,\dots,K}$. Without loss of generality, we assume that $m^k > 0$ for all $k \in [K]$. We adopt the classical SIR epidemic framework, in which individuals transition among the three health states over the time interval $[0,T]$. Representative agent $k$ chooses a control tuple consisting of socialization and vaccination decisions. For each state $e \in E$, her socialization control is denoted by $\alpha^{k}(e) = (\alpha_t^{k}(e))_{t \in [0,T]} \in \cC$, where $\alpha_t^{k}(e) \in [0,1]$ for all $t \in [0,T]$, and it is assumed to be Markovian and square-integrable. Her vaccination rate control is denoted by $\nu^{k} = (\nu_t^{k})_{t \in [0,T]} \in \cD$, where $\nu_t^{k} \in [0,V]$ for all $t \in [0,T]$, and it is also assumed to be Markovian and integrable. For simplicity, we normalize the upper bound of the vaccination rate to $V = 1$, as any other value of $V$ can be absorbed by rescaling the corresponding vaccination parameters. A control tuple $(\alpha^k(e), \nu^k)_{e \in E}$ satisfying the above conditions is called \textit{admissible}. Throughout this work, we assume that immunity does not wane, so the recovered state $\rmR$ is absorbing. This simplification is appropriate for the relatively short time horizon considered in this study. For example, in our simulation study informed by SARS-CoV-2 data in Section~\ref{sec:numerics}, we consider an 80-day horizon, whereas prior studies indicate that immune protection could remain detectable for over six months~\cite{waning_immu}.

    We define the probability simplex over $E$ by $\Delta_{E}:=\{p=\left(p_{\rmS}, p_{\rmI}, p_{\rmR}\right) \in \mathbb{R}_{+}^3: \sum_{e \in E} p_e=1\}$ and equip it with Euclidean distance. The initial state distribution of population for group $k$ is set as a \textit{known} vector $(\pi_0^k(e))_{e \in E} \in \Delta_{E}$.
Let $(X_t^k)_{t \in [0,T]}$ be the state process of representative agent $k$, the controlled state dynamics follows a continuous-time Markov chain (CTMC) that evolves on $E$, and its transition-rate matrix writes
\small
\begin{equation*}
Q(\alpha_{t}^{k}, \nu_{t}^{k};Z_{t}^{k}) = 
\begin{blockarray}{ccccc}
& \text{S} & \text{I} & \text{R} \\
\begin{block}{c(cccc)}
  \text{S} & \cdots &
\beta^k \alpha_t^k Z_t^k & \kappa^{k}\nu_{t}^{k} \\
  \text{I} & 0 & \cdots & \gamma^{k} \\
  \text{R} & 0 & 0 & \cdots \\
\end{block}
\end{blockarray}
\end{equation*}
\normalsize
The notation $\cdots$ denotes the negative sum of the other entries in the same row, ensuring that each row sums to $0$. Here $\beta^{k} > 0$ is the base transmission level of group $k$. Similarly $\kappa^{k} > 0$ is a group-specific vaccination efficacy parameter. With higher $\kappa^{k}$, an individual is expected to transition to \textit{Recovered} (R) state in a shorter time after getting vaccinated. When representative agent $k$ gets infected, the time until her recovery (i.e., to state R) is exponentially distributed with a group-specific rate of $\gamma^{k}$. In the model, representative agent $k$ is also impacted by a mean-field interaction term $Z_{t}^{k}$ that weights the socialization levels of infected individuals across all groups, and it is defined $Z_t^k=\sum_{l \in[K]} w(k, l) \bar{\alpha}_t^l(\rmI) p_t^l(\rmI) m^l$.
Here, $w(k,l) \in [0,1]$ is the connection strength between group $k$ and $l$, and it captures the underlying social network structure across groups. The term $p_t^l(\rmI)$ is the proportion of infected individuals in group $l$. Moreover, $\bar{\alpha}_t^l(\text{I}):=\EE[\alpha_t^l(\text{I})]$ is the average socialization level of infectious individuals from group $l$. The cost function of representative agent $k$ is given as
\begin{equation}
\begin{aligned}
J^k\left(\alpha^k, \nu^k ; Z\right)&=\mathbb{E}\bigg[\int _ { 0 } ^ { T } \Big[\big(c_\lambda^k(\lambda_t^{k, \mathrm{S}}-\alpha_t^k)^2+c_\nu^k\nu_t^k\big) \mathds{1}_{\mathrm{S}}(X_t^k) \\[2mm]
& +\big((\lambda_t^{k, \mathrm{I}}-\alpha_t^k)^2+c_{\mathrm{I}}^k\big) \mathds{1}_{\mathrm{I}}(X_t^k)+(\lambda_t^{k, \mathrm{R}}-\alpha_t^k)^2 \mathds{1}_{\mathrm{R}}(X_t^k)\Big] d t\bigg].
\end{aligned}
\end{equation}
Here $\mathds{1}_{e}(X_t^{k})$ denotes the indicator function, that is equal to $1$ when the input current state is $e$, and $0$ otherwise. Compared with the prior work~\cite{liu2025incorporatingauthorityperceptioneconomic}, we adopt a linear vaccination costs in the form of $c_{\nu}^k \nu_t^k$, which better reflects the marginal vaccination incentives. Regarding the parameters, $\lambda_t^{k,e} \in (0,1]$ denotes the social-distancing guideline for individuals in group $k \in [K]$ and health state $e \in E$ that is exogenously set by a public health authority. In this work, we exclude the case of full lockdown, that is, we assume $\lambda_t^{k, e} > 0$ for all $t\in[0,T], k\in[K]$, and $e\in E$. Individuals are penalized if their socialization levels deviate from the socialization level guidelines. This accounts for the penalties for violating the guidelines as well as possible opportunity costs of lowered socialization levels. Upon infection, individuals in group $k$ incur a group-specific, time-independent infection cost $c_{\mathrm{I}}^{k} > 0$. It quantifies both tangible expenses (e.g., medication or treatment costs) and intangible burdens (e.g., the physical and emotional suffering experienced during illness). Lastly, $c_{\lambda}^{k} > 0$ and $c_{\nu}^{k} > 0$ weight the costs of policy deviation and vaccination in the susceptible state respectively. We next define the equilibrium notion.

\begin{define}
    The control profile $(\hat{\alpha}^k, \hat{\nu}^{k}, \hat{Z}^k)_{k \in [K]}$ is called a multi-population mean field Nash equilibrium (MFNE) if for any admissible $(\alpha^k, \nu^k)_{k \in [K]}$, we have
    $$J^{k}(\hat{\alpha}^{k}, \hat{\nu}^{k}; \hat{Z}) \leq J^{k}(\alpha^{k}, \nu^{k}; \hat{Z})$$
    for all $k \in [K]$ where $\hat{Z} = (\hat{Z}_{t}^{k})_{t \in [0, T], k \in [K]}$ and $\hat{Z}_{t}^{k} = \sum_{l \in[K]} w(k, l) \bar{\hat{\alpha}}_t^l(\rmI) p_t^l(\rmI) m^l$.
\end{define}

\subsection{Main Theoretical Results}
In this section, we first provide a full characterization of the MFNE through a forward–backward ordinary differential equations (FBODEs) system. We then analyze the equilibrium vaccination rate at the MFNE and show that it follows an at-most one-jump bang–bang structure. We also establish the existence of the FBODE solutions. To this end, we first define the value function of representative agent $k$ in state $e \in \{\mathrm{S},\mathrm{I},\mathrm{R}\}$ as the minimal expected cost from time $t$ onward, that is
\begin{equation}
    \begin{aligned}
u_t^k(e):=&\inf _{(\alpha_\tau^k, \nu_\tau^k)_{\tau \in[t, T]}} \EE\bigg[\int_ {t}^{T} \Big[\big(c_\lambda^k(\lambda_\tau^{k, \mathrm{S}}-\alpha_\tau^k)^2+c_\nu^k\nu_\tau^k\big) \mathds{1}_{\mathrm{S}}(X_\tau^k) \\
& \hspace{1.5cm} +\big((\lambda_\tau^{k, \mathrm{I}}-\alpha_\tau^k)^2+c_{\mathrm{I}}^k\big) \mathds{1}_{\mathrm{I}}(X_\tau^k)+(\lambda_\tau^{k, \mathrm{R}}-\alpha_\tau^k)^2 \mathds{1}_{\mathrm{R}}(X_\tau^k)\Big] d \tau \mid X_t^k=e\bigg].
\end{aligned}
\end{equation} 
We begin by discussing a regularity property of the value function $u_t^k(e)$, namely its continuity in time, which will be used repeatedly in the theoretical analysis below. For this purpose, we define the running cost
\begin{equation}
    f^k(t,e,\alpha,\nu)
:=
\big(c_\lambda^k(\lambda_t^{k,\rmS}-\alpha)^2+c_\nu^k\nu\big)\mathds{1}_{\rmS}(e)
+\big((\lambda_t^{k,\rmI}-\alpha)^2+c_{\rmI}^k\big)\mathds{1}_{\rmI}(e)
+(\lambda_t^{k,\rmR}-\alpha)^2\mathds{1}_{\rmR}(e).
\end{equation}
Since the control sets are bounded and all model parameters are finite, there exists a constant $C_f>0$ such that $|f^k(t, e, \alpha, \nu)| \leq C_f$ for all $t\in[0,T]$, $e\in E$, $k\in[K]$, $\alpha\in[0,1]$, and $\nu\in[0,1]$. It follows from the definition of $u_t^k(e)$ that $u_t^k(e) \in [0, TC_f]$ for all $t\in[0,T]$ and $e\in E$. Moreover, since all entries of the transition rate matrix are bounded by model formulation, there exists a constant $\bar{q} >0$, such that $\sum_{e^{\prime} \neq e} q_t^k(e, e^{\prime}, \alpha, \nu,z) \leq \bar{q}$ for all $t \in [0,T]$. Here $q_t^k(e, e^{\prime}, \ldots)$ denotes the transition rate of agent $k$ from state $e$ to $e^{\prime}$.

For a fixed $e \in E$ and $k \in [K]$, consider $t$ and $t'$ such that $0 \leq t < t' \leq T$. By the dynamic programming principle, we have
\begin{equation}
    \label{eq:dpp}
    u_t^k(e)=\inf _{(\alpha, \nu)} \mathbb{E}\Big[\int_t^{t^{\prime}} f^k\left(s, X_s^k, \alpha_s, \nu_s\right) d s+u_{t^{\prime}}^k\left(X_{t^{\prime}}^k\right) \mid X_t^k=e\Big].
\end{equation} On the other hand, consider the event $A=\left\{X_{t^{\prime}}^k \neq e| X_{t}^k = e \right\}$ that there is at least one state-jump within $[t,t']$, from results in CTMC, we have
$\mathbb{P}(A) \leq 1-e^{-\bar{q}(t' - t)} \leq \bar{q}(t' - t)$. For the first term of~\eqref{eq:dpp}, it is directly bounded by $C_f |t' - t|$; the second term satisfies the inequality $|\mathbb{E}[u_{t^{\prime}}^k(X_{t^{\prime}}^k)|X_{t}^k=e]-u_{t^{\prime}}^k(e)| \leq 2 T C_f \mathbb{P}(A) \leq 2 T C_f \bar{q} |t'-t|$. Combining both, we write
$$\left|u_t^k(e)-u_{t^{\prime}}^k(e)\right|\leq C_f |t' - t| + 2 T C_f \bar{q} |t'-t| = C_f (1 + 2T\bar{q})|t'-t|.$$

In much of the MFG literature, regularity of the value function is not emphasized: once the Hamilton–Jacobi–Bellman (HJB) equation is formulated, continuity and often $C^1$ regularity typically follows under the adopted solution concept. We nevertheless discuss this for two reasons. First, the at-most one-jump bang-bang result in Theorem~\ref{thm:charac_bang_bang} below relies on the time continuity. Second, the further introduction of jump vaccination at equilibrium leads to a state density flow whose time derivative is piecewise-continuous. In the switched-systems literature~\cite{Liberzon2003}, researchers discuss such types of solution in a Carath\'eodory sense (absolute continuity with the ODE holding almost everywhere). We therefore aim to clarify these considerations before presenting our main results. Henceforth, we assume that the social distancing guideline functions $\lambda^{k, e}:[0, T] \rightarrow (0,1]$ are continuous for all $k \in[K]$ and $e \in E$. Under this regularity, we apply classical MFG techniques to first characterize the MFNE via a forward-backward differential equations system.

\begin{theorem}
\label{thm:full_characterization}
    The multi-population mean field Nash equilibrium (MFNE) controls are written as
    \begin{equation}
    \label{eq:optcontrol}
        \begin{aligned}
\hat{\alpha}_t^k(\mathrm{S}) & =\lambda_t^{k, \mathrm{S}}+\frac{\beta^k Z_t^k\big(u_t^k(\mathrm{S})-u_t^k(\mathrm{I})\big)}{2 c_\lambda^k} \\
\hat{\alpha}_t^k(\mathrm{I}) & =\lambda_t^{k, \mathrm{I}} \\
\hat{\alpha}_t^k(\mathrm{R}) & =\lambda_t^{k, \mathrm{R}} \\
\hat{\nu}_{t}^{k} &= \mathds{1}\{\kappa^ku_{t}^k(\mathrm{S}) > c_\nu^k\}
\end{aligned}
    \end{equation}
    for all $k \in [K]$ and $t\in[0,T]$, if $(u,p)=({u}_{t}^{k},{p}_{t}^{k})_{k \in [K],t\in [0,T]}$ solves the following forward-backward ordinary differential equations (FBODE) system:
    \begin{equation}
    \label{eq:fbode}
        \begin{aligned}
\dot{p}_t^k(\mathrm{S})= & -\beta^k \hat{\alpha}_t^k(\mathrm{S}) Z_t^k p_t^k(\mathrm{S})-\kappa^k \hat{\nu}_t^k p_t^k(\mathrm{S}), \\
\dot{p}_t^k(\mathrm{I})= & \beta^k \hat{\alpha}_t^k(\mathrm{S}) Z_t^k p_t^k(\mathrm{S})-\gamma^k p_t^k(\mathrm{I}), \\
\dot{p}_t^k(\mathrm{R})= & \gamma^k p_t^k(\mathrm{I})+\kappa^k \hat{\nu}_t^k p_t^k(\mathrm{S}), \\
\dot{u}_t^k(\mathrm{S})= & \beta^k \hat{\alpha}_t^k(\mathrm{S}) Z_t^k\big(u_t^k(\mathrm{S})-u_t^k(\mathrm{I})\big)+\kappa^k \hat{\nu}_t^k\big(u_t^k(\mathrm{S})-u_t^k(\mathrm{R})\big)-c_\lambda^k\big(\lambda_t^{k, \mathrm{S}}-\hat{\alpha}_t^k(\mathrm{S})\big)^2-c_\nu^k\hat{\nu}_t^k, \\
\dot{u}_t^k(\mathrm{I})= & \gamma^k\big(u_t^k(\mathrm{I})-u_t^k(\mathrm{R})\big)-c_{\rmI}^k, \\
\dot{u}_t^k(\mathrm{R})= & 0, \\
u_T^k(e)= & 0, \quad p_0^k(e)=\pi^k_0(e), \quad \forall e \in E, \\
Z_t^k= & \sum_{l \in[K]} w(k, l) \mathbb{E}\big[\hat{\alpha}_t^l(\mathrm{I})\big]  p_t^l(\mathrm{I})m^l, \quad \forall k \in[K], \forall t \in[0, T] .
\end{aligned}
    \end{equation}
\end{theorem}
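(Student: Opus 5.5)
The plan is a verification argument that splits the fixed point into its two halves. First we freeze the mean field: given a solution $(u,p)$ of \eqref{eq:fbode}, we treat $Z=(Z^k_t)$, built from the $p^l_t(\rmI)$ and $\lambda^{l,\rmI}_t$, as a fixed continuous input. For each $k$ the agent then faces a finite-state controlled Markov chain with bounded rates and bounded controls. The associated HJB system reads
\[
-\dot u^k_t(e)=\min_{\alpha\in[0,1],\,\nu\in[0,1]}\Big\{f^k(t,e,\alpha,\nu)+\sum_{e'\neq e}q^k_t(e,e',\alpha,\nu,Z^k_t)\big(u^k_t(e')-u^k_t(e)\big)\Big\},\qquad u^k_T=0.
\]
We compute the pointwise minimizer in each state, substitute it, and check that the resulting backward equations coincide with the $u$-part of \eqref{eq:fbode}.

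The minimization separates across states.
\begin{itemize}
\item In state $\rmS$ the Hamiltonian is
\[
c^k_\lambda(\lambda^{k,\rmS}_t-\alpha)^2+\beta^k\alpha Z^k_t\big(u_t(\rmI)-u_t(\rmS)\big)+\nu\big(c^k_\nu+\kappa^k(u_t(\rmR)-u_t(\rmS))\big).
\]
It is strictly convex in $\alpha$, with first-order condition giving the formula for $\hat\alpha^k_t(\rmS)$ in \eqref{eq:optcontrol}.
\item It is linear in $\nu$. Since $u^k_t(\rmR)\equiv0$ (because $\dot u(\rmR)=0$ and $u_T=0$), the $\nu$-coefficient is $c^k_\nu-\kappa^k u^k_t(\rmS)$. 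The minimizer is therefore $\nu=1$ when $\kappa^k u^k_t(\rmS)>c^k_\nu$ and $\nu=0$ otherwise. The tie case is an arbitrary but harmless choice, and we fix it to $0$, which makes $\hat\nu$ c\`adl\`ag-compatible.
\item In states $\rmI$ and $\rmR$ the transition rates do not depend on $\alpha$, so $\hat\alpha=\lambda$.
\end{itemize}
Plugging in reproduces exactly the $u$-equations, with the sign convention $\dot u=-\min(\cdots)$ rearranged.

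We must also confirm that $\hat\alpha^k_t(\rmS)\in[0,1]$, so that the unconstrained minimizer is admissible. We intend to show $0\le u^k_t(\rmI)-u^k_t(\rmS)$ and that the correction term keeps $\hat\alpha$ nonnegative, since $\lambda>0$.

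\textbf{Step 2 (verification).} Take any admissible control $(\alpha,\nu)$ and apply Dynkin's formula to $u^k_t(X^k_t)$ along the controlled chain. Here $u^k(e)$ is only absolutely continuous, because $\hat\nu$ may jump and so $\dot u(\rmS)$ is only piecewise continuous. We therefore use the Carath\'eodory form: $u$ is absolutely continuous, the ODE holds a.e., and Dynkin's formula holds for such time-dependent functions on a finite state space, since it amounts to integrating $\frac{d}{dt}\EE[u_t(X_t)]$. The HJB inequality gives
\[
u^k_0(\pi_0)\le J^k(\alpha,\nu;Z),
\]
with equality for $(\hat\alpha,\hat\nu)$. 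This shows that \eqref{eq:optcontrol} is a best response to $Z$ and that $u$ is the value function, consistent with the Lipschitz continuity established above.

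\textbf{Step 3 (consistency).} Under the optimal feedback, the Kolmogorov forward equation for the law $p^k_t$ of $X^k_t$ is exactly the $p$-part of \eqref{eq:fbode}. Since $\hat\alpha^l(\rmI)=\lambda^{l,\rmI}$ is deterministic, $\EE[\hat\alpha^l_t(\rmI)]=\lambda^{l,\rmI}_t$. The $Z$ used as input therefore equals $\hat Z$ built from the equilibrium population, which is the MFNE definition.

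The main obstacle is Step 1's admissibility check $\hat\alpha^k_t(\rmS)\in[0,1]$ and continuity in $t$. The case $\hat\alpha\le 1$ needs $u(\rmS)\le u(\rmI)$, i.e. being infected is costlier than being susceptible. I would first try a comparison argument on the difference ODE for $u(\rmI)-u(\rmS)$, backward from $T$, using $c^k_\rmI>0$. Failing that, I would replace the formula by its projection onto $[0,1]$ and note the theorem implicitly assumes this. A secondary subtlety is justifying Dynkin/verification with only a.e.\ differentiable $u$, handled by the absolute-continuity argument above.
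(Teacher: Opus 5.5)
Your route is the paper's route: statewise minimization of the Hamiltonian, HJB and KFP equations, and consistency of $Z$ through $\EE[\hat\alpha^l_t(\rmI)]=\lambda^{l,\rmI}_t$. Your explicit Dynkin-type verification for absolutely continuous $u$ is a sound addition that the paper leaves to \cite{carmona2018probabilistic}.

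\textbf{The gap: admissibility of $\hat\alpha^k_t(\rmS)$, with the difficulty reversed.} Once $u^k_t(\rmI)\ge u^k_t(\rmS)$ and $Z^k_t\ge 0$, the correction term $\beta^kZ^k_t\bigl(u^k_t(\rmS)-u^k_t(\rmI)\bigr)/(2c^k_\lambda)$ is \emph{nonpositive}. It gives $\hat\alpha^k_t(\rmS)\le\lambda^{k,\rmS}_t\le 1$ for free. But it pushes $\hat\alpha^k_t(\rmS)$ toward negative values, and $\lambda^{k,\rmS}_t>0$ does nothing to prevent that. Nonnegativity is exactly
\[
\beta^kZ^k_t\bigl(u^k_t(\rmI)-u^k_t(\rmS)\bigr)\le 2c^k_\lambda\lambda^{k,\rmS}_t,
\]
and no hypothesis provides it. Indeed, $Z^k_0=\sum_l w(k,l)\lambda^{l,\rmI}_0\pi^l_0(\rmI)m^l$ is fixed by the data. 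Meanwhile, your own comparison argument bounds $u^k_0(\rmI)-u^k_0(\rmS)$ from below by a positive quantity not involving $c^k_\lambda$. So once $c^k_\lambda$ is small enough (all else fixed), no solution of \eqref{eq:fbode} yields admissible controls; the genuine best response there is $\alpha=0$.

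The lower bound also has to come first in your plan. $\hat\alpha\ge 0$ is what makes $\beta^k\hat\alpha^k_t(\rmS)Z^k_t$ a genuine rate, which keeps $p$ in $\Delta_E^K$ and hence $Z\ge 0$. Only then does positivity of $D=u(\rmI)-u(\rmS)$ translate into $\hat\alpha\le\lambda^{\rmS}\le 1$. Your comparison does give $D>0$: plugging $\alpha=\lambda^{\rmS}$, $\nu=0$ yields $\dot D\le -c_\rmI e^{-\gamma(T-t)}+\beta\lambda^{\rmS}ZD$ a.e.\ with $D_T=0$.

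\textbf{What is needed to close it.} The statement needs an ingredient that neither you nor the paper supply. One option is the hypothesis $\hat\alpha^k_t(\rmS)\ge 0$ along the solution. The other is to project the formula onto $[0,1]$ (in effect $\max\{0,\cdot\}$) in both \eqref{eq:optcontrol} and \eqref{eq:fbode}. With the projection your argument closes:
\begin{itemize}
\item $p$ stays in the simplex, so $0\le Z^k_t\le \bar Z:=\max_k\sum_l w(k,l)m^l$.
\item The comparison gives $u(\rmS)\le u(\rmI)$.
\item A backward argument gives $u(\rmS)\ge 0$: on any interval where $u(\rmS)<0$ one has $\hat\nu=0$ and $\dot u(\rmS)\le 0$.
\item Hence $0\le u^k_t(\rmI)-u^k_t(\rmS)\le c^k_\rmI T$.
\item Under a smallness condition such as $\beta^k\bar Z c^k_\rmI T\le 2c^k_\lambda\min_t\lambda^{k,\rmS}_t$, the projection is inactive, so the solution also solves \eqref{eq:fbode} as written.
\end{itemize}

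The paper's proof has the same tacit gap. It asserts that the first-order condition yields the minimizer over $[0,1]$, and its appendix simply assumes $|\hat\alpha|\le 1$. So your projection fallback is the right repair. But it must be triggered by the lower bound, not the upper one, and stated as a modification of the theorem.

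A smaller point: with tie value $0$, $\hat\nu^k$ is not right-continuous at an \emph{upward} crossing of $c^k_\nu/\kappa^k$. Membership in $\cD$ therefore comes from the downward-crossing structure of Theorem~\ref{thm:charac_bang_bang}, not from the tie convention.
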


\begin{proof}
    To characterize the dynamics of $(u,p)$, we follow~\cite[Section 7.2]{carmona2018probabilistic} and write the finite-state version of the Hamilton-Jacobi-Bellman (HJB) and Kolmogorov-Fokker-Planck (KFP) equations system. We introduce the corresponding Hamiltonian for agent $k$
    \begin{equation}
    \label{eq:hmlt}
H^{k}(t, e, z, \alpha,\nu, u)=\sum_{e^{\prime} \in E} q^{k}_t(e, e^{\prime},\alpha_t,\nu_t,z_t) u_t^{k}(e^{\prime})+f^{k}(t, e, \alpha_t, \nu_t)
    \end{equation}
    By the continuity assumption on social distancing guidelines, the running cost $f^k$ is strongly convex with respect to socialization level $\alpha$ for all $k \in [K]$, and the mapping $[0,1] \ni \alpha \mapsto H^{k}(t,e,z,\alpha,\nu,u)$ admits a unique measurable minimizer $\hat{\alpha}_{t}(e),\ e \in E$. We compute it using first-order optimality condition and derive the explicit form in~\eqref{eq:optcontrol}. Since the Hamiltonian does not contain interaction term between socialization and vaccination, we similarly consider $[0,1] \ni \nu \mapsto H^{k}(t,e,z,\alpha,\nu,u)$ and apply first-order optimality condition to achieve the equilibrium vaccination rates in~\eqref{eq:optcontrol}, whose indicator form follows from the linear dependence of the running cost on vaccination. By the dynamic programming principle of optimal control, the HJB equations can be written as
    \begin{equation}
        \label{eq:hjb}
        \dot{u}^{k}_{t}(e) + H^{k}(t,e,\Delta_e u_t(\cdot),\hat{\alpha}_t(e),\hat{\nu}_t) = 0, \quad u^{k}_T(e)= 0, \quad e \in E,
    \end{equation}
    where $\Delta_e u_t(e^{\prime})=u_t(e^{\prime})-u_t(e)$. We remark that HJB is originally a partial differential equation (PDE) system; since in our model the state space is finite, it reduces to a finite-dimensional ODE system. We also argue that the population state distribution flow $({p}_t)_{t\in[0,T]}$ solves the KFP equation
    \begin{equation}
        \label{eq:kfp}
        \dot{p}^{k}_t(e)=\sum_{e^{\prime} \in E} q_t^{k}(e^{\prime}, e, \hat{\alpha}^k_{t}( e^{\prime}),\hat{\nu}^k_t;Z_t^k) p^{k}_t(e^{\prime}), \quad p^{k}_0(e) = \pi^{k}_{0}(e), \quad e \in E.
    \end{equation}
    At equilibrium, the KFP equations are coupled with the HJB equations via controls: the equilibrium controls depend on the value function $u$ and on the mean-field interaction term $Z$, and $Z$ itself depends on the state density $p$. We thereby obtain the FBODE system stated in~\eqref{eq:fbode}.
\end{proof}

The above result states that once the FBODE system is solved, the equilibrium vaccination rates, socialization levels, and the disease dynamics at the MFNE can be determined. To further explore the MFNE collective behavior, we state some regularity conditions to ensure a non-trivial epidemics problem.
\begin{assumption}[Model Regularity]
    \label{assu:model_regularity}
    Out of model formulation, we assume that for each group $k \in [K]$, there exists some group $l(k) \in [K]$ such that
    $$\pi_0^{\ell(k)}(\mathrm{I})>0, \quad w(k,l(k)) > 0.$$
\end{assumption}
We remark that a stronger but simpler assumption is to pose positive initial infection proportion and strictly positive within-group connection strength for all groups, i.e. $\pi_0^k(\rmI) > 0$ and $w(k,k) > 0$ for any $k \in [K]$. Under this regularity assumption, we claim that the aggregate variable remains strictly positive on $[0,T]$.

\begin{lemma}
    \label{lemma:pos_Z}
    Under Assumption~\ref{assu:model_regularity}, along any solution of the FBODE, the aggregate variable is strictly positive, i.e. $Z_t^k > 0$ for all $k \in [K]$ and $t \in [0,T]$.
\end{lemma}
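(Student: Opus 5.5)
The plan is to show that the infected proportions $p_t^l(\rmI)$ stay strictly positive on $[0,T]$ for every group $l$ with $\pi_0^l(\rmI)>0$. Positivity of $Z_t^k$ then follows from its definition.

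\textbf{Lower bound on $p^l(\rmI)$.} From the FBODE \eqref{eq:fbode}, along any solution we have
$$\dot p_t^l(\rmI) = \beta^l \hat\alpha_t^l(\rmS) Z_t^l p_t^l(\rmS) - \gamma^l p_t^l(\rmI).$$
The first step is to check that the inflow term is nonnegative. Here $\hat\alpha^l_t(\rmS)\in[0,1]$, since it is the minimizer over $[0,1]$, so the formula in \eqref{eq:optcontrol} is understood as projected onto $[0,1]$. Also $Z_t^l\ge 0$, since all its factors are nonnegative. Finally $p_t^l(\rmS)\ge 0$, because the $p(\rmS)$ equation is linear in $p(\rmS)$ with a bounded measurable coefficient, so
$$p_t^l(\rmS)=\pi_0^l(\rmS)\exp\Big(-\int_0^t(\beta^l\hat\alpha_s^l(\rmS)Z_s^l+\kappa^l\hat\nu_s^l)\,ds\Big)\ge 0.$$
Nonnegativity of $p(\rmI)$, which enters $Z$, is obtained simultaneously by the same argument. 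The cleanest way is to note that the KFP equation \eqref{eq:kfp} describes the law of a CTMC with nonnegative rates, so $p_t^l\in\Delta_E$ for all $t$.

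Given nonnegative inflow, the integrating factor gives
$$\frac{d}{dt}\big(e^{\gamma^l t}p_t^l(\rmI)\big)\ge 0.$$
This holds almost everywhere, which is enough because $p$ is absolutely continuous in the Carathéodory sense. Hence
$$p_t^l(\rmI)\ge \pi_0^l(\rmI)e^{-\gamma^l t}\ge \pi_0^l(\rmI)e^{-\gamma^l T}>0.$$

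\textbf{Positivity of $Z$.} Fix $k$ and take $l=l(k)$ from Assumption~\ref{assu:model_regularity}. Every summand in $Z_t^k$ is nonnegative, so
$$Z_t^k\ge w(k,l)\,\bar{\hat\alpha}_t^l(\rmI)\,p_t^l(\rmI)\,m^l.$$
By \eqref{eq:optcontrol}, $\hat\alpha_t^l(\rmI)=\lambda_t^{l,\rmI}>0$, because full lockdown is excluded. Its expectation is therefore this same deterministic positive value. Together with $w(k,l)>0$, $m^l>0$, and the bound on $p^l(\rmI)$ from the previous step, we get $Z_t^k>0$ for all $t\in[0,T]$. If a uniform bound is wanted, note that $\lambda^{l,\rmI}$ is continuous on the compact interval $[0,T]$, so $\min_t\lambda_t^{l,\rmI}>0$ and $Z^k$ is bounded below uniformly.

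\textbf{Main obstacle.} The only delicate point is making the sign arguments rigorous without assuming a priori that the solution stays in the simplex. The issue is the circularity: $Z$ depends on $p(\rmI)$, and the inflow into $p(\rmI)$ depends on $Z$. I would handle this by first proving invariance of $\Delta_E^K$, either through the CTMC interpretation of \eqref{eq:kfp} or by a first-exit-time argument. In the latter, suppose $\tau$ is the first time some component reaches zero or below. On $[0,\tau]$ all coefficients are nonnegative, and the exponential representations above then show that no component can actually become negative.
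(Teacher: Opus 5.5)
Your proposal is correct and follows essentially the paper's argument: the variation-of-constants (integrating-factor) formula for $p^{l(k)}(\rmI)$ with nonnegative inflow gives $p_t^{l(k)}(\rmI)\ge e^{-\gamma^{l(k)}t}\pi_0^{l(k)}(\rmI)>0$, and $Z_t^k$ is then bounded below by its $l(k)$ summand, using $\hat\alpha_t^{l(k)}(\rmI)=\lambda_t^{l(k),\rmI}>0$, $w(k,l(k))>0$ and $m^{l(k)}>0$. Your extra care about the sign of the inflow (projecting $\hat\alpha(\rmS)$ onto $[0,1]$ and breaking the $Z$--$p(\rmI)$ circularity) fills in what the paper only asserts; note, though, that your exit-time argument degenerates when some components start at zero, and it is cleaner to observe that, along the given solution, the $p(\rmI)$-equations form a linear system whose coefficient matrix has nonnegative off-diagonal entries, so the system preserves the nonnegative orthant.
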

\begin{proof}
    Fix $k \in [K]$ and consider $l(k)$ satisfying Assumption~\ref{assu:model_regularity}. By variation of constants,
$$
p_t^{l(k)}(\mathrm I)
=
e^{-\gamma^{l(k)} t}\pi_0^{l(k)}(\mathrm I)
+
\int_0^t e^{-\gamma^{l(k)} (t-s)}
\beta^{l(k)} \alpha_s^{l(k)}(\rmI) p_s^{l(k)}(\mathrm S) Z_s^{l(k)}\,ds .
$$
Since all terms on the right-hand side are nonnegative and
$e^{-\gamma^{l(k)} t}\pi_0^{l(k)}(\mathrm I)>0$, we obtain $p_t^{l(k)}(\mathrm I)>0, \ \forall t\in[0,T]$.
Using the representation of $Z_t^k$ and the nonnegativity of the coefficients, it follows
$
Z_t^k=\sum_{l\in [K]} w(k,l) \lambda_t^{l,\rmI} p_t^{l}(\mathrm I) m^l
\geq
w(k,l(k)) \lambda_t^{l(k),\rmI} p_t^{l(k)}(\mathrm I) m^{l(k)}>0$.
\end{proof}

\begin{theorem}
    \label{thm:charac_bang_bang}
    Under Assumption~\ref{assu:model_regularity}, the equilibrium vaccination rates at the MFNE follow a bang-bang strategy with at most one jump: $$\hat{\nu}^k_t= \begin{cases}1 & \text { if } t<t_1^k, \\ 0 & \text { if } t \geq t_1^k,\end{cases}$$ 
    where the jump time $t_1^k$ satisfies $u_{t_1^k}^k(\mathrm{S}) = \frac{c_{\nu}^{k}}{\kappa^{k}}$. In particular, if $u_{0}^k(\mathrm{S}) < \frac{c_{\nu}^{k}}{\kappa^{k}}$, then $t_1^k = 0$, i.e. there is no jump and $\hat{\nu}_t^k = 0$ for all $t \in [0,T]$.
\end{theorem}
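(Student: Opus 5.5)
Since $\hat{\nu}_t^k = \mathds{1}\{\kappa^k u_t^k(\mathrm S) > c_\nu^k\}$ by Theorem~\ref{thm:full_characterization}, the plan is to show that $t\mapsto u_t^k(\mathrm S)$ is strictly decreasing on $[0,T]$. Once this holds, the set $\{t: u_t^k(\mathrm S) > c_\nu^k/\kappa^k\}$ is an interval of the form $[0,t_1^k)$. Since $u_T^k(\mathrm S)=0<c_\nu^k/\kappa^k$ and $u^k(\mathrm S)$ is Lipschitz in time (by the estimate preceding Theorem~\ref{thm:full_characterization}), the intermediate value theorem gives a unique $t_1^k$ with $u^k_{t_1^k}(\mathrm S)=c_\nu^k/\kappa^k$ whenever $u_0^k(\mathrm S)\ge c_\nu^k/\kappa^k$. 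If instead $u_0^k(\mathrm S)<c_\nu^k/\kappa^k$, monotonicity gives $\hat\nu^k\equiv 0$, and we set $t_1^k=0$.

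\textbf{Step 1: the I and R components.} We have $u^k_t(\mathrm R)\equiv 0$. The equation for $u^k(\mathrm I)$ is linear and solves explicitly to
\[
u_t^k(\mathrm I)=\frac{c_{\rmI}^k}{\gamma^k}\bigl(1-e^{-\gamma^k(T-t)}\bigr).
\]
This is positive and strictly decreasing on $[0,T)$.

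\textbf{Step 2: sign of $\dot u^k_t(\mathrm S)$.} Substitute the optimal $\hat\alpha^k_t(\mathrm S)$ into the HJB. Write $D_t:=u_t^k(\mathrm S)-u_t^k(\mathrm I)$. Minimising the Hamiltonian in $\alpha$ gives
\[
\dot u^k_t(\mathrm S) = -\min_{\alpha\in[0,1],\,\nu\in[0,1]}\Bigl\{\beta^k\alpha Z_t^k(u^k_t(\mathrm I)-u^k_t(\mathrm S))+\kappa^k\nu(0-u^k_t(\mathrm S))+c_\lambda^k(\lambda_t^{k,\mathrm S}-\alpha)^2+c_\nu^k\nu\Bigr\}.
\]
The plan is to show that the quantity inside the minimum is strictly positive, so that $\dot u_t^k(\mathrm S)<0$. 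The $\nu$-part contributes $\min(0,\,c_\nu^k-\kappa^k u_t^k(\mathrm S))\le 0$, so this requires care.

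The cleaner route is a probabilistic/comparison argument rather than a sign computation on the raw ODE. Fix $t<t'$. Take any admissible control on $[t,T]$ and shift it in time to start at $t'$, truncating at $T$. Using this, together with Lemma~\ref{lemma:pos_Z} and the fact that costs are nonnegative with a strictly positive running cost $c_\lambda^k(\lambda^{k,\mathrm S}-\alpha)^2+c_\nu^k\nu$ that cannot vanish simultaneously with the leaving rates, would show that the remaining cost over a shorter horizon is smaller. That comparison, however, is obstructed by the time dependence of $\lambda$ and $Z$.

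So instead I will work directly with the ODE via a first-crossing argument. Note that $u_t^k(\mathrm S)\le u_t^k(\mathrm I)$ is expected: a susceptible agent can mimic infection-level costs, since she can always choose $\nu=0$ and $\alpha=\lambda^{k,\mathrm S}$ and pay at most the cost of a future infection. I will prove $D_t\le 0$ first, by a backward-in-time comparison from $D_T=0$. With this in hand, the minimiser satisfies $\hat\alpha\ge\lambda^{k,\mathrm S}$.

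The key quantity is the time derivative of $g_t:=\kappa^k u_t^k(\mathrm S)-c_\nu^k$. At any time $s$ where $g_s=0$, we have $\hat\nu$-indifference. There
\[
\dot u_s^k(\mathrm S)=\beta^k\hat\alpha Z D_s - c_\lambda^k(\lambda-\hat\alpha)^2 = -\frac{(\beta^k Z_s^k D_s)^2}{4c_\lambda^k}<0,
\]
using $D_s<0$ (strict because $u^k_s(\mathrm S)=c_\nu^k/\kappa^k$ versus $u^k(\mathrm I)$). When the box constraint $\hat\alpha\le1$ binds, an analogous computation gives negativity via $\beta^kZ_s^k>0$ from Lemma~\ref{lemma:pos_Z}. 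Hence $g$ crosses zero only downward, so it can cross at most once. This yields the at-most-one-jump structure without full monotonicity, and it uses only the time continuity of $u$ emphasised before Theorem~\ref{thm:full_characterization}.

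\textbf{Main obstacle.} The main obstacle is establishing the strict inequality $D_s<0$ at crossing times, i.e.\ that $u^k(\mathrm S)<u^k(\mathrm I)$ strictly there. This is what excludes tangential touching and hence multiple jumps. My first attempt is a Grönwall-type argument on $D$. The function $D$ satisfies a linear ODE with bounded coefficients plus a forcing term $c^k_{\rmI}$ coming from $\dot u^k(\mathrm I)$, together with nonpositive terms. Integrating backward from $D_T=0$ should give $D_t\le -\int_t^T(\cdots)<0$ for $t<T$. Crossings at $t=T$ are impossible since $g_T=-c_\nu^k<0$.

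Because $\hat\nu^k$ is discontinuous, all these ODE manipulations are to be read in the Carathéodory sense, i.e.\ almost everywhere. The one-sided crossing argument should therefore be phrased with one-sided derivatives, using continuity of $u$ and of $Z$.
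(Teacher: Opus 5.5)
Your architecture matches the paper's. You compute $u^k(\rmI)$ explicitly, prove $u^k_t(\rmS)<u^k_t(\rmI)$ on $[0,T)$, and note that the vaccination term $\hat\nu^k_t(\kappa^k u^k_t(\rmS)-c^k_\nu)$ vanishes at the threshold, so every crossing is strictly downward and there is at most one. Your integrating-factor route to $D<0$ is a valid substitute for the paper's last-touching-time contradiction, and slightly more robust, since it only uses the ODE a.e. However, the crossing computation, which is the heart of the proof, is wrong as written. With $D_s\le 0$, the minimiser of $c^k_\lambda(\lambda^{k,\rmS}_s-\alpha)^2-\beta^k\alpha Z^k_sD_s$ is $\hat\alpha=\lambda^{k,\rmS}_s+\beta^kZ^k_sD_s/(2c^k_\lambda)\le\lambda^{k,\rmS}_s$, not $\ge$. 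So the constraint that can bind is $\alpha\ge 0$, not $\alpha\le 1$. Substituting the interior minimiser gives
\[
\dot u^k_s(\rmS)=\beta^k\lambda^{k,\rmS}_sZ^k_sD_s+\frac{(\beta^kZ^k_sD_s)^2}{4c^k_\lambda},
\]
not $-(\beta^kZ^k_sD_s)^2/(4c^k_\lambda)$. This is a negative term plus a positive one. If the constraint were ignored, it would be positive once $\beta^kZ^k_s|D_s|>4c^k_\lambda\lambda^{k,\rmS}_s$. The conclusion $\dot u^k_s(\rmS)<0$ is still true, but get it as the paper does, without computing the minimiser. At the threshold, $-\dot u^k_s(\rmS)=\min_{\alpha\in[0,1]}\{c^k_\lambda(\lambda^{k,\rmS}_s-\alpha)^2+\beta^k\alpha Z^k_s(u^k_s(\rmI)-u^k_s(\rmS))\}$. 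Both terms are nonnegative, and they cannot vanish together because $\lambda^{k,\rmS}_s>0$ and $\beta^kZ^k_s(u^k_s(\rmI)-u^k_s(\rmS))>0$ (by Lemma~\ref{lemma:pos_Z} and $D_s<0$). The minimum over the compact set $[0,1]$ is attained, hence strictly positive.

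Second, your backward integration for $D$ has the sign backwards. In $\dot D=\dot u^k(\rmS)-\dot u^k(\rmI)$, the extra term $-c^k_\lambda(\lambda^{k,\rmS}-\hat\alpha)^2$ is nonpositive. Since $D_t=-\int_t^T\dot D_s\,ds$, a nonpositive term in $\dot D$ pushes $D_t$ up, so discarding it cannot yield $D_t\le-\int_t^T(\cdots)$; you need a \emph{lower} bound on $\dot D$. That bound is the suboptimal-control idea you mention in passing, i.e. the paper's relaxed bound \eqref{eq:boundus}. Evaluating the infimum at $\alpha=\lambda^{k,\rmS}_t$, $\nu=0$ gives $\dot u^k_t(\rmS)\ge\beta^k\lambda^{k,\rmS}_tZ^k_tD_t$. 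Combined with $-\dot u^k_t(\rmI)=c^k_{\rmI}e^{-\gamma^k(T-t)}$, this yields
\[
\dot D_t\ge\beta^k\lambda^{k,\rmS}_tZ^k_tD_t+c^k_{\rmI}e^{-\gamma^k(T-t)}\quad\text{for a.e. }t.
\]
Hence $t\mapsto D_t\exp(-\int_0^t\beta^k\lambda^{k,\rmS}_sZ^k_s\,ds)$ is strictly increasing and vanishes at $T$, giving $D_t<0$ on $[0,T)$. With these two repairs your argument is complete and agrees in substance with the paper's.
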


\begin{proof}
    For the sake of simplification in presentation, we present the proof in one-population setting, i.e., $K=1$. The technical results extend to the general case in straightforward manner. Under our model setting, we first note $u_t(\rmR) = 0$ for any $t \in [0,T]$, since recovered individuals follow their guidelines. Minimizing the HJB equation gives the dynamics of value functions for infectious $\dot{u}_t(\mathrm{I})=\gamma u_t(\mathrm{I})-c_{\rmI}$ with $u_T(\rmI)=0$.  This has the following explicit solution \begin{equation}
        \label{eq:solnI}
        u_{t}(\rmI) = \frac{c_{\rmI}}{\gamma}(1-e^{-\gamma (T-t)}), \quad t \in [0,T].
    \end{equation} 
    We notice $u_t(\rmI)$ is monotonically decreasing on $[0,T]$ with $u_T(\rmI) = 0$ at the terminal time. On the other hand, if we write back the dynamics of $u_t(\rmS)$ as in the Hamiltonian minimization form of HJB, it follows
    \begin{equation}
        \label{eq:hjbus}
        -\dot{u}_t(\rmS) = \inf_{\alpha \in [0,1], \nu \in [0,1]} \left\{ c_{\lambda}(\lambda^{\rmS}_{t} - \alpha)^2 + c_{\nu} \nu + \beta \alpha Z_{t}(u_t(\rmI) - u_t(\rmS)) + 
    \kappa \nu (u_t(\rmR) - u_t(\rmS))\right\}
    \end{equation}
     By the property of infimum, we plug in admissible controls (not necessarily minimizers) such that $\alpha = \lambda_t$ and $\nu = 0$ for arbitrary $t \in [0,T]$ and achieve a relaxed bound \begin{equation}
        \label{eq:boundus}
        -\dot{u}_{t}(\rmS) \leq \beta \lambda^{\rmS}_t Z_t (u_t(\rmI) - u_t(\rmS))
    \end{equation}
    Naturally $-\dot{u}_{T}(\rmS) \leq 0$ since $u_T(\rmI) = u_T(\rmS) = 0$. Also from~\eqref{eq:solnI}, $-\dot{u}_T(\rmI) = c_{\rmI} > 0$. Therefore, it follows \begin{equation}
        \label{eq:dom_near_T}
        u_{T-}(\rmS) < u_{T-}(\rmI)
    \end{equation} By continuity of value functions, we will prove $u_t(\rmS) < u_t(\rmI)$ for all $t \in [0,T)$ by using proof  by contradiction.
    Assume $u_t(\rmS) < u_t(\rmI)$ does not hold for some $t \in [0,T)$ and let $\tau \in [0,T)$ be the largest time point at which $u_t(\rmS) = u_t(\rmI)$. Recall that $u_t(\rmI)$ is decreasing on $[\tau, T)$. So by \eqref{eq:dom_near_T} and the continuity of value functions, considering the backward direction from $T$, for $u_t(\rmS)$ to hit $u_{t}(\rmI)$ at time $\tau$, there must have $\dot{u}_{\tau}(\rmS) \leq \dot{u}_{\tau}(\rmI) < 0$. This contradicts with $-\dot{u}_{\tau}(\rmS) \leq 0$ from \eqref{eq:hjbus} since at $\tau$, $u_\tau(\rmS)=u_\tau(\rmI)$.

    Since there is no interaction term of the two controls (socialization level and vaccination rate of susceptible individuals) in the Hamiltonian~\eqref{eq:hjbus}, it is immediate that $\hat{\nu}_{t} = 1$ if $c_{\nu} < \kappa u_t(\rmS)$ and $\hat{\nu}_{t} = 0$ if $c_{\nu} > \kappa u_t(\rmS)$. It is left to show that there is at most one jump, i.e., $u_t(\mathrm{S})$ cross $c_v/\kappa$ at most once on $[0,T]$. Let $t_1$ be the first time $u_t(\mathrm{S}) = c_v/\kappa$, at which we choose $\hat{\nu}_{t_1} = 0$ by convention. Then by \eqref{eq:hjbus}, 
    \begin{equation}
        \label{eq:neg_slope_crossing}
        -\dot{u}_{t_1}(\rmS) = \inf_{\alpha \in [0,1]} \left\{ c_{\lambda}(\lambda^{\rmS}_{t_1} - \alpha_{t_1})^2 + \beta \alpha_{t_1} Z_{t_1}(u_{t_1}(\rmI) - u_{t_1}(\rmS)) + 0 \right\} > 0
    \end{equation}
    The equality holds by definition of $t_1$, and the (strictly positive) inequality holds since $u_{t_1}(\rmS) < u_{t_1}(\rmI)$ and also by applying the Lemma~\ref{lemma:pos_Z}.
    Therefore, by continuity of value functions, we conclude that $u_t(\rmS)$ is strictly dominated by $c_{\nu}/\kappa$ on $[t_1,T]$. For the sake of contradiction, suppose that there exists another crossing time $t_2 \in (t_1,T)$ such that $u_{t_2}(\rmS)=c_{\nu}/\kappa$. By~\eqref{eq:neg_slope_crossing}, we similarly obtain $\dot{u}_{t_2}(\rmS)<0$. On the other hand, since $u_t(\rmS)$ lies strictly below $c_{\nu}/\kappa$ immediately after $t_1$ due to $\dot{u}_{t_1}(\rmS)<0$, the function $u_t(\rmS)$ must reach the threshold $c_{\nu}/\kappa$ from below at time $t_2$, which requires $\dot{u}_{t_2}(\rmS)\geq 0$. This is a contradiction and thereby completes the proof. 
\end{proof}

The above result states that at MFNE, individuals vaccinate until a specific time determined by the cost and the efficacy of the vaccination and they stop vaccinating. Furthermore, it says that if the cost of vaccination is relatively higher then the vaccination efficacy, there is no vaccination.

The next result establishes the existence of the solution the FBODE system. Because the equilibrium vaccination rate is of indicator jump form, the commonly used contraction arguments based on Banach’s fixed point theorem are not directly applicable, unlike in settings with continuous equilibrium controls (see e.g.,~\cite{liu2025incorporatingauthorityperceptioneconomic, multi_major_minor}). Our approach is therefore to fix the jump times a priori and to study the resulting reduced system; we then recover the jump times via a consistency condition to obtain a solution of the original FBODE. For clarity, we refer to the reduced system with the exogenously fixed jump times as the \emph{sub-FBODE} and to the original system as the \emph{full-FBODE}. Before presenting the proof of existence, we state and discuss several technical lemmas that underpin this construction.

\begin{lemma}[Well-posedness of sub-FBODE]
\label{lemma:sub_wellposedness}
Fix a vaccination jump-time vector $t_1 = (t_1^k)_{k \in [K]} \in[0,T]^K$ and let $\tilde\nu_t^k=\mathds{1}_{\{t<t_1^k\}}$. For sufficiently small $T>0$, the corresponding sub-FBODE system admits a unique Carath\'eodory solution $(u^{t_1},p^{t_1})$ on $[0,T]$.
\end{lemma}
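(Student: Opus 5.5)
With the jump times $t_1$ fixed, $\tilde\nu^k_t=\mathds{1}_{\{t<t_1^k\}}$ is a known, bounded, piecewise-constant function of time. It no longer depends on $u$, so the only remaining coupling between the backward and forward parts is through $\hat\alpha^k_t(\rmS)$ and $Z^k_t$. I will cast the sub-FBODE as a fixed point of a map $\Phi$ on a complete metric space and use Banach's fixed point theorem for small $T$. The discontinuity of $\tilde\nu$ is harmless because solutions are understood in the Carath\'eodory sense. The right-hand sides are measurable in $t$ (continuous away from the finitely many $t_1^k$), so every ODE is solved by integration and every solution is absolutely continuous.

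\textbf{Step 1: the space and the map.} Let $\mathcal X$ be the set of $p=(p^k)_{k\in[K]}\in C([0,T];\Delta_E)^K$ with the sup norm. Given $p\in\mathcal X$, I define $Z^k_t=\sum_l w(k,l)\lambda^{l,\rmI}_t p^l_t(\rmI)m^l$, which is continuous and lies in $[0,1]$. Then:
\begin{itemize}
\item I solve the backward equations for this fixed $Z$. $u(\rmR)\equiv0$, and $u(\rmI)$ is explicit as in \eqref{eq:solnI}. The equation for $u(\rmS)$ is the scalar backward Carath\'eodory ODE
\[
\dot u^k_t(\rmS)=F^k\bigl(t,u^k_t(\rmS)\bigr),
\]
obtained by substituting $\hat\alpha^k_t(\rmS)$ from \eqref{eq:optcontrol}. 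The nonlinearity in $u(\rmS)$ is the quadratic term $-\frac{(\beta^k Z^k_t)^2}{4c^k_\lambda}(u_t(\rmS)-u_t(\rmI))^2$.
\item I use the a priori bound $u\in[0,TC_f]$ from the value-function interpretation, or obtain it directly by comparison, and truncate the quadratic on that interval. This gives a globally Lipschitz, measurable-in-$t$ right-hand side, so Carath\'eodory's existence and uniqueness theorem yields a unique absolutely continuous $u$.
\item I then solve the forward KFP system \eqref{eq:kfp} with $\hat\alpha(\rmS)$ built from $u$ and $Z$, and $\tilde\nu$ given. It is linear in the new unknown $\tilde p$ with bounded measurable coefficients, hence uniquely solvable, and it preserves $\Delta_E$.
\end{itemize}
I set $\Phi(p)=\tilde p$. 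If the truncation produces $\hat\alpha\notin[0,1]$, I project onto $[0,1]$; this projection is 1-Lipschitz.

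\textbf{Step 2: contraction estimates.} Take $p,p'\in\mathcal X$. First, $\|Z-Z'\|_\infty\le C\|p-p'\|_\infty$. Grönwall on the backward equation then gives $\|u-u'\|_\infty\le C T e^{CT}\|Z-Z'\|_\infty$. This is where I use that the right-hand side is Lipschitz in $(u,Z)$ uniformly in $t$; $\tilde\nu$ enters identically in both systems and cancels. Hence $\|\hat\alpha(\rmS)-\hat\alpha'(\rmS)\|_\infty\le C\|p-p'\|_\infty$. Grönwall on the forward equation gives
\[
\|\tilde p-\tilde p'\|_\infty\le CTe^{CT}\bigl(\|\hat\alpha-\hat\alpha'\|_\infty+\|Z-Z'\|_\infty\bigr),
\]
so $\|\Phi(p)-\Phi(p')\|_\infty\le C_1Te^{C_1T}\|p-p'\|_\infty$.

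\textbf{Step 3: conclusion.} The constant $C_1$ depends only on the model parameters and the bound $TC_f$, not on $t_1$. For $T$ small enough that $C_1Te^{C_1T}<1$, $\Phi$ is a contraction and has a unique fixed point, which gives the unique Carath\'eodory solution $(u^{t_1},p^{t_1})$.

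The main obstacle is Step 2: obtaining Lipschitz constants uniform in $t_1$ and in the data despite the quadratic nonlinearity in $u(\rmS)$. My first attempt handles this with the invariant bound $0\le u\le TC_f$, checked directly from the ODE by comparison, and the truncation argument above. I also need that the truncated solution coincides with the true one, which follows because it stays inside the bound.
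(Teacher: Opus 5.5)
Your proposal is correct and follows essentially the same route as the paper: a Banach contraction for the map $p\mapsto u\mapsto\tilde p$ on $C([0,T];\Delta_E^K)$ with the sup norm. Gr\"{o}nwall estimates on the backward and forward equations, in which the $\tilde\nu$ terms cancel in differences, give a Lipschitz constant of order $T$, so the map contracts for small $T$. Your truncation/projection step, the linear KFP with $Z$ frozen from the input, and the remark that the constants are uniform in $t_1$ fill in details the paper skips (it simply assumes $|\hat\alpha|\le 1$ and the bound $\bar U$). One small slip: in $\dot u_t(\rmS)$ the quadratic term enters with a plus sign, $+\frac{(\beta^k Z_t^k)^2}{4c_\lambda^k}(u_t(\rmS)-u_t(\rmI))^2$, which does not affect your argument.
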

The proof of Lemma~\ref{lemma:sub_wellposedness} can be found in Appendix~\ref{sec:appendix_sub_wellposedness}.
Lemma~\ref{lemma:sub_wellposedness} states that for each prescribed jump-time vector $t_1\in[0,T]^K$, the corresponding sub-FBODE system is well-posed and admits a unique bounded solution $(u^{t_1},p^{t_1})$. We next establish the continuous dependence of the induced susceptible value function, $u^{t_1}(\rmS)$, on $t_1$ (in a sense of uniform norm), a property that will be used to verify continuity of the jump-time update map in the subsequent fixed-point argument.

\begin{lemma}[Continuity of $u^{t_1}$ with respect to $t_1$]
\label{lemma:sub_continuity}
Let $t_1\in[0,T]^K$ be fixed and let $(u^{t_1},p^{t_1})$ be the unique solution of the corresponding sub-FBODE. Then the map
$$
[0,T]^K\ni t_1 \longmapsto u^{t_1}(\rmS)\in C([0,T];\mathbb{R}^K)
$$
is continuous with respect to the uniform norm.
\end{lemma}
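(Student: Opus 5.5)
We prove continuity by a stability estimate for the sub-FBODE. Fix $t_1, s_1 \in [0,T]^K$, and let $(u,p) := (u^{t_1},p^{t_1})$ and $(v,q) := (u^{s_1},p^{s_1})$ be the unique Carath\'eodory solutions from Lemma~\ref{lemma:sub_wellposedness}, on the same small horizon $T$. The only place where $t_1$ enters the sub-FBODE is through the vaccination indicators $\tilde\nu^k_t = \mathds{1}_{\{t<t_1^k\}}$ and $\mathds{1}_{\{t<s_1^k\}}$. These differ only on the interval between $t_1^k$ and $s_1^k$, so
\[
\int_0^T \big|\mathds{1}_{\{t<t_1^k\}}-\mathds{1}_{\{t<s_1^k\}}\big|\,dt = |t_1^k - s_1^k|.
\]
The strategy is to regard this $L^1$-small perturbation as a forcing term in the integral form of the forward-backward system and close a Gr\"onwall/contraction estimate in the sup norm.

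\textbf{Step 1 (integral form and boundedness).} Write each component in integral form: the $p$-equations forward from $0$, and the $u$-equations backward from $T$. We use the a priori bounds available for any such solution: $u^k_t(e)\in[0,TC_f]$, $p^k_t\in\Delta_E$, and $Z^k_t\in[0,1]$. Note also that $u(\mathrm I)$ and $u(\mathrm R)$ are given explicitly by \eqref{eq:solnI} and by $0$, so they do not depend on $t_1$ at all. Consequently $\hat\alpha^k(\mathrm S)$ is a Lipschitz function of $(u^k(\mathrm S), Z^k)$ on this bounded set. We take it with the projection onto $[0,1]$ if needed, which is itself $1$-Lipschitz. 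The map $p\mapsto Z$ is linear, since $\hat\alpha(\mathrm I)=\lambda^{\mathrm I}$. Hence all right-hand sides are Lipschitz in $(u,p)$, with a constant $L$ depending only on the model parameters, and affine in $\tilde\nu$ with bounded coefficients.

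\textbf{Step 2 (difference estimate).} Set $\delta_u := \|u(\mathrm S)-v(\mathrm S)\|_\infty$ and $\delta_p := \|p-q\|_\infty$. Subtracting the two integral forms and splitting each vaccination term as
\[
\kappa^k\tilde\nu^{t_1}(u-u(\mathrm R)) - \kappa^k\tilde\nu^{s_1}(v-v(\mathrm R)) = \kappa^k\tilde\nu^{t_1}\big[(u-u(\mathrm R))-(v-v(\mathrm R))\big] + \kappa^k(\tilde\nu^{t_1}-\tilde\nu^{s_1})(v-v(\mathrm R)),
\]
and similarly for the $c_\nu^k\tilde\nu$ term and the $\kappa^k\tilde\nu\, p(\mathrm S)$ terms, we obtain
\[
\delta_u \le LT(\delta_u+\delta_p) + C\,|t_1-s_1|_1, \qquad \delta_p \le LT(\delta_u+\delta_p) + C\,|t_1-s_1|_1 .
\]
The constant $C$ depends only on $\kappa^k$, $c_\nu^k$, $TC_f$, and the bound $1$ on $p$.

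\textbf{Step 3 (closing).} For $T$ small enough that $2LT<1$, which is consistent with the smallness already assumed in Lemma~\ref{lemma:sub_wellposedness} (one may take the smaller of the two thresholds), summing the two inequalities gives
\[
\delta_u+\delta_p \le \frac{2C}{1-2LT}\,|t_1-s_1|_1 .
\]
Thus the map $t_1\mapsto u^{t_1}(\mathrm S)$ is in fact Lipschitz into $C([0,T];\mathbb R^K)$, which is stronger than the continuity claimed.

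\textbf{Main obstacle.} The delicate point is the discontinuity in time of $\tilde\nu$. A pointwise Lipschitz bound in $t_1$ fails, so the estimate must be carried out at the level of the integral equations, using only the $L^1$ smallness of $\tilde\nu^{t_1}-\tilde\nu^{s_1}$. Two things must be checked here. First, the Carath\'eodory solutions genuinely satisfy the integral identities; this holds because they are absolutely continuous and the ODE holds a.e. Second, the Lipschitz constant $L$ must be uniform in $t_1$; this holds because $\tilde\nu\in[0,1]$ enters only as a bounded multiplier.

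If the smallness of $T$ from Lemma~\ref{lemma:sub_wellposedness} does not directly yield $2LT<1$, the fallback is a weighted or exponential sup norm. An alternative fallback is the contraction constant from the Banach fixed-point proof of Lemma~\ref{lemma:sub_wellposedness} together with the standard ``parameter-dependent contraction'' argument: the fixed point of a uniform contraction $\Phi_{t_1}$ satisfies
\[
\|x^{t_1}-x^{s_1}\|\le (1-\rho)^{-1}\,\|\Phi_{t_1}(x^{s_1})-\Phi_{s_1}(x^{s_1})\|,
\]
and the right-hand side is bounded by $C|t_1-s_1|_1$ by the same $L^1$ computation.
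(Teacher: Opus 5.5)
Your proposal is correct and follows essentially the same route as the paper. Both arguments treat the indicator mismatch, which is supported on an interval of length $|t_1^k-s_1^k|$, as an $L^1$-small forcing term in coupled forward (for $p$) and backward (for $u(\rmS)$) difference estimates, and both close the loop with an additional short-horizon condition. The difference is technical: the paper differentiates $\|\Delta p_t\|^2$ and $\|\Delta u_t\|^2$, applies Young's and Gr\"onwall's inequalities, and ends with $\sup_t\|\Delta u_t\|^2\le C\sum_k|t_1^{n,k}-t_1^k|$, which is only a H\"older-$\tfrac12$ modulus. Your direct sup-norm estimate on the integral equations avoids Gr\"onwall and yields the sharper Lipschitz bound.
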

The proof of Lemma~\ref{lemma:sub_continuity} can be found in Appendix~\ref{sec:appendix_sub_continuity}. With the continuous dependence result established, we can proceed to reconstruct the equilibrium vaccination control via the threshold rule. A remaining technical point is that, for the reduced system with prescribed jump times, the susceptible value function does not necessarily satisfy the same structural properties as in the full-FBODE; in particular, the one-jump characterization proved in Theorem~\ref{thm:charac_bang_bang} does not follow for free. Since at-most-once threshold-crossing time is needed to define the jump-time update map, we next provide a condition under which the sub-FBODE value function still generates a vaccination control with at most one jump.

\begin{lemma}[At-most one-crossing property for the sub-FBODE]
\label{lemma:sub_one_jump}
Fix $t_1\in[0,T]^K$ and let $u^{t_1}$ be the corresponding sub-FBODE solution. If $c_\nu^k<c_{\rmI}^k$ for all $k\in[K]$, then for each $k$ the function $t\mapsto u_t^{t_1,k}(\rmS)$ meets the threshold $c_\nu^k/\kappa^k$ at most once on $[0,T]$ under short time condition.
\end{lemma}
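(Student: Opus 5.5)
In the sub-FBODE the vaccination control $\tilde\nu_t^k=\mathds{1}_{\{t<t_1^k\}}$ is prescribed rather than chosen by the threshold rule. So the HJB for $u^{t_1,k}(\rmS)$ is no longer an infimum over $\nu$; it is the Hamiltonian evaluated at $\tilde\nu$:
\begin{equation*}
-\dot u_t^{k}(\rmS)=c_\lambda^k(\lambda_t^{k,\rmS}-\hat\alpha_t^k)^2+\beta^k\hat\alpha_t^k Z_t^k\big(u_t^k(\rmI)-u_t^k(\rmS)\big)+\tilde\nu_t^k\big(c_\nu^k-\kappa^k u_t^k(\rmS)\big),
\end{equation*}
holding a.e. As before, $u^k(\rmR)\equiv 0$ and $u^k(\rmI)$ is given explicitly by \eqref{eq:solnI}. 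The argument of Theorem~\ref{thm:charac_bang_bang} breaks down only on $[0,t_1^k)$, where the forced term $c_\nu^k-\kappa^k u(\rmS)$ can have either sign. My plan is to show directly that $u^k(\rmS)$ is strictly decreasing on all of $[0,T]$ under the two hypotheses $c_\nu^k<c_\rmI^k$ and small $T$. A strictly decreasing continuous function meets any level at most once, which gives the claim.

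\textbf{Step 1: $0\le u_t^k(\rmS)<u_t^k(\rmI)$ for $t<T$.} I would redo the backward contradiction argument of Theorem~\ref{thm:charac_bang_bang}. At a last time $\tau$ with $u_\tau(\rmS)=u_\tau(\rmI)$, the equation gives
\begin{equation*}
-\dot u_\tau(\rmS)=c_\lambda(\lambda^\rmS_\tau-\hat\alpha_\tau)^2+\tilde\nu_\tau\big(c_\nu-\kappa u_\tau(\rmS)\big).
\end{equation*}
This must be compared with $-\dot u_\tau(\rmI)=c_\rmI-\gamma u_\tau(\rmI)$. The right-hand side above is at most $c_\lambda\bar C^2+c_\nu$, while $-\dot u_\tau(\rmI)\ge c_\rmI e^{-\gamma T}$ and $\hat\alpha_\tau=\lambda_\tau^\rmS$ there. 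So $c_\nu<c_\rmI$, together with $e^{-\gamma T}$ close to $1$ for small $T$, forces $-\dot u_\tau(\rmS)<-\dot u_\tau(\rmI)$, which is the same contradiction as before. Since $\dot u$ exists only a.e., I would carry this out with one-sided difference quotients or an integral form on $[\tau,\tau+h]$, using the Lipschitz continuity of the value function established earlier. Nonnegativity of $u(\rmS)$ follows from the Feynman–Kac representation as an expected nonnegative cost.

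\textbf{Step 2: sign of the derivative.} On $[t_1^k,T]$ we have $\tilde\nu=0$. The first two terms are then nonnegative and $\beta\hat\alpha Z(u(\rmI)-u(\rmS))>0$ by Step 1 and Lemma~\ref{lemma:pos_Z}, so $\dot u(\rmS)<0$ a.e. On $[0,t_1^k)$ the extra term $c_\nu-\kappa u(\rmS)$ is negative only when $u(\rmS)>c_\nu/\kappa$. Since $u_t(\rmS)\le u_t(\rmI)\le c_\rmI(T-t)$, a short horizon with $\kappa c_\rmI T\le c_\nu$ makes $u(\rmS)\le c_\nu/\kappa$ throughout. Under that condition the forced term is nonnegative and $\dot u(\rmS)<0$ a.e. everywhere.

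\textbf{Main obstacle.} If one does not want the short-time condition to trivialize the claim, the fallback is a sharper crossing argument. At any crossing time $s$ we have $u_s(\rmS)=c_\nu/\kappa$, so the forced term vanishes regardless of $\tilde\nu$. The right-hand side is then continuous in a neighbourhood of $s$ (apart from a possible jump at $t_1^k$), and it is strictly positive there by Step 1 and Lemma~\ref{lemma:pos_Z}. This gives a strict downward crossing at every crossing point, and the second-crossing contradiction from Theorem~\ref{thm:charac_bang_bang} applies verbatim. The delicate part is justifying this in the Carath\'eodory sense when $s=t_1^k$. There I would use one-sided limits of the integrand, which exist because $\tilde\nu$ is càdlàg and every other term is continuous.
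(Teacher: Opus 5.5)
Your Step~1 is the paper's argument. At the last time $\tau<T$ with $u_\tau(\rmS)=u_\tau(\rmI)$ one has $-\dot u_\tau(\rmS)\le c_\nu$ and $-\dot u_\tau(\rmI)=c_\rmI e^{-\gamma(T-\tau)}\ge c_\rmI e^{-\gamma T}$, and the paper gets the contradiction from $T<\min_k(\gamma^k)^{-1}\log(c_\rmI^k/c_\nu^k)$.

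You diverge in Step~2, and your extra condition $\kappa^k c_\rmI^k T\le c_\nu^k$ does more than you say. Since $u_t(\rmS)\le u_t(\rmI)<c_\rmI(T-t)$ for $t<T$ and $u_T(\rmS)=0$, it forces $u_t^{t_1,k}(\rmS)<c_\nu^k/\kappa^k$ on all of $[0,T]$, so the threshold is never met. This proves the lemma only vacuously. It makes the jump-time map $f$ in Theorem~\ref{thm:fbode_wellposedness} identically zero, so the only equilibrium obtained is the no-vaccination one. It is also far more restrictive than needed: for the parameters of Table~\ref{tab:param_single_pop_compact} it demands $T\le 0.2$, against roughly $T<48$ for the paper's condition.

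The paper's proof is exactly your fallback, which should be your main argument. At any meeting time $s$ the forced term $\tilde\nu_s(c_\nu-\kappa u_s(\rmS))$ vanishes, so $-\dot u_s(\rmS)=\inf_{\alpha\in[0,1]}\{c_\lambda(\lambda_s^{\rmS}-\alpha)^2+\beta\alpha Z_s(u_s(\rmI)-u_s(\rmS))\}>0$. The second-crossing contradiction of Theorem~\ref{thm:charac_bang_bang} then applies.

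Two simplifications are available here:
\begin{itemize}
\item The infimum form gives strict positivity without needing $\hat\alpha_s>0$. The first term vanishes only at $\alpha=\lambda_s^{\rmS}>0$, where the second equals $\beta\lambda_s^{\rmS}Z_s(u_s(\rmI)-u_s(\rmS))>0$.
\item Your concern about $s=t_1^k$ disappears. Since $|\tilde\nu_t(c_\nu-\kappa u_t(\rmS))|\le|c_\nu-\kappa u_t(\rmS)|\to 0$ as $t\to s$, the integrand is continuous at $s$ with a strictly positive value. It is therefore positive on a neighbourhood of $s$, and $u(\rmS)$ is strictly decreasing there however $\tilde\nu$ jumps.
\end{itemize}
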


The proof of Lemma~\ref{lemma:sub_one_jump} can be found in Appendix~\ref{sec:appendix_sub_one_jump}. The above lemma gives an epidemiologically natural condition for the at-most one threshold-crossing. It states that when the cost of vaccination is lower than the cost of infection, once the susceptible value function drops below the vaccination threshold, it does not rise above it again. 

Next, we construct the hitting-time map, which associates to the (unique) time at which the susceptible value function path reaches the vaccination threshold. Under the at-most one-crossing condition for the susceptible individual's value function, this time is well-defined. Then, we show that the resulting map is continuous in the value function under the supremum norm, ensuring that small perturbations of the value function lead to small perturbations of the induced switching time.

\begin{lemma}[Continuity of the hitting-time map]
\label{lemma:jump_time_continuity}
Let $c=(c^k)_{k\in[K]}\in[0,\infty)^K$. Consider the space
\begin{equation*}
    \begin{aligned}
        \mathcal{S}
:=
\Bigl\{
A\in & AC([0,T];\mathbb{R}^K):\;
A_T^k=0,\
\\&\#\{t\in[0,T]:A_t^k=c^k\}\leq 1, \ \forall k\in[K], 
\text{ and whenever }A_{t'}^k=c^k,\dot A_{t'}^k<0
\Bigr\}.
    \end{aligned}
\end{equation*}
For $A\in\mathcal{S}$, define $\tilde t(A)\in[0,T]^K$ by $t(A):=(\tilde t_1(A),\ldots,\tilde t_K(A))^\top$, where
$$
\tilde t_k(A):=
\begin{cases}
t_k, & \text{if there exists a unique }t_k\in[0,T]\text{ such that }A_{t_k}^k=c^k,\\
0, & \text{otherwise}.
\end{cases}
$$
Then $\tilde t:\mathcal{S}\to[0,T]^K$ is continuous when $\mathcal{S}$ is equipped with $\|A\|_T:=\sup_{t\in[0,T]}\|A_t\|_2$ and $[0,T]^K$ is equipped with $\|x\|_\infty:=\max_k|x_k|$.
\end{lemma}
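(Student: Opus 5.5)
The plan is a direct $\varepsilon$–$\delta$ argument, carried out componentwise. Since $\|\tilde t(A)-\tilde t(B)\|_\infty=\max_k|\tilde t_k(A)-\tilde t_k(B)|$ and $\sup_t|A^k_t-B^k_t|\le\|A-B\|_T$, it suffices to fix $k$ and $A\in\mathcal S$ and prove the following. For every $\delta>0$ there is $\rho>0$ such that every $B\in\mathcal S$ with $\|A-B\|_T<\rho$ satisfies $|\tilde t_k(A)-\tilde t_k(B)|\le\delta$.

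The key structural observation concerns the sign of $A^k-c^k$. Since $A^k_T=0\le c^k$, the function $A^k-c^k$ is nonpositive at $T$. I split into two cases.

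\textbf{Case 1: $A^k$ never meets $c^k$.} Then necessarily $c^k>0$, because otherwise $A^k_T=c^k$ would be a meeting point. By continuity and the intermediate value theorem, $A^k<c^k$ on all of $[0,T]$. Compactness then gives a gap $m:=\min_t(c^k-A^k_t)>0$. Any $B$ with $\|A-B\|_T<m$ stays strictly below $c^k$, so it never meets the threshold and $\tilde t_k(B)=0=\tilde t_k(A)$.

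\textbf{Case 2: $A^k$ meets $c^k$ at a unique $t_k$, with $\dot A^k_{t_k}<0$.}

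First I locate $A^k$ relative to $c^k$. Uniqueness of the meeting point together with continuity shows that $A^k-c^k$ has constant sign on $[0,t_k)$ and on $(t_k,T]$. The negative derivative at $t_k$ forces $A^k>c^k$ just before $t_k$ and $A^k<c^k$ just after. Hence $A^k>c^k$ on $[0,t_k)$ and $A^k<c^k$ on $(t_k,T]$.

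Next I build a margin around the crossing. Given $\delta>0$ (shrunk so that the relevant endpoints stay in $[0,T]$), compactness gives
\[
m_\delta:=\min\bigl\{|A^k_t-c^k|:\ t\in[0,T],\ |t-t_k|\ge\delta\bigr\}>0
\]
whenever this set of times is nonempty.

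Now take $B$ with $\|A-B\|_T<m_\delta$.
\begin{itemize}
\item Outside $(t_k-\delta,t_k+\delta)$, $B^k-c^k$ has the same strict sign as $A^k-c^k$. So $B^k$ cannot meet $c^k$ there.
\item If $t_k\in(0,T)$ and $\delta$ is small, then $B^k_{t_k-\delta}>c^k>B^k_{t_k+\delta}$. The intermediate value theorem gives a meeting point of $B^k$ in the window. Since $B\in\mathcal S$, it is the unique one, so $|\tilde t_k(B)-t_k|<\delta$.
\item If $t_k=T$, then $c^k=0$. Every $B\in\mathcal S$ then satisfies $B^k_T=0=c^k$, so by uniqueness $\tilde t_k(B)=T$ exactly.
\item If $t_k=0$, then $B^k<c^k$ on $[\delta,T]$. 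Either $B^k$ meets $c^k$ somewhere in $[0,\delta)$, or it never does and $\tilde t_k(B)=0$. In both cases $|\tilde t_k(B)-0|<\delta$.
\end{itemize}

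The delicate points I expect are of two kinds. The first is bookkeeping at the boundary: the convention $\tilde t_k=0$ in the ``otherwise'' case is exactly what makes the $t_k=0$ and no-crossing cases compatible. The second is the justification of the sign pattern near $t_k$. Here I interpret the hypothesis $\dot A^k_{t'}<0$ in the definition of $\mathcal S$ as asserting that the derivative exists at the meeting point (one-sided at $t'=0$ or $T$). This is needed because $A$ is only absolutely continuous, and it is the step where I would be most careful.
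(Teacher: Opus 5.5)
Your proof is correct and is essentially the paper's argument: a componentwise $\varepsilon$--$\delta$ proof split into three cases (no crossing, interior crossing, crossing at $t=0$), each handled by a compactness margin, the intermediate value theorem, and uniqueness of the crossing for $B\in\mathcal{S}$; the one minor difference is that you use a uniform margin off the window $(t_k-\delta,t_k+\delta)$, whereas the paper uses only the two endpoint values. Your separate treatment of $t_k=T$ also covers a case the paper's Case~2 overlooks: this case occurs only when $c^k=0$, where $\tilde t_k\equiv T$ on $\mathcal{S}$, but the paper asserts $t_k\in(0,T)$, and its choice $r=T$ would give $A^k_r=c^k$ rather than $A^k_r<c^k$.
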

The proof of Lemma~\ref{lemma:jump_time_continuity} can be found in Appendix~\ref{sec:appendix_jump_time_continuity}. We are ready now state a well-posedness result for the FBODE system~\eqref{eq:fbode}.

\begin{theorem}[Existence for the FBODE]
\label{thm:fbode_wellposedness}
Assume the \emph{short-time condition} holds, namely $T>0$ is sufficiently small so that the results in Lemmas~\ref{lemma:sub_wellposedness},~\ref{lemma:sub_continuity} and~\ref{lemma:sub_one_jump} apply. If $c_{\nu}^{k} < c_{\rmI}^{k}$ for all $k\in[K]$, then the FBODE system~\eqref{eq:fbode} admits at least one Carath\'eodory solution $(\boldsymbol{u},\boldsymbol{p})$ on $[0,T]$.
\end{theorem}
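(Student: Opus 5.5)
The plan is a Brouwer fixed-point argument on the finite-dimensional set of jump-time vectors $[0,T]^K$. Define $\Phi:[0,T]^K\to[0,T]^K$ by $\Phi(t_1):=\tilde t\big(u^{t_1}(\rmS)\big)$. Here $(u^{t_1},p^{t_1})$ is the unique Carath\'eodory solution of the sub-FBODE given by Lemma~\ref{lemma:sub_wellposedness}. The map $\tilde t$ is the hitting-time map of Lemma~\ref{lemma:jump_time_continuity}, with thresholds $c^k=c_\nu^k/\kappa^k$. Since $[0,T]^K$ is compact and convex, it suffices to show two things: $\Phi$ is well-defined and continuous, and any fixed point yields a Carath\'eodory solution of the full FBODE~\eqref{eq:fbode}.

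The first step is well-definedness, namely $u^{t_1}(\rmS)\in\mathcal{S}$ for every $t_1$. Absolute continuity follows from the Carath\'eodory solution concept, and $u^{t_1,k}_T(\rmS)=0$ holds by the terminal condition. The at-most-one-crossing property is Lemma~\ref{lemma:sub_one_jump}, using $c_\nu^k<c_{\rmI}^k$ and short time. The strict sign condition $\dot u^{t_1,k}_{t'}(\rmS)<0$ at a crossing time $t'$ is the remaining point.

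To get the sign condition, I would redo the argument of Theorem~\ref{thm:charac_bang_bang} for the sub-system. The $\rmI$-equation is decoupled, so $u^{t_1}(\rmI)$ is still given by~\eqref{eq:solnI}. The comparison argument then gives $u^{t_1}_t(\rmS)<u^{t_1}_t(\rmI)$ on $[0,T)$, and Lemma~\ref{lemma:pos_Z} keeps $Z_t^k>0$ along the sub-solution. With these, the right-hand side of the $\rmS$-equation at a threshold point has a definite sign. If $\tilde\nu=0$ there, this is exactly~\eqref{eq:neg_slope_crossing}. If $\tilde\nu=1$ there, the extra term is $\kappa^k u_{t'}(\rmS)-c_\nu^k=0$ at the threshold, so the sign is unchanged. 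Because the derivative only exists a.e., I would phrase the sign condition via one-sided derivatives or the continuous right-hand side evaluated from the appropriate side; presumably this is how Lemma~\ref{lemma:sub_one_jump} is proved.

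The second step is continuity, which follows by composition. By Lemma~\ref{lemma:sub_continuity}, $t_1\mapsto u^{t_1}(\rmS)$ is continuous into $C([0,T];\mathbb{R}^K)$ with the uniform norm. Its image lies in $\mathcal{S}$, and $\tilde t$ is continuous on $\mathcal{S}$ by Lemma~\ref{lemma:jump_time_continuity}. Brouwer's theorem then gives $t_1^*$ with $\Phi(t_1^*)=t_1^*$.

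The third step, consistency, is the one I expect to be the main obstacle. We must show that $\tilde\nu_t^k=\mathds{1}_{\{t<t_1^{*,k}\}}$ coincides with $\hat\nu_t^k=\mathds{1}\{\kappa^k u_t^{t_1^*,k}(\rmS)>c_\nu^k\}$, at least for a.e.\ $t$. That is enough because the ODEs are only required to hold almost everywhere.

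There are two cases for the consistency check. If a crossing exists at $t_1^{*,k}$, the single crossing together with the strictly negative slope and $u_T=0<c^k$ forces $u>c^k$ before $t_1^{*,k}$ and $u<c^k$ after it, so the two indicators agree. If there is no crossing, then $\tilde t_k=0$ and $u<c^k$ on all of $[0,T]$ by continuity and $u_T=0$, so $\hat\nu\equiv0=\tilde\nu$. A delicate point is that the default value $0$ must not be confused with a genuine crossing at time $0$; both cases still give the same indicator a.e., so this is harmless.

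With the indicators matched, $\hat\alpha$ is given by the same formula in both systems. Hence $(u^{t_1^*},p^{t_1^*})$ satisfies~\eqref{eq:fbode} a.e. and is absolutely continuous, i.e.\ it is a Carath\'eodory solution of the full FBODE on $[0,T]$.
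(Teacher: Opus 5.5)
Your proposal is correct and follows essentially the same route as the paper: Brouwer's theorem on $[0,T]^K$ for the composition of Lemma~\ref{lemma:sub_continuity} with the hitting-time map of Lemma~\ref{lemma:jump_time_continuity}. As in the paper, you read the negative-slope condition off the HJB equation using $Z^k_\tau>0$, $u^{t_1,k}_\tau(\rmI)>u^{t_1,k}_\tau(\rmS)$, and the fact that the vaccination term vanishes at the threshold. Your explicit a.e.\ check that $\tilde\nu=\hat\nu$ at the fixed point fills in a step the paper only asserts.

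One small correction: $u^{t_1}(\rmS)<u^{t_1}(\rmI)$ for the sub-FBODE does not come from a verbatim redo of Theorem~\ref{thm:charac_bang_bang}, because $\tilde\nu$ is exogenous and you cannot plug $\nu=0$ into the infimum. It comes instead from the modified comparison in the proof of Lemma~\ref{lemma:sub_one_jump}, which bounds the extra term by $c_\nu^k$ and then uses $c_\nu^k<c_{\rmI}^k$ together with the short-time condition.
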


\begin{proof}
Fix an arbitrary vaccination jump-time vector $t_1=(t_1^k)_{k\in[K]}\in[0,T]^K$ and consider the induced vaccination
$$
\tilde\nu_t^{t_1,k}:=\mathds{1}_{\{t<t_1^k\}}, \qquad t\in[0,T],\ k\in[K].
$$
We adopt the right-continuous convention $\tilde\nu_{t_1^k}^{t_1,k}=0$. Since the value of the control at a single time point does not affect the integral formulation of the ODE system, this convention is without loss of generality and is used only to simplify the notation. Let $(u^{t_1},p^{t_1})$ denote the unique solution of the corresponding sub-FBODE system, i.e., system \eqref{eq:fbode} where $\hat\nu$ replaced by $\tilde\nu^{t_1}$. Existence and uniqueness for small $T$ are given by Lemma~\ref{lemma:sub_wellposedness}. From $u^{t_1}(\rmS):=(u_t^{t_1,k}(\rmS))_{t\in[0,T],\,k\in[K]}$, define a new jump-time vector $\tilde t_1=f(t_1)\in[0,T]^K$ by
$$
\tilde t_{1,k}:=
\begin{cases}
t_k, & \text{if there exists a unique }t_k\in[0,T]\text{ such that }u_{t_k}^{t_1,k}(\rmS)=c_\nu^k/\kappa^k,\\[0.7em]
0, & \text{otherwise}.
\end{cases}
$$
By Lemma~\ref{lemma:sub_one_jump}, each component $t\mapsto u_t^{t_1,k}(\rmS)$ hits the threshold $c_\nu^k/\kappa^k$ at most once, so $\tilde t_1$ is well-defined. We now check that $f$ is continuous. Lemma~\ref{lemma:sub_continuity} yields continuity of the map $t_1\mapsto u^{t_1}(\rmS)$ in the uniform norm.

Then, to verify the continuity of the map $u^{t_1}(\rmS)\mapsto \tilde t_1$, we need to apply Lemma~\ref{lemma:jump_time_continuity}. To apply Lemma~\ref{lemma:jump_time_continuity}, it remains to check if $u_\tau^{t_1,k}(\rmS)=c_\nu^k/\kappa^k$ for some $k\in[K]$ and $\tau\in[0,T]$,
then $\dot u_\tau^{t_1,k}(\rmS)<0$. At time $\tau$, the HJB equation gives
$$
-\dot{u}_\tau^{t_1,k}(\rmS)
=
\inf_{\alpha\in[0,1]}
\left\{
c_\lambda^k\bigl(\lambda_\tau^{k,\rmS}-\alpha\bigr)^2
+
\beta^k \alpha Z_\tau^k \bigl(u_\tau^{t_1,k}(\rmI)-u_\tau^{t_1,k}(\rmS)\bigr)
\right\}.$$
By the positivity assumption on the guidelines and the analogue of Lemma~\ref{lemma:pos_Z} for the sub-FBODE, we have $\lambda_\tau^{k,\rmS}>0$ and $Z_\tau^k>0$. Furthermore, $\beta^k>0$, $c_\lambda^k>0$, and Lemma~\ref{lemma:sub_one_jump} implies that $u_\tau^{t_1,k}(\rmI)>u_\tau^{t_1,k}(\rmS)$. Hence, we have $\dot u_\tau^{t_1,k}(\rmS)<0$.

Therefore, the fixed point mapping $f:[0,T]^K \to [0,T]^K$ is continuous; since $[0,T]^K$ is non-empty, convex and compact, Brouwer fixed point theorem guarantees the existence of $\tau^* \in [0,T]^K$ such that $\tau^*=f(\tau^*)$. The corresponding unique sub-FBODE solution $(u^{\tau^*}, p^{\tau^*})$ is the full-FBODE solution. This completes the proof.
\end{proof}

Intuitively, Theorem~\ref{thm:fbode_wellposedness} states that MFNE socialization levels and vaccination rates exist if the cost of vaccination is lower than the cost of infection for each group $k\in[K]$ and the time horizon is short enough.

\section{Model with Population-awareness}
\label{sec:pop-aware}
During a large-scale pandemic (such as the recent COVID-19 pandemic), people may acquire infection information of the world or their country via internet, and this might affect their behavior by incentivizing them to take precautions in advance. Therefore, we consider an extension of the baseline MFG model in Section~\ref{sec:mfg_res} by incorporating the population infection information into the agents' cost objective. Intuitively, it represents extra awareness cost that an individual is perceiving from the population-level disease spread information. Since our model does not include waning of immunity rates, individuals in state R stay immune for the disease and therefore, perceive no awareness cost from the population. Consider the composite infected proportion $P_t(\mathrm{I})=\sum_{l=1}^K m^l p_t^l(\mathrm{I})$. The cost objective of representative agent $k$, for $k\in[K]$ under the new modeling regime is defined as
\begin{equation}
\begin{aligned}
J^k(\boldsymbol{\alpha}^k, \boldsymbol{\nu}^k ; \boldsymbol{Z})&=\mathbb{E}\bigg[\int_{0}^{T} \Big\{\big(c_\lambda^k(\lambda_t^{k, \mathrm{S}}-\alpha_t^k)^2+c_\nu^k\nu_t^k+c_p^{k,\rmS} g(P_t(\rmI))\big) \mathds{1}_{\mathrm{S}}(X_t^k) \\
& +\big((\lambda_t^{k, \mathrm{I}}-\alpha_t^k)^2+c_{\mathrm{I}}^k+c_p^{k,\rmI} g(P_t(\rmI))\big) \mathds{1}_{\mathrm{I}}(X_t^k)+(\lambda_t^{k, \mathrm{R}}-\alpha_t^k)^2 \mathds{1}_{\mathrm{R}}(X_t^k) d t\bigg]
\end{aligned}
\end{equation}
Here, $c_p^{k,\rmS} > 0$ and $c_p^{k,\rmI} > 0$ are the scaling coefficients for the population-awareness cost terms for representative agent $k$ when she is in state S and state I, respectively. The function $g:[0,1] \rightarrow \mathbb{R}_{+}$ is a non-decreasing awareness cost function, capturing the idea that higher infection prevalence induces greater concern and thus higher awareness cost. In this work, we take $g(x)=x$ for simplicity, which keeps term on a comparable scale with the other components of the objective.

\begin{theorem}
\label{thm:awareness_full_charac}
    The multi-population mean field Nash equilibrium controls with population-awareness are written as
    \begin{equation}
    \label{eq:aware_optcontrol}
        \begin{aligned}
\hat{\alpha}_t^k(\mathrm{S}) & =\lambda_t^{k, \mathrm{S}}+\frac{\beta^k Z_t^k\big(u_t^k(\mathrm{S})-u_t^k(\mathrm{I})\big)}{2 c_\lambda^k} \\
\hat{\alpha}_t^k(\mathrm{I}) & =\lambda_t^{k, \mathrm{I}} \\
\hat{\alpha}_t^k(\mathrm{R}) & =\lambda_t^{k, \mathrm{R}} \\
\hat{\nu}_{t}^{k} & = \mathds{1}\{{\kappa^k u_{t}^k(\mathrm{S}) > c_{\nu}^k}\}
\end{aligned}
    \end{equation}
    for all $k \in [K]$, $t\in[0,T]$, if $(u,p)=(u_{t}^{k}, p_{t}^{k})_{k \in [K],t\in [0,T]}$ solves the following FBODE system:
    \begin{equation}
    \label{eq:fbode_pop_aware}
        \begin{aligned}
\dot{p}_t^k(\mathrm{S})= & -\beta^k \hat{\alpha}_t^k(\mathrm{S}) Z_t^k p_t^k(\mathrm{S})-\kappa^k \hat{\nu}_t^k p_t^k(\mathrm{S}), \\
\dot{p}_t^k(\mathrm{I})= & \beta^k \hat{\alpha}_t^k(\mathrm{S}) Z_t^k p_t^k(\mathrm{S})-\gamma^k p_t^k(\mathrm{I}), \\
\dot{p}_t^k(\mathrm{R})= & \gamma^k p_t^k(\mathrm{I})+\kappa^k \hat{\nu}_t^k p_t^k(\mathrm{S}), \\
\dot{u}_t^k(\mathrm{S})= & \beta^k \hat{\alpha}_t^k(\mathrm{S}) Z_t^k\big(u_t^k(\mathrm{S})-u_t^k(\mathrm{I})\big)+\kappa^k \hat{\nu}_t^k\big(u_t^k(\mathrm{S})-u_t^k(\mathrm{R})\big) \\ & \hspace{4cm}-c_\lambda^k\big(\lambda_t^{k, \mathrm{S}}-\hat{\alpha}_t^k(\mathrm{S})\big)^2-c_\nu^k\hat{\nu}_t^k - c_{p}^{k,\rmS} P_t(\mathrm{I}), \\
\dot{u}_t^k(\mathrm{I})= & \gamma^k\big(u_t^k(\mathrm{I})-u_t^k(\mathrm{R})\big)-c_{\rmI}^k - c_{p}^{k,\rmI} P_t(\mathrm{I}), \\
\dot{u}_t^k(\mathrm{R})= & 0, \\
u_T^k(e)= & 0, \quad p_0^k(e)=\pi^k_0(e), \quad \forall x \in E, \\
Z_t^k= & \sum_{l \in[K]} w(k, l) \mathbb{E}[\hat{\alpha}_t^l(\mathrm{I})] p_t^l(\rmI) m^l, \quad \forall k \in[K], \forall t \in[0, T] .
\end{aligned}
    \end{equation}
\end{theorem}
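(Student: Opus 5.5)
The proof mirrors that of Theorem~\ref{thm:full_characterization}. The only change is that the running cost now carries an extra term depending on the population flow. We write
\begin{equation*}
\tilde f^k(t,e,\alpha,\nu) := f^k(t,e,\alpha,\nu) + c_p^{k,\rmS} P_t(\rmI)\mathds{1}_{\rmS}(e) + c_p^{k,\rmI} P_t(\rmI)\mathds{1}_{\rmI}(e).
\end{equation*}
The key observation is that in the MFG best-response problem the flow $(p_t^l)_{l\in[K],t\in[0,T]}$ is frozen, exactly as $Z$ is. Hence $P_t(\rmI)=\sum_l m^l p_t^l(\rmI)$ is an exogenous, continuous, bounded function of time from the viewpoint of the representative agent. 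In particular $\tilde f^k$ remains bounded, so the Lipschitz-in-time estimate for $u^k_t(e)$ derived before Theorem~\ref{thm:full_characterization} carries over verbatim, with $C_f$ replaced by $C_f+\max_k(c_p^{k,\rmS}+c_p^{k,\rmI})$.

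Next we form the Hamiltonian $H^k$ as in~\eqref{eq:hmlt} with $f^k$ replaced by $\tilde f^k$. Since the awareness terms depend neither on $\alpha$ nor on $\nu$, the minimization over $[0,1]\ni\alpha$ and $[0,1]\ni\nu$ is literally the same problem as before, up to an additive constant. Strong convexity in $\alpha$ gives the unique minimizer $\hat\alpha^k_t(e)$, and the first-order condition yields the same expressions. For $\alpha$ in S this is $\lambda_t^{k,\rmS}+\beta^kZ_t^k(u_t^k(\rmS)-u_t^k(\rmI))/(2c_\lambda^k)$, which lies in $[0,1]$ because $u(\rmS)\le u(\rmI)$ as in the proof of Theorem~\ref{thm:charac_bang_bang}; otherwise we simply project. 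In I and R the minimizer is $\hat\alpha=\lambda$. Linearity in $\nu$ gives the indicator $\mathds{1}\{\kappa^k u_t^k(\rmS)>c_\nu^k\}$, using $u_t^k(\rmR)=0$. Thus~\eqref{eq:aware_optcontrol} coincides with~\eqref{eq:optcontrol}.

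We then write the HJB equation~\eqref{eq:hjb} with this Hamiltonian. Substituting the minimizers, we expand it state by state:
\begin{itemize}
\item in R there is no cost and no transitions, so $\dot u^k(\rmR)=0$ and hence $u^k\equiv0$ there;
\item in I we get $\dot u_t^k(\rmI)=\gamma^k(u_t^k(\rmI)-u_t^k(\rmR))-c_\rmI^k-c_p^{k,\rmI}P_t(\rmI)$;
\item in S we get the stated equation, with the additional $-c_p^{k,\rmS}P_t(\rmI)$.
\end{itemize}
The KFP equation~\eqref{eq:kfp} is unchanged, since the awareness cost affects neither the transition rates nor, given the unchanged optimal controls, the dynamics. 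Finally we close the loop as before. Consistency requires that the frozen flow $p$ entering both $Z$ and $P(\rmI)$ equals the flow generated by the optimal controls. This yields the coupled system~\eqref{eq:fbode_pop_aware}, and any solution of it produces controls that are best responses to the flow they generate, i.e.\ an MFNE.

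The main point requiring care, rather than a real obstacle, is the verification direction. We must ensure that a solution $u$ of the HJB with the now time-inhomogeneous source $P_t(\rmI)$ is indeed the value function and that the feedback controls are optimal. This follows from the standard verification argument for finite-state controlled Markov chains, as in~\cite[Section 7.2]{carmona2018probabilistic}. Applying Dynkin's formula to $u_t(X_t)$ only needs $u$ absolutely continuous and the cost integrable, and both hold because $P(\rmI)$ is continuous and bounded and $\hat\nu$ is piecewise continuous, so Carath\'eodory solutions suffice.
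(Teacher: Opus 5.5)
Your proposal follows the paper's route. The paper's proof simply defers to Theorem~\ref{thm:full_characterization}, and you carry that out: the awareness terms do not depend on $(\alpha,\nu)$, so the minimizers are unchanged, the HJB gains the sources $-c_p^{k,\rmS}P_t(\rmI)$ and $-c_p^{k,\rmI}P_t(\rmI)$, and the KFP is untouched. Freezing the whole flow $p$ (so that $P(\rmI)$, not only $Z$, is exogenous) and sketching the verification step are useful precisions that the paper leaves implicit.

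One side justification is wrong, however. You claim $\hat\alpha_t^k(\rmS)\in[0,1]$ because $u(\rmS)\le u(\rmI)$ ``as in the proof of Theorem~\ref{thm:charac_bang_bang}''. That contact-point argument relies on $-\dot u_\tau(\rmS)\le 0$, which fails here because of the source $c_p^{k,\rmS}P_\tau(\rmI)$. Indeed, expanding at $T$ gives $u_{T-}(\rmS)>u_{T-}(\rmI)$ whenever $c_p^{k,\rmS}P_T(\rmI)>c_\rmI^k+c_p^{k,\rmI}P_T(\rmI)$. The paper obtains $u(\rmS)<u(\rmI)$ only under $c_\rmI^k>c_p^{k,\rmS}$ and a short horizon (Theorem~\ref{thm:aware_bang_bang}). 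Even when the inequality holds, it gives only $\hat\alpha_t^k(\rmS)\le\lambda_t^{k,\rmS}\le 1$, not $\hat\alpha_t^k(\rmS)\ge 0$. Your fallback ``otherwise we simply project'' would change the formula in~\eqref{eq:aware_optcontrol}, so it does not prove the statement as written. The characterization therefore tacitly assumes that the unconstrained first-order minimizer is interior. This is a caveat you share with the paper, which uses the first-order condition without comment, and you should state it as an assumption rather than claim it as proved.

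Similarly, piecewise continuity of $\hat\nu$ is only known under the hypotheses of Theorem~\ref{thm:aware_bang_bang}. Your Dynkin/verification step itself needs only measurability of $\hat\nu$, so you should not invoke piecewise continuity there.
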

\begin{proof}
    The proof proceeds similarly as Theorem~\ref{thm:full_characterization} and is therefore omitted for brevity.
\end{proof}

We then derive a sufficient parameter condition under which the equilibrium vaccination rate retains its at-most one-jump structure in the population-awareness setting. In addition, we compare the baseline model and its population-awareness extension in terms of their best response structures.
\begin{theorem}
\label{thm:aware_bang_bang}
    If $c_{\rmI}^k > c_p^{k,\rmS}$ for all $k \in [K]$, then the equilibrium vaccination rates also follow a bang-bang strategy with at most one jump under short-time condition. In addition, if the mean-field interaction, $Z$, is fixed, the best response vaccination jump time in population-awareness extension is no less than that in baseline model, with all other parameters stay the same.
\end{theorem}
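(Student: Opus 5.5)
We mimic the proof of Theorem~\ref{thm:charac_bang_bang}, working with $K=1$ for presentation and dropping superscripts. There are two parts: the one-jump structure under $c_{\rmI}>c_p^{\rmS}$, and the comparison of jump times for fixed $Z$.

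\textbf{Part 1: one-jump structure.} The recovered value is still $u_t(\rmR)\equiv 0$. The infected value solves the linear ODE
\[
\dot u_t(\rmI)=\gamma u_t(\rmI)-c_{\rmI}-c_p^{\rmI}P_t(\rmI),
\]
so it is given explicitly by
\[
u_t(\rmI)=\int_t^T e^{-\gamma(s-t)}\bigl(c_{\rmI}+c_p^{\rmI}P_s(\rmI)\bigr)\,ds .
\]
This is no longer obviously monotone, but it is strictly positive on $[0,T)$ with $u_T(\rmI)=0$.

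The key step is to show $u_t(\rmS)<u_t(\rmI)$ on $[0,T)$. Set $D_t:=u_t(\rmI)-u_t(\rmS)$, so $D_T=0$. Plug $\alpha=\lambda^{\rmS}_t$, $\nu=0$ into the HJB infimum, as in \eqref{eq:boundus}. This gives
\[
-\dot u_t(\rmS)\le -\beta\lambda^{\rmS}_tZ_tD_t+c_p^{\rmS}P_t(\rmI),
\]
and hence
\[
-\dot D_t\ge c_{\rmI}+(c_p^{\rmI}-c_p^{\rmS})P_t(\rmI)-\gamma u_t(\rmI)+\beta\lambda^{\rmS}_tZ_tD_t .
\]
At any time $\tau$ with $D_\tau=0$, the right-hand side is at least $c_{\rmI}-c_p^{\rmS}-\gamma u_\tau(\rmI)$, because $c_p^{\rmI}P\ge0$ and $P\le1$.

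Here we use the hypothesis $c_{\rmI}>c_p^{\rmS}$ together with the short-time condition. Since $u_\tau(\rmI)\le T(c_{\rmI}+c_p^{\rmI})$, taking $T$ small enough that $\gamma T(c_{\rmI}+c_p^{\rmI})<c_{\rmI}-c_p^{\rmS}$ makes this quantity strictly positive. So at any zero $\tau$ of $D$, we get $\dot D_\tau<0$, meaning $D$ strictly decreases through every zero. At $T$ this forces $D>0$ just before $T$. The largest-zero contradiction argument of Theorem~\ref{thm:charac_bang_bang} then applies verbatim: at the last zero $\tau<T$, $D$ would have to leave zero increasing, contradicting $\dot D_\tau<0$.

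With $D>0$ and $Z>0$ from Lemma~\ref{lemma:pos_Z}, take a crossing time $t_1$ of the threshold $c_\nu/\kappa$, with $\hat\nu_{t_1}=0$ by convention. There
\[
-\dot u_{t_1}(\rmS)=\inf_\alpha\{\cdots\}+c_p^{\rmS}P_{t_1}(\rmI)>0,
\]
since the infimum is strictly positive exactly as in \eqref{eq:neg_slope_crossing} and the added awareness term is nonnegative. Every crossing therefore has strictly negative slope, so at most one crossing can occur, and the bang-bang structure follows as before.

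\textbf{Part 2: comparison of jump times.} Fix $Z$ (and the flow $P$). Let $u^0$ denote the baseline values and $u^a$ the awareness values. The plan is a comparison principle for the backward HJB system in the pair $(u(\rmS),u(\rmI))$.

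First, the explicit formula gives $u^a_t(\rmI)\ge u^0_t(\rmI)$ directly. Next, write the $\rmS$-equation as
\[
-\dot u_t(\rmS)=\mathcal H\bigl(t,u_t(\rmS),u_t(\rmI)\bigr)+c_p^{\rmS}P_t(\rmI),
\]
where
\[
\mathcal H(t,x,y)=\inf_{\alpha,\nu}\bigl\{c_\lambda(\lambda^{\rmS}_t-\alpha)^2+c_\nu\nu+\beta\alpha Z_t(y-x)-\kappa\nu x\bigr\}.
\]
$\mathcal H$ is nondecreasing in $y$ because $\alpha\ge0$, and Lipschitz in $x$. The awareness system thus has a larger source term and larger coupling input $y$.

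The standard comparison for such scalar backward ODEs then gives $u^a_t(\rmS)\ge u^0_t(\rmS)$. To prove it, let $w=u^a(\rmS)-u^0(\rmS)$. Then
\[
-\dot w\ge L_t w
\]
for a bounded measurable $L_t$ obtained from the Lipschitz difference quotient, with $w_T=0$. Grönwall's inequality backward in time yields $w\ge0$.

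Finally, by Part 1 both $u^a(\rmS)$ and $u^0(\rmS)$ cross $c_\nu/\kappa$ at most once, and only downward. The jump time $t_1$ is the last time $u(\rmS)\ge c_\nu/\kappa$, or $0$ if there is none. Pointwise domination of $u^a(\rmS)$ over $u^0(\rmS)$ therefore gives $t_1^a\ge t_1^0$.

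\textbf{Main obstacle.} The delicate step is Part 1's dominance $u(\rmS)<u(\rmI)$. With awareness, $u(\rmI)$ is no longer explicitly monotone, so the slope comparison must rest on the sign of $c_{\rmI}-c_p^{\rmS}-\gamma u(\rmI)$. That is exactly where the hypothesis $c_{\rmI}>c_p^{\rmS}$ and the short-time condition enter.
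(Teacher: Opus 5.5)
Your proposal is correct. Part~1 follows the paper's proof essentially step by step: the integral formula for $u_t(\rmI)$, a short-time bound on $\gamma u_t(\rmI)$, the last-coincidence-time contradiction giving $u(\rmS)<u(\rmI)$ on $[0,T)$, and then a strictly negative slope at threshold crossings. The paper omits that last step, and you supply it correctly, noting that the extra term $c_p^{\rmS}P_t(\rmI)\ge 0$ only helps. The paper's constant $\delta_k=c_{\rmI}^k-\max\{c_p^{k,\rmS}-c_p^{k,\rmI},0\}$ and its condition $(c_{\rmI}^k+c_p^{k,\rmI})(1-e^{-\gamma^kT})<\delta_k$ allow a slightly longer horizon than your $\gamma T(c_{\rmI}+c_p^{\rmI})<c_{\rmI}-c_p^{\rmS}$, but either works.

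Part~2 takes a genuinely different route. The paper freezes $Z$ (hence $P$) and observes that, for every admissible control, the awareness cost is the baseline cost plus $\theta$ times a nonnegative term. Taking infima then gives $u^{\theta_2}_t(\rmS)\ge u^{\theta_1}_t(\rmS)$ in one line, with no use of the Hamiltonian's structure. Your argument is a backward comparison principle: $u^a(\rmI)\ge u^0(\rmI)$, $\mathcal H$ nondecreasing in $y$ and Lipschitz in $x$, linearization with a bounded measurable coefficient, and backward Gr\"onwall. It is longer, but it stays at the ODE level. It therefore does not need to identify the FBODE solution with the infimum of expected costs, and it pinpoints the structural reason for the ordering: $\beta\alpha Z_t\ge 0$, i.e.\ monotonicity of the Hamiltonian in $u(\rmI)$. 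You also make explicit a step the paper leaves implicit: pointwise domination of $u(\rmS)$ yields ordered jump times only because of the single downward crossing from Part~1.

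A minor slip: plugging in $\alpha=\lambda_t^{\rmS}$, $\nu=0$ gives $-\dot u_t(\rmS)\le \beta\lambda_t^{\rmS}Z_tD_t+c_p^{\rmS}P_t(\rmI)$, with a plus sign. The $D$-term in your bound on $-\dot D_t$ should therefore be $-\beta\lambda_t^{\rmS}Z_tD_t$. Since you only evaluate it at zeros of $D$, the argument is unaffected.
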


Theorem~\ref{thm:aware_bang_bang} states that if the population-awareness cost coefficient for susceptible individuals is smaller than their infection costs, then they vaccinate until a specific time and then they stop vaccinating. Furthermore, if people are aware of the population-level infection via their cost functional, then they decide to vaccinate for longer durations under same population conditions.

\section{Numerical Study}
\label{sec:numerics}
In this section, we first propose an algorithm for numerically computing the multi-population MFNE, and then implement both the baseline model and its population-aware extension. We first conduct a single-population simulation study to illustrate the equilibrium vaccination behavior and the impact of incorporating population-awareness to visualize our theoretical findings, and then conduct a multi-population real-data informed application. Throughout the implementation, we keep the equilibrium vaccination rate as an indicator function based on the value functions without assuming an at-most one-jump structure. The resulting numerical solutions confirm the at-most one-jump structures established in Theorem~\ref{thm:charac_bang_bang} and Theorem~\ref{thm:aware_bang_bang}. Finally, we calibrate the model using socio-economic parameters informed by real data from~\cite{liu2025incorporatingauthorityperceptioneconomic}, and conduct a data-driven study to quantify policy-relevant effects.

Following the theoretical results, in order to find a multi-population MFNE for our model, we will need to solve the FBODE system given in Theorem~\ref{thm:full_characterization} for the base model and similarly in Theorem~\ref{thm:awareness_full_charac} for the population-awareness extension. We discretize the continuous time FBODE system by Euler scheme in time and solve the resulting discrete system by fixed-point iterations. The state distribution is solved forward in time from the given initial condition, while the value function is solved backward in time from the terminal condition, which is $0$ for each state. Moreover, because the aggregate term depends only on the state distribution and the model parameters, it can be computed separately from the equilibrium socialization and vaccination controls. We therefore update these variables sequentially and repeat the procedure until convergence. The details of the numerical approach are given in Algorithm~\ref{algo:vacc}.

\begin{algorithm}[ht]
\caption{Multi-population MFNE \label{algo:vacc}}
\textbf{Input:}
\begin{itemize}[leftmargin=*, nosep]
    \item Time horizon $T$; Time increments $\Delta t$; Initial state distributions $\pi_0^k(\cdot)$ for $k\in[K]$
    \item Model parameters for $k \in [K]$: 
          $\beta^k, \kappa^k, \gamma^k, c_{\lambda}^k, c_{\nu}^k, c_{\rmI}^k$. 
    \item Social-distancing guidelines $\lambda^k = (\lambda_t^{k,e})_{t\in\{0, \Delta t, 2\Delta t, \dots, T\}=:[T],e\in E}$
    \item Group connection matrix $(w(k,l))_{k,l \in [K]}$; Convergence tolerance $\epsilon$
\end{itemize}
\vskip1mm

\textbf{Output:} MFNE socialization levels and vaccination rates for representative agents in each group $k\in[K]$: $\hat{\boldsymbol{\alpha}}^k$, $\hat{\boldsymbol{\nu}}^k$; MFNE state densities in each group $k\in[K]$: $\bp^k$.

\vskip2mm

\begin{algorithmic}[1]
\STATE Initialize ${\bp}^{k,(0)}(e)=({p}_t^{k,(0)}(e))_{t \in [T]}$ and ${\bu}^{k,(0)}(e)=({u}_0^{k,(0)}(e))_{t\in [T]}$ \\ for $k\in[K], e\in E$. Set the iteration counter at $j = 1$. \vskip2mm
\WHILE{$\lVert {\bp}^{k,(j)}- {\bp}^{k,(j-1)}\rVert>\epsilon$ \OR $\lVert {\bu}^{k,(j)}- {\bu}^{k,(j-1)}\rVert>\epsilon$, 
 for any $k\in[K]$}\vskip2mm
    \STATE{Calculate $\boldsymbol{Z}^{k,(j+1)} = (Z_t^{k, (j+1)})_{t\in[T]}$ using the last equation in FBODE~\eqref{eq:fbode} and the second equation in~\eqref{eq:optcontrol}.}\vskip1.5mm
    \STATE{Calculate best response socialization levels $\boldsymbol{\alpha}^{k,(j)} = (\alpha_t^{k,(j)}(e))_{t\in[T],e\in E}$ with the first three equations in~\eqref{eq:optcontrol}, and best response vaccination rate $\boldsymbol{\nu}^{k,(j+1)} = (\nu_t^{k,(j+1)})_{t \in [T]}$ with $\nu_t^{k,(j)} = \mathds{1}\{\kappa^k u_{t}^{k}(\rmS) > c_{\nu}^k\}$ by using $\boldsymbol{u}^{k,(j)}$, $\boldsymbol{Z}^{k,(j+1)}$.}\vskip1.5mm
    \STATE{Update $\bp^{k, (j+1)}$ by solving the forward ODEs in~\eqref{eq:fbode} by using $\boldsymbol{\alpha}^{k,(j+1)}, \boldsymbol{\nu}^{k,(j+1)}, \bZ^{k,(j+1)}, \bu^{k,(j)}$.}\vskip1.5mm
    \STATE{Update $\bu^{k, (j+1)}$ by solving the backward ODEs in~\eqref{eq:fbode} by using $\boldsymbol{\alpha}^{k,(j+1)}, \boldsymbol{\nu}^{k,(j+1)}, \bZ^{k,(j+1)}, \bp^{k,(j+1)}$.}\vskip1.5mm
\ENDWHILE
\vskip2mm
\STATE{Calculate $\hat{\bZ}^{k}$ for all $k\in[K]$ with the last equation in FBODE~\eqref{eq:fbode} and the second equation in~\eqref{eq:optcontrol}.}\vskip2mm
\STATE{Calculate best response socialization levels $(\hat{\alpha}_t^{k}(e))_{t\in[T],e\in E}$ by plugging in $\bu^{k,(j)}$, $\hat{\bZ}^{k}$ into the first three equations in~\eqref{eq:optcontrol}, and best response vaccination rate with $\hat{\nu}_t^{k} = \mathds{1}\{\kappa^k u_{t}^{k,(j)}(\rmS) > c_{\nu}^k\}$ for all $k\in[K]$.} \vskip2mm
\RETURN $(\hat{\boldsymbol{\alpha}}^k, \hat{\boldsymbol{\nu}}^k, \bp^{k,(j)})_{k\in[K]}$.
\end{algorithmic}
\end{algorithm}

\subsection{Single-population Simulation Study}
Our first numerical experiment is a sensitivity analysis study aimed at illustrating the structure of the model and the effect of population-awareness parameters on the MFNE. To this end, the multi-population network is deactivated, and we focus on a single-population setting for simplification in presentation. Parameters that are not the primary objects of interest are fixed at empirically motivated values, and are summarized in Table~\ref{tab:param_single_pop_compact}. Vaccination parameters $c_{\nu}$ and $\kappa$ are rescaled from the ones in~\cite{liu2025incorporatingauthorityperceptioneconomic} to fit the present linear vaccination cost formulation. 
We choose $\beta$ and $\gamma$ based on initial SARS-CoV-2 estimates. We take $\beta = 0.4$ as the average transmission level~\cite{Liu2020}, and $\gamma=1/7$ represents the recovery rate of 7 days on average~\cite{VanKampen2021}. We set the horizon $T=80$, corresponding to an 80-day operating period after the infection is first identified.

\begin{table}[htbp]
\centering
\renewcommand{\arraystretch}{1.25}
\begin{tabular}{|l l l|}
\hline
$T = 80$ 
& $\Delta t = 0.016$ 
& $\epsilon = 0.1$ \\

$\beta = 0.4$ 
& $\gamma = 1/7$ 
& $\kappa = 0.005$ \\

$c_\lambda = 1.0$ 
& $c_{\mathrm{I}} = 1.0$ 
& $c_\nu = 0.001$ \\

$\lambda^\rmS = 0.9$ 
& $\lambda^\rmI = 0.9$ 
& $\lambda^\rmR = 0.9$ \\

$\pi_0(\rmS) = 0.99$
& $\pi_0(\rmI) = 0.01$
& $\pi_0(\rmR) = 0$ \\

\hline
\end{tabular}
\caption{Single-population numerical experiment: parameter configuration.}
\label{tab:param_single_pop_compact}
\end{table}

We compare the MFNE behaviors and state distributions across three different values of the population-awareness coefficient, namely $0$, $0.1$, and $0.5$. Particularly, the case where the population-awareness coefficient is equal to 0 corresponds to the baseline MFG model introduced in Section~\ref{sec:mfg_res}. Figure~\ref{fig:p1} presents the equilibrium vaccination rates under the MFNE. The left panel validates the at-most one-jump bang-bang structure of the equilibrium vaccination rate, while the right panel, together with Figure~\ref{fig:p2}, shows that a larger population-awareness coefficient leads individuals to sustain vaccination for a longer period. Figure~\ref{fig:p3} further reveals an important \textit{trade-off} between equilibrium vaccination and socialization decisions. In particular, simulations with a higher population-awareness coefficient generate higher equilibrium socialization levels. This partially offsets the longer vaccination duration induced by higher population-awareness and results in a slightly higher infection proportion.

\begin{figure}[ht]
    \centering
    \includegraphics[width=0.9\linewidth]{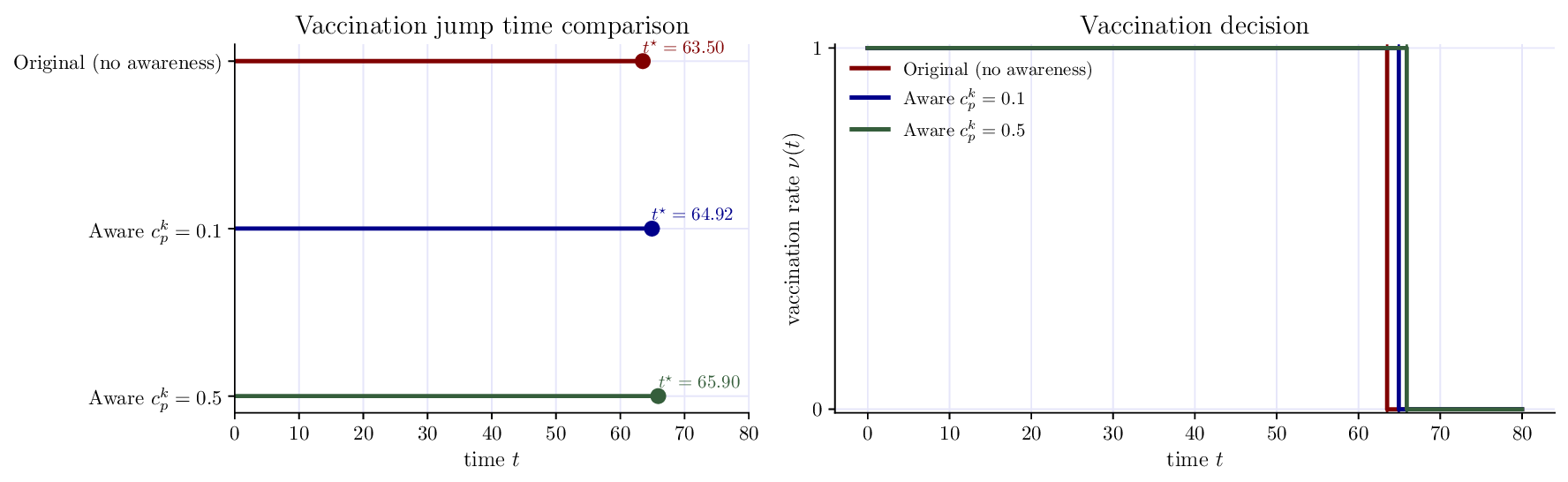}
    \caption{\textbf{Single Population Simulation:} comparison of equilibrium vaccination rate jump times (left) and vaccination rates (right, illustrating the one-jump structure) across models with different population-aware coefficient levels.}
    \label{fig:p1}
\end{figure}

\begin{figure}[ht]
    \centering
    \includegraphics[width=0.9\linewidth]{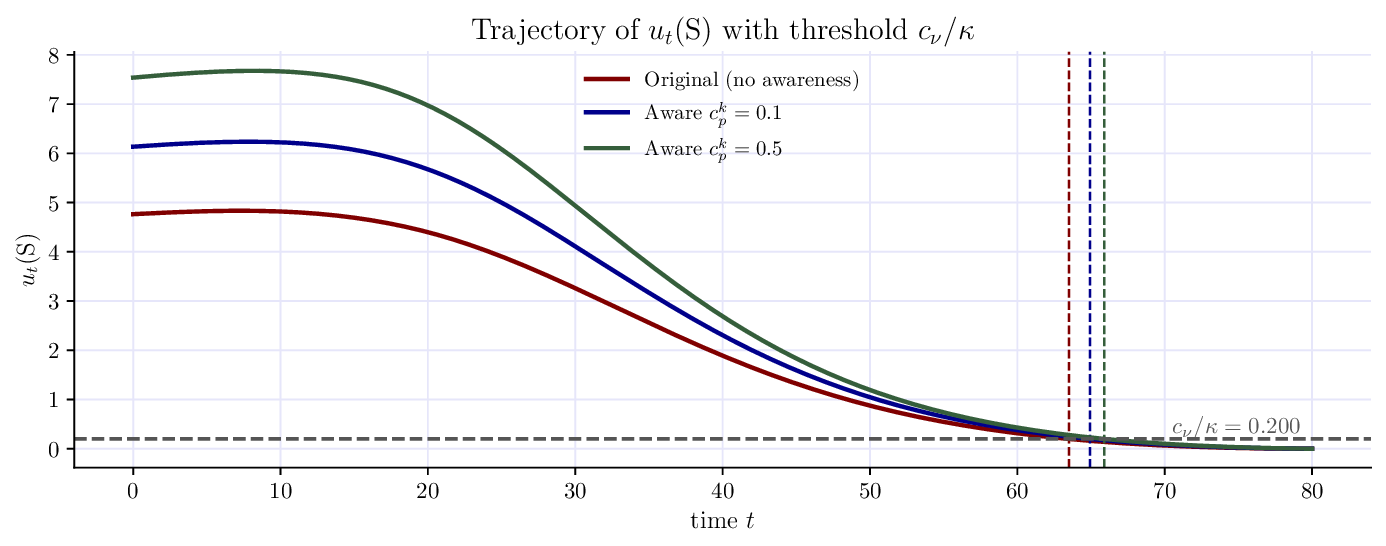}
    \caption{\textbf{Single Population Simulation:} comparison of susceptible state value function trajectories on $[0,T]$ across models with different population-aware coefficient levels. Dashed line: the vaccination jump threshold $c_\nu/\kappa$.}
    \label{fig:p2}
\end{figure}

\begin{figure}[ht]
    \centering
    \includegraphics[width=0.9\linewidth]{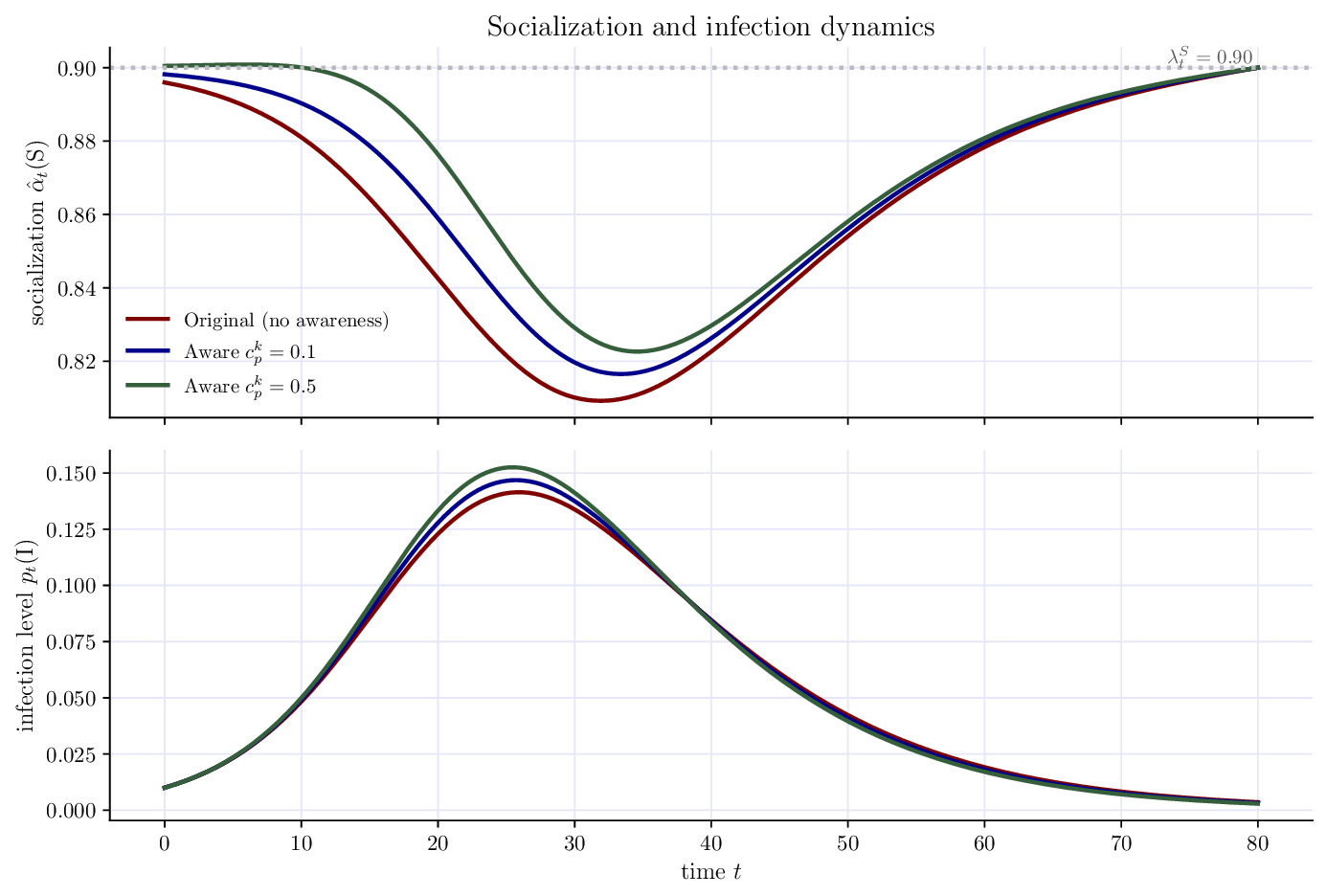}
    \caption{\textbf{Single Population Simulation:} comparison of equilibrium socialization levels $\hat{\alpha}(\rmS)$ and infection proportions $p_t(\rmI)$ across population-awareness coefficients.}
    \label{fig:p3}
\end{figure}

Figure~\ref{fig:p4} and Figure~\ref{fig:p4_infection} report the equilibrium results over a grid of different social-distancing guidelines for susceptible and infected individuals, $(\lambda^{\rmS},\lambda^{\rmI}) \in [0.3,0.9]^2$. In Figure~\ref{fig:p4}, the left panel displays the equilibrium vaccination jump time and shows that, as the social-distancing guideline becomes more permissive (that is, as $\lambda^{\rmS}$ and $\lambda^{\rmI}$ increase), individuals perceive a higher infection risk and therefore, continue vaccinating for a longer time. The right panel presents a heatmap of the minimum susceptible socialization level over $[0,T]$, namely the most protective socialization level adopted over the time horizon ($\min_t \alpha_t(\rmS)$). We see that, $\min_t \alpha_t(\rmS)$ is chosen closer to $\lambda^{\rmS}$, but decreases to lower when the social-distancing guideline for infected individuals becomes more permissive (i.e. when $\lambda^{\rmI}$ is higher). This emphasizes the protective behavior of susceptible individuals when they perceive themselves at further risk.

Figure~\ref{fig:p4_infection} shows the infection peak time in the left panel and the infection peak proportion in the right panel. Interestingly, both the peak time and proportion appear to be driven primarily by the social-distancing guidelines given to infected individuals. For example, when $\lambda^{\rmI}$ is around $0.4$, infection suppression occurs for nearly all values of $\lambda^{\rmS}$. In contrast, when $\lambda^{\rmI}$ is around $0.7$, the government must also choose $\lambda^{\rmS}$ below approximately $0.6$ to achieve infection suppression. Since susceptible individuals constitute the majority of the population in the early stage of an epidemic, imposing strong restrictions on their social activity may be particularly costly from the policy-maker's perspective. These findings therefore underscore the importance of promptly quarantining infected individuals during the initial phase of an epidemic outbreak.

\begin{figure}[ht]
    \centering
    \begin{minipage}[t]{0.48\textwidth}
        \centering
        \includegraphics[width=\textwidth]{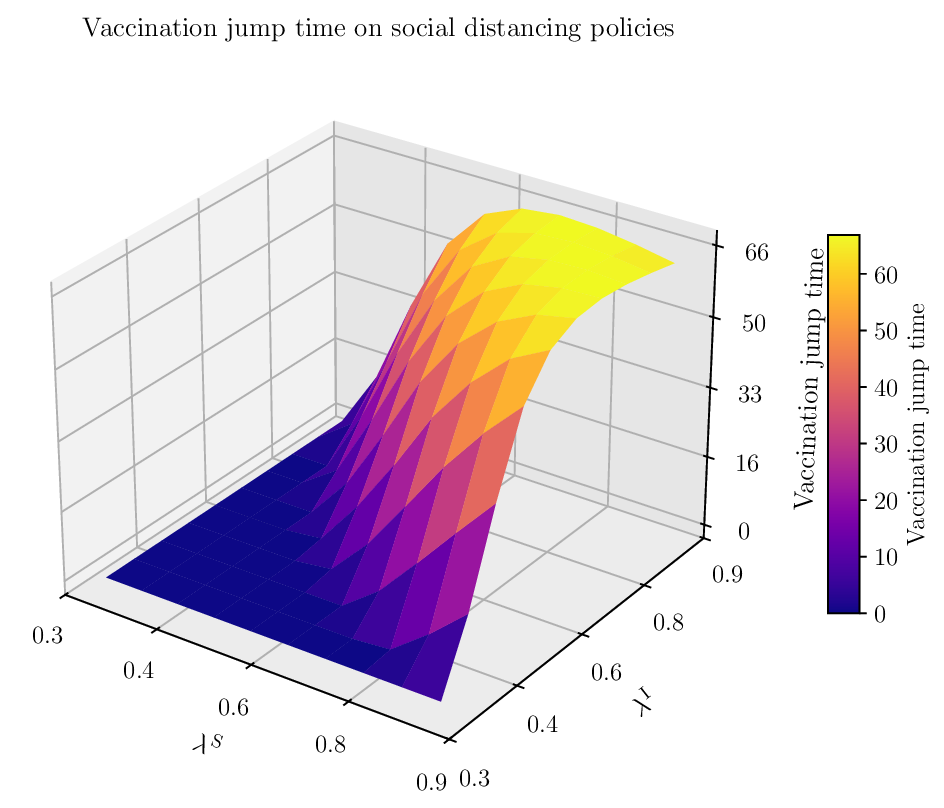}
    \end{minipage}
    \hfill
    \begin{minipage}[t]{0.48\textwidth}
        \centering
        \includegraphics[width=\textwidth]{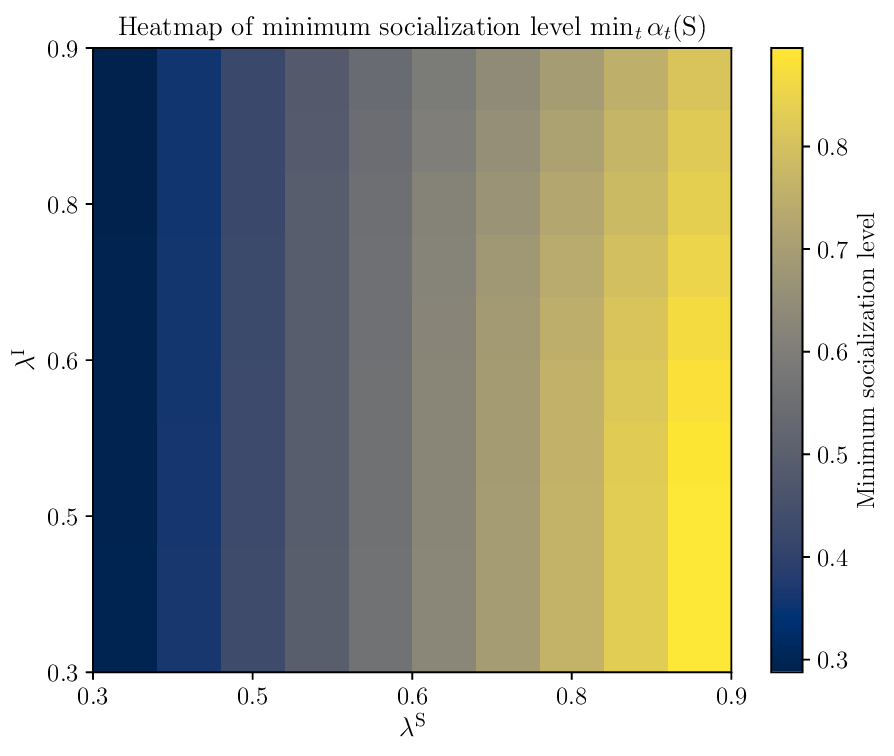}
    \end{minipage}
    \caption{\textbf{Single Population Simulation:} Grid-search comparison over the social-distancing policy space $(\lambda^{\rmS}, \lambda^{\rmI}) \in [0.3,0.9]^2$. \textbf{Left}: the vertical axis shows the equilibrium vaccination jump time. \textbf{Right}: the heatmap shows the minimum socialization level over $[0,T]$, i.e., $\min_t \alpha_t(\rmS)$.}
    \label{fig:p4}
\end{figure}

\begin{figure}[H]
    \centering
    \begin{minipage}[t]{0.48\textwidth}
        \centering
        \includegraphics[width=\textwidth]{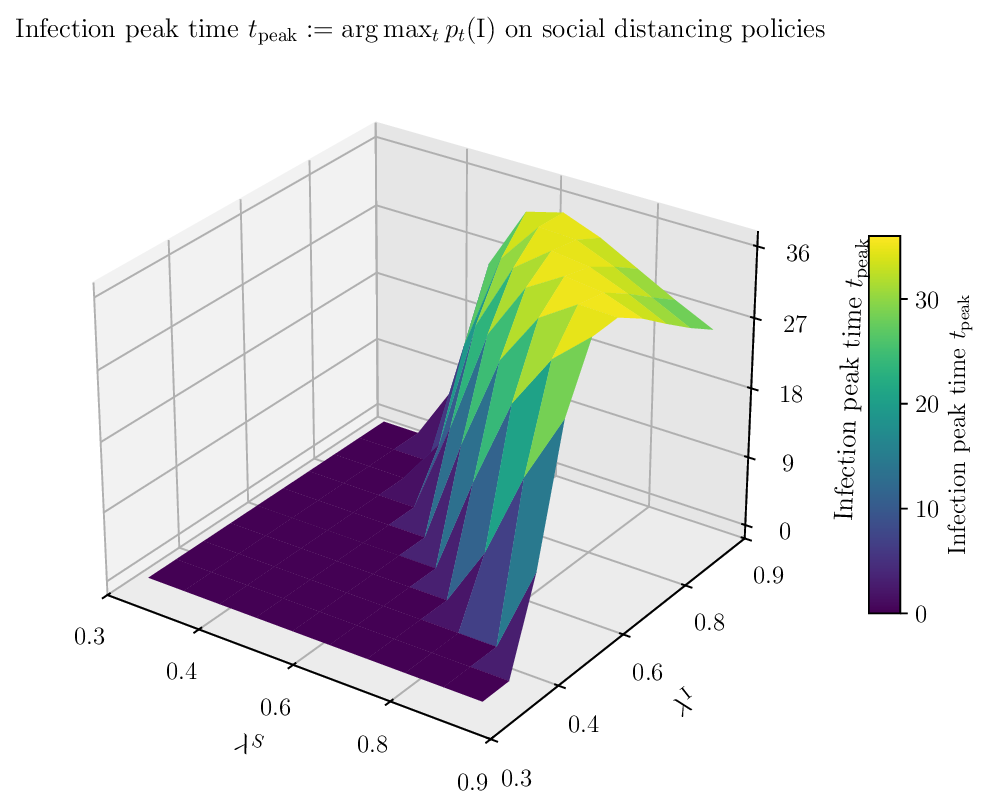}
    \end{minipage}
    \hfill
    \begin{minipage}[t]{0.48\textwidth}
        \centering
        \includegraphics[width=\textwidth]{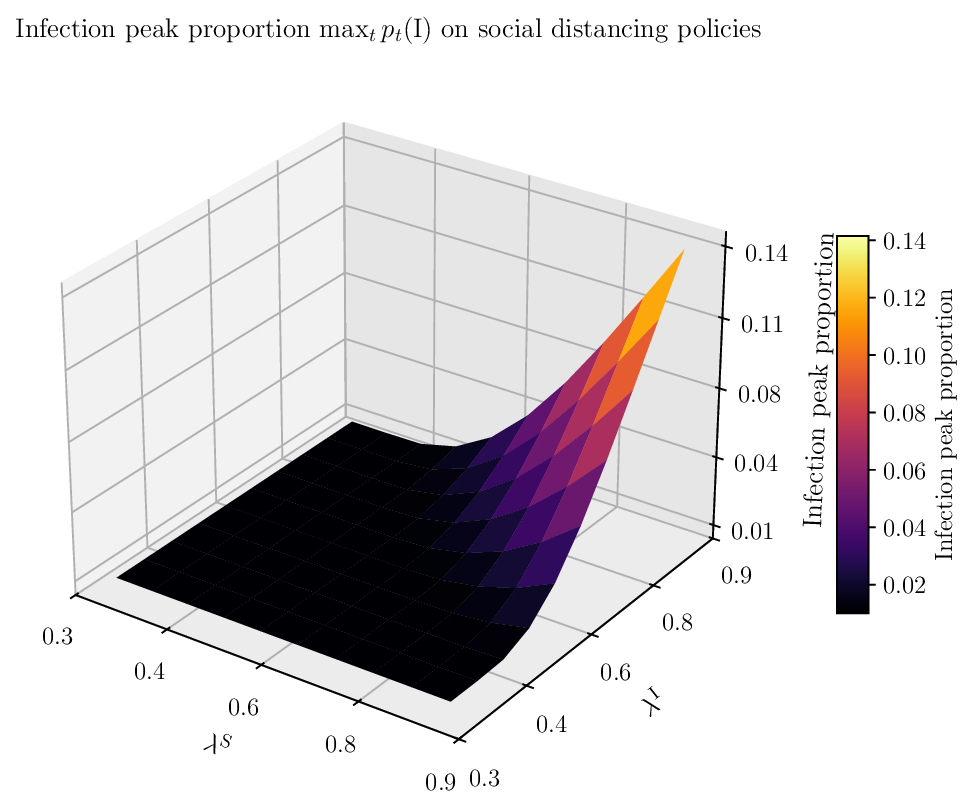}
    \end{minipage}
    \caption{\textbf{Single Population Simulation:} Grid-search comparison over the social-distancing policy space $(\lambda^{\rmS}, \lambda^{\rmI}) \in [0.3,0.9]^2$. Left: the vertical axis shows the infection peak time, i.e., $\arg\max_t p_t(\rmI)$. Right: the vertical axis shows the infection peak proportion, i.e., $\max_t p_t(\rmI)$.}
    \label{fig:p4_infection}
\end{figure}

\subsection{Real-data Informed Applications}
The second numerical experiment is conducted on the baseline model proposed in Section~\ref{sec:mfg_res} with parameters informed by the survey-based calibration in~\cite{liu2025incorporatingauthorityperceptioneconomic}, where a two-wave online survey in Illinois was used to capture population heterogeneity in health-seeking behavior across income and authority-perception groups. In the prior study~\cite{liu2025incorporatingauthorityperceptioneconomic}, the authors analyze a multi-population MFG epidemic model with \textit{quadratic} vaccination costs in which the population is stratified by both economic status (\textit{low}-, \textit{middle}-, and \textit{high-income}) and authority perception (\textit{follower} or \textit{indifferent}), resulting in six distinct groups. For clarity and tractability, we restrict attention to the three economic groups (\textit{low}-, \textit{middle}-, and \textit{high-income}) and aggregate parameters within each income category by taking group-wise averages. The parameter values and contact matrix are reported in Table~\ref{tab:param_real_implication} and Table~\ref{tab:contact_matrix_real_implication} respectively. We emphasize that $c_{\rmI}$ and $c_\nu$ are taken to be group-specific to reflect higher infection and vaccination costs for lower-income individuals due to health insurance access or simply burdens associated with obtaining these services (such as costs of traveling or taking time-off work). For vaccination-related parameters $(c_{\nu}, \kappa)$, since our proposed model introduces a linear vaccination cost, we preserve their relative magnitudes across income groups and apply a rescaling to ensure consistency with the model formulation and the existence of solutions to the coupled FBODE system. Under this calibrated setting, we examine the effects of two social-distancing guidelines and two vaccination-cost regimes on the induced epidemic dynamics.

\begin{table}[htbp]
\centering
\renewcommand{\arraystretch}{1.25}
\begin{tabular}{|l l l|}
\hline
$T = 80$, $\Delta t = 0.016$
& $K = 3$
& $m = (0.3224, 0.3164, 0.3612)$ \\

$\beta = (0.40,\,0.35,\,0.30)$ 
& $\kappa = (0.003,\,0.003,\,0.003)$ 
& $\gamma = (1/7,\,1/7,\,1/7)$ \\

$c_\nu = (0.015,\,0.013,\,0.009)$
& $c_{\mathrm{I}} = (1.05,\,1.00,\,0.80)$
& $c_\lambda = (1.0,\,1.0,\,1.0)$ \\

$\pi_0(\rmS) = (0.99,0.99,0.99)$
& $\pi_0(\rmI) = (0.01,0.01,0.01) $
& $\pi_0(\rmR) = (0,0,0)$ \\

\hline
\end{tabular}
\caption{Parameters of data-informed multi-population experiment. The vectors are presented in order: low-, middle-, high-income.}
\label{tab:param_real_implication}
\end{table}

\begin{table}[htbp]
\centering
\begin{tabular}{c ccc}
\toprule
 & low & middle & high \\
\midrule
low    & 1     & 0.925 & 0.925 \\
middle & 0.925 & 1     & 0.925 \\
high   & 0.925 & 0.925 & 1     \\
\bottomrule
\end{tabular}
\caption{Contact matrix}
\label{tab:contact_matrix_real_implication}
\end{table}

Figure~\ref{fig:p5} compares the baseline social-distancing guideline 
($\lambda_t^{k,\rmS}=\lambda_t^{k,\rmI}=\lambda_t^{k,\rmR}=0.9$),
with an alternative policy that imposes stricter distancing on infected individuals ($\lambda_t^{k,\rmI}=0.6$).
The figure presents the population density flow for states $\rmS$, $\rmI$, and $\rmR$, together with the corresponding equilibrium socialization levels and vaccination rates. From a public health policy perspective, the stricter social-distancing guideline suppresses the outbreak more effectively; as a result, individuals under this policy choose higher equilibrium socialization levels and more interestingly, do not vaccinate. Across income groups, the infection proportion is highest in the low-income class, followed by the middle-income class, and then the high-income class. Moreover, lower-income groups prefer lower socialization levels in exchange for a shorter vaccination period, reflecting the effects of the higher vaccination costs they incur.

Figure~\ref{fig:p6} compares the baseline income-group-specific vaccination-cost structure with a regime of uniformly low vaccination costs. We find that lower vaccination costs encourage all groups to vaccinate for a longer period, with the low-income group exhibiting the largest increase in vaccination duration. As vaccination is sustained for longer durations across all groups, individuals perceive themselves to be better protected and therefore choose higher equilibrium socialization levels (especially after vaccinations). This in turn leads to lower infection proportions.

\begin{figure}[ht]
    \centering
    \includegraphics[width=0.77\linewidth]{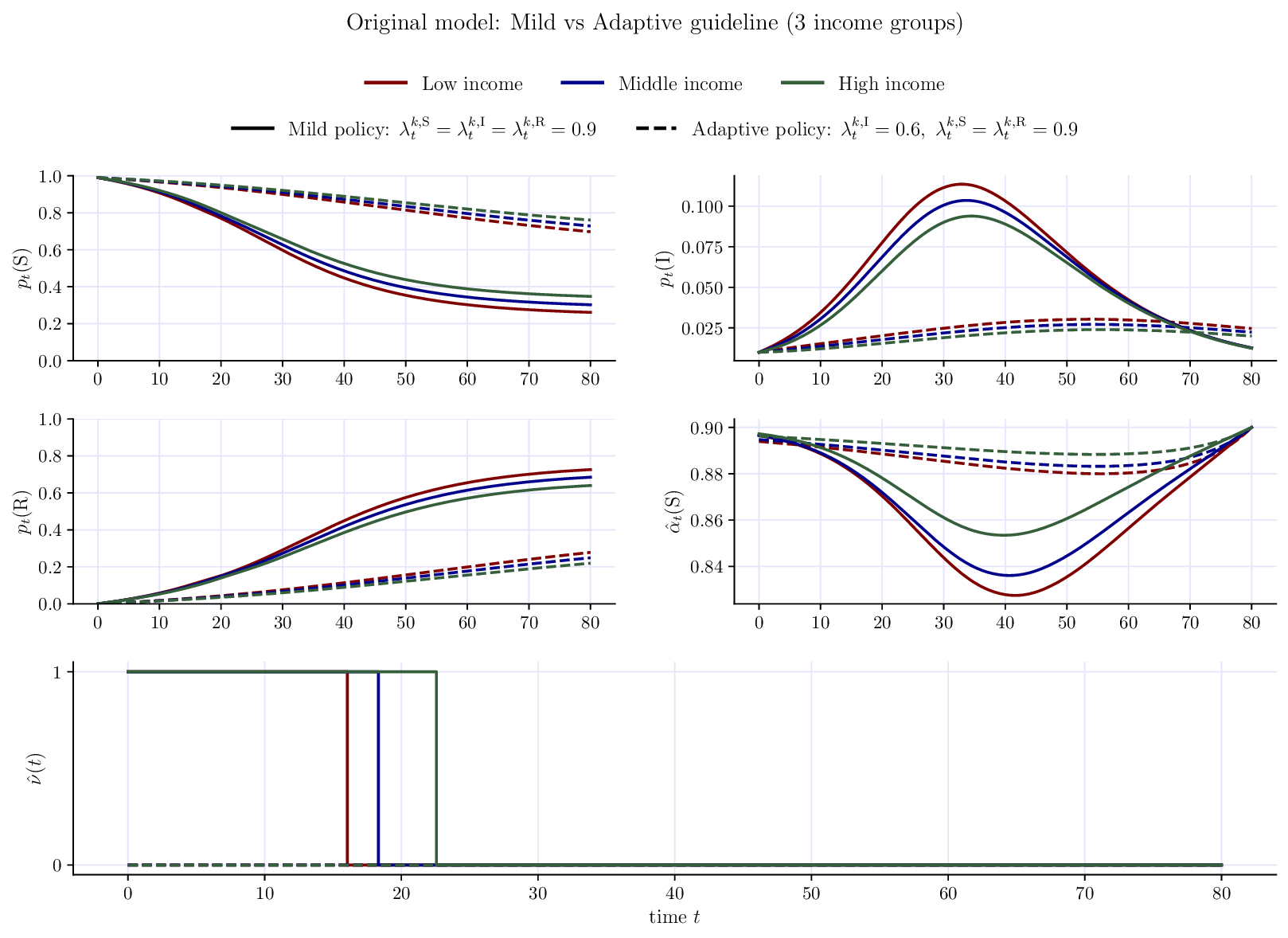}
    \caption{\textbf{Real-data informed application:} Comparison of equilibrium outcomes under a mild social-distancing guideline and an adaptive social-distancing guideline that prescribes a lower socialization level for infected individuals.}
    \label{fig:p5}
\end{figure}

\begin{figure}[H]
    \centering
    \includegraphics[width=0.77\linewidth]{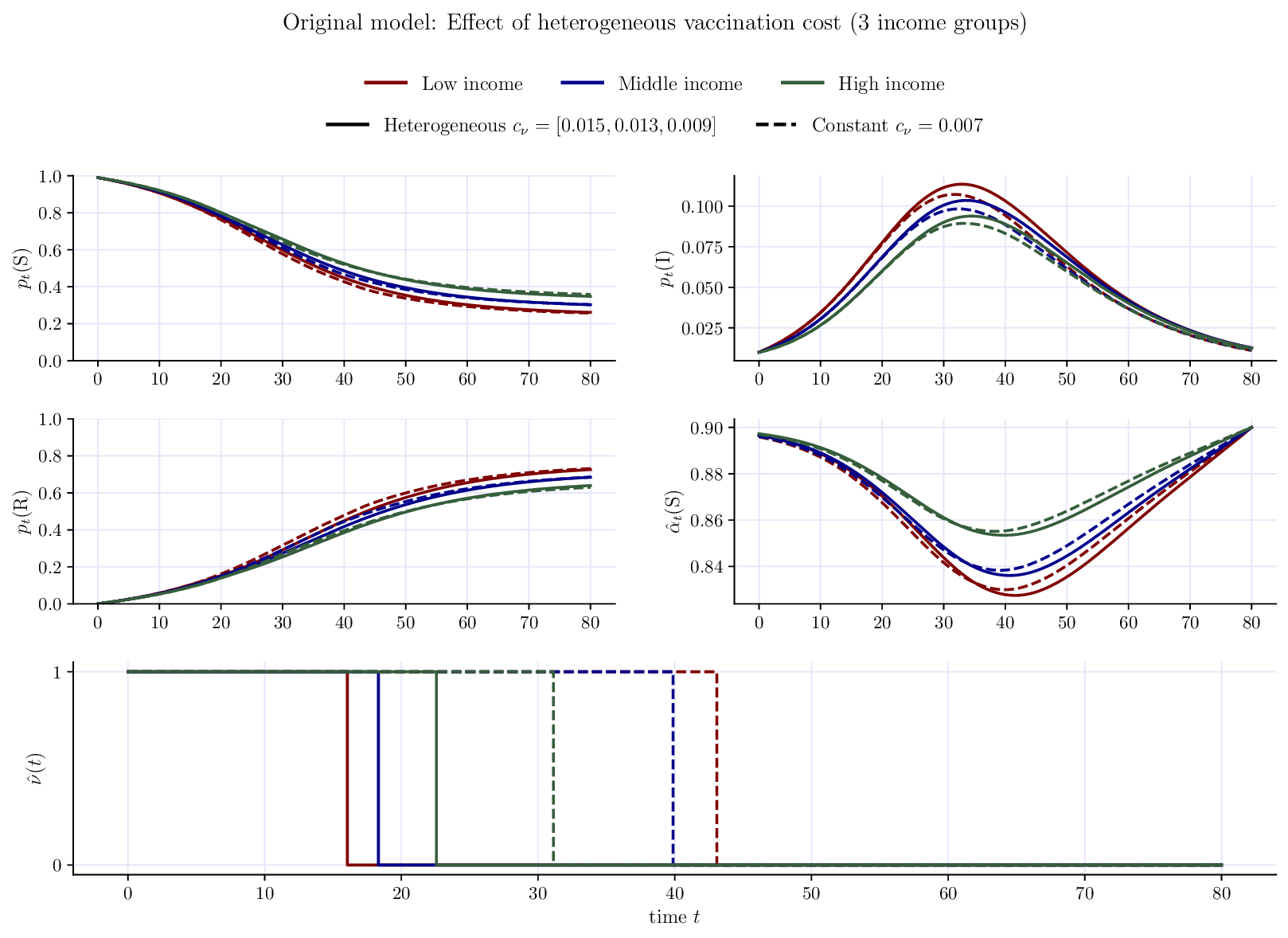}
    \caption{\textbf{Real-data informed application:} Comparison of equilibrium outcomes under the income group based vaccination cost regime and an uniformly low vaccination cost regime for all groups.}
    \label{fig:p6}
\end{figure}

\section{Conclusion}
In this paper, we propose and study an extended MFG framework that captures adaptive human responses during an epidemic outbreak. In particular, we incorporate both socialization and vaccination decisions into the model, together with a \textit{linear vaccination cost structure} that is more realistic in many practical settings. The linear cost structure results in a bang-bang type of vaccination decisions with that is interpreted as individuals choosing to vaccinate until a time decided by model parameters. We further endogenize population-awareness through infection information in the cost function of individuals, yielding a richer behavioral framework in which individuals respond not only to their own health status and direct control costs, but also to the population-level infection burden.

Our theoretical contributions include a characterization of the MFNE through an FBODE system for both the baseline model and its population-awareness extension. For the baseline model, we prove that the equilibrium vaccination rate exhibits an at-most one-jump bang-bang structure: individuals vaccinate at the maximal rate up to a critical time and stop thereafter, with the duration determined by the vaccination cost-to-efficacy ratio. We also establish an existence result for the associated FBODE system. For the population-awareness extension, we derive conditions under which the same at-most one-jump bang-bang structure is preserved. In particular, this remains true when the infection cost remains stronger than the awareness cost coefficient.

Our numerical studies support the theoretical findings on at-most one-jump bang-bang structure of vaccination rates at equilibrium, and provide several biologically meaningful insights. In particular, the experiments highlight the importance of promptly quarantining infected individuals during the early stage of an epidemic outbreak. They also reveal a trade-off between vaccination and socialization decisions, with the balance between these two protective behaviors depending on the weight assigned to population-awareness in the cost. In the real data informed multi-population application with different socioeconomic groups, the simulations further show that socioeconomic differences can affect equilibrium responses and the resulting epidemic trajectories.

Future research directions include extending the framework to incorporate policy-makers decision-making through Stackelberg game formulations, in order to better understand the interaction between epidemic mitigation interventions and strategic behavior in large populations. Another natural direction is to consider alternative forms of population-awareness endogenization, such as delayed or noisy awareness modeled as a separate dynamical process.

\bibliographystyle{plain}
\bibliography{ref}

\appendix
\section*{Appendix}

\section*{Proof of Lemma~\ref{lemma:sub_wellposedness}}
\label{sec:appendix_sub_wellposedness}
Now the vaccination posits as a exogenous parameter of the sub-FBODE system, we recast the sub-FBODE as a fixed-point problem on the density flow. We begin by remarking that the corresponding FBODE system still solves the same multi-population mean field game problem but with socialization level as the single control. Also by separability of the socialization and vaccination levels in Hamiltonian, the expression of the new socialization under equilibrium will not change. Therefore, the inducing FBODE system is~\eqref{eq:fbode} with $\hat{\nu}$ replaced by $\tilde{\nu}$, and we provide its well-posedness below.

In particular, we construct a fixed-point operator associated with the FBODE and show that it is a contraction when the terminal horizon $T$ is sufficiently small. First we note that both $u_t(\rmI)$ and $u_t(\rmR)$ are explicitly solvable, where $$u_t^{k}(\rmI) = \frac{c_{\rmI}^{k}}{\gamma^{k}}(1-e^{-\gamma^{k}(T-t)}) \quad \text{ and }\quad u_t^k(\rmR) = 0 \quad , t \in [0,T].$$ This simplifies the derivation of bounds for value functions, as specified later. We consider the state density flow $p := (p_t^{k}(\rmS),p_t^{k}(\rmI),p_t^{k}(\rmR))_{k \in [K], t \in [0,T]}$ and value function $u := (u_t^{k}(\rmS),u_t^{k}(\rmI),u_t^{k}(\rmR))_{k \in [K], t \in [0,T]}$ as two vector processes. Using the forward and backward components of the FBODE, we first fix $p$ flow and solve the backward HJB equations to obtain $u$; and by fixing these $u$'s, we solve the forward KFP equations to obtain $\tilde{p}$. This procedure is described as $p \mapsto h_1(p)=u \mapsto h_2(u) = \tilde{p}$. Let $h:=h_2 \circ h_1$, we want to show $h$ is a contraction mapping i.e. for $p^{1} = (p^{1,k}_{t}(e))_{t\in[0,T],k\in[K],e\in E}$ and $p^{2} = (p^{2,k}_{t}(e))_{t\in[0,T],k\in[K],e\in E}$, 
    $$\left\|h\left(p^1\right)-h\left(p^2\right)\right\|_T \leq C \left\|p^1-p^2\right\|_T$$ where $C<1$ and $\|x\|_T:=\sup_{t\in[0,T]}\|x_t\|_2$.
First define $\mathcal{M}:=C([0,T];\Delta_E^K)$ equipped with the supremum norm $\|\cdot\|_{T}$. Since $\Delta_E^K$ is closed in
$\mathbb R^{3K}$, the space $\mathcal{M}$ is a closed subset of
$C([0,T];\mathbb R^{3K})$, hence complete. There is $h(\mathcal{M}) \subset \mathcal{M}$.
We introduce the notation that will be used throughout the remainder of the proof until the existence theorem~\ref{thm:fbode_wellposedness}.
Define
$$
\underline{\gamma}:=\min_k\gamma^k,\quad \bar{\gamma}:=\max_k\gamma^k,
$$
$$
\underline{c}:=\min_k\{c_\nu^k,c_{\rmI}^k,c_\lambda^k\},\quad
\bar{c}:=\max_k\{c_\nu^k,c_{\rmI}^k,c_\lambda^k\},\quad
\bar{\beta}:=\max_k\beta^k,\quad \bar{\kappa}:=\max_k\kappa^k.
$$
Let
$$
\bar W:=\max_{k,l\in[K]} w(k,l)m^l,\quad
\bar Z:=\max_{k\in[K]}\sum_{l\in[K]}w(k,l)m^l.
$$

\noindent\textbf{Step 0: Inequality tools. }
    We state and prove (if necessary) some boundedness results that will be used in the proof context. For aggregate function, consider $Z_{t}^{1,k}$ and $Z_{t}^{2,k}$, we have
        \begin{equation}
            \label{eq:bdd_delta_Z}
            \left|\Delta Z_{t}^{k}\right| = \left|Z_{t}^{1,k} - Z_{t}^{2,k}\right| = \Big| \sum_{l \in[K]} w(k, l) \lambda_t^{l, \rmI} m^l \Delta p_t^l(\rmI) \Big|\leq\bar{W} \sum_{l \in [K]}\left|\Delta p_t^{l}(\rmI)\right|.
        \end{equation} We also observed the boundedness of aggregates $\left|Z_t^k\right| \leq \sum_{l \in[K]} w(k, l) m^l$ by using the fact that state density flow is bounded by 1.
    The value function is bounded using the cost functional form, that for $t \in [0,T], k \in [K], e \in E$, $\left|u_t^k(e)\right| \leq 2\bar{c} T =: \bar{U}$.
    We assume $|\hat{\alpha}_{t}^{k}(e)|\leq 1$ for simplicity for all $t \in [0,T], k \in [K], e \in E$. Consider $\hat{\alpha}_t^{1,k}$ and $\hat{\alpha}_t^{2,k}$, by adding and subtracting the cross-terms, it follows
        \begin{equation}
            \label{eq:bdd_delta_alpha}
            \begin{aligned}
|\Delta \hat{\alpha}_t^k(\rmS)| \leq & \frac{\beta^k}{2 c_\lambda^k}\big(|Z_t^{1, k}||\Delta u_t^k(\rmS)|+|u_t^{2, k}(\rmS)||\Delta Z_t^k|+|Z_t^{1, k}||\Delta u_t^k(\rmI)|+|u_t^{1, k}(\rmI)||\Delta Z_t^k|\big) \\
\leq & \frac{\bar{\beta}}{\ubarc} \bar{U}|\Delta Z_t^k|+\frac{\bar{\beta}}{2 \ubarc} \bar{Z}|\Delta u_t^k(\rmS)|+\frac{\bar{\beta}}{2 \ubarc} \bar{Z}|\Delta u_t^k(\rmI)| \\
\leq & \frac{\bar{\beta}}{\ubarc} \bar{U} \bar{W} \sum_{l \in[K]}|\Delta p_t^l(\rmI)|+\frac{\bar{\beta}}{2 \ubarc} \bar{Z}|\Delta u_t^k(\rmS)|.
\end{aligned}
        \end{equation}
    
    \noindent\textbf{Step 1: Contraction boundedness of $h_1$.}
    We start with two processes $p^{1} = (p^{1,k}_{t}(e))_{t\in[0,T],k\in[K],e\in E}$ and $p^{2} = (p^{2,k}_{t}(e))_{t\in[0,T],l\in[K],e\in E}$, and consider $\Delta p = p^1 - p^2$. For state $\rmS$, by chain rule and plugging in the dynamics of value function and finally by applying Young's inequality, we have 
$$\begin{aligned}
        &\hskip-2mm\frac{d}{d t}\|\Delta u_t(\rmS)\|^2 =2 \Delta u_t(\rmS) \cdot \Delta \dot{u}_t(\rmS) \\
        &=\sum_k 2\Delta u_t(\rmS)\Big[\beta^k\Big\{\Delta \hat{\alpha}_t^k(\rmS) Z_t^{1, k}\big(u_t^{1, k}(\rmS)-u_t^{1, k}(\rmI)\big)+\hat{\alpha}_t^{2,k}(\rmS)\Delta Z_t^{k}\big(u_t^{1, k}(\rmS)-u_t^{1, k}(\rmI)\big)\\ &\hspace{1cm}+\hat{\alpha}_t^{2,k}(\rmS)Z_t^{2,k}\big(\Delta u_t^{k}(\rmS)-\Delta u_t^{k}(\rmI)\big)\Big\} - c_\lambda^k \Delta \hat{\alpha}_t^k(\rmS)\big(\hat{\alpha}_t^{1, k}(\rmS)+\hat{\alpha}_t^{2, k}(\rmS)-2 \lambda_t^{k, \rmS}\big) \\ & \hspace{1cm}+ \big((\kappa^{k}u_{t}^{1,k}(\rmS)-c_{\nu}^{k})\mathds{1}_{t < t_1^k} - \big(\kappa^{k}u_{t}^{2,k}(\rmS)-c_{\nu}^{k}\big)\mathds{1}_{t < t_1^k}\big) \Big]\\
        &\leq \sum_{k\in[K]} 2 \Delta u^k_t(\rmS)\Big\{ [2 \bar{\beta} \bar{Z} \bar{U}+2\bar{c}]|\Delta \hat{\alpha}_t^k(\rmS)|+(\bar{\beta} \bar{Z}+\bar{\kappa})|\Delta u_t^k(\rmS)|+ 2\bar{U}|\Delta Z_t^{k}|\Big\} \\
        &\leq \sum_{k\in[K]} 2|\Delta u^k_t(\rmS)|\Big\{\big([2 \bar{\beta} \bar{Z} \bar{U}+2\bar{c}] \bar{U} \bar{W} \frac{\bar{\beta}}{\ubar{c}} + 2\bar{U}\bar{W}\big) \sum_{l \in[K]}|\Delta p_t^l(\rmI)|\\&\hspace{1cm}+\big(\bar{\kappa}+[2 \bar{\beta} \bar{Z} \bar{U}+2\bar{c}]\frac{\bar{\beta}}{2\ubar{c}}\bar{Z}+\bar{\beta}\bar{Z}\big) |\Delta u_t^k(\rmS)|\Big\}
        \\&\leq d_1^{u}\sum_{k\in[K]}|\Delta u_t^{k}(\rmS)|^2 + d_2^{u}\sum_{k\in[K]}|\Delta p_t^{k}(\rmI)|^2,
\end{aligned}$$
where $d_1^{u} := [2 \bar{\beta} \bar{Z} \bar{U}+2\bar{c}]\frac{\bar{\beta}}{\ubarc}\bar{U}\bar{W}+2\bar{U}\bar{W}+2\bar{\kappa}+2\bar{\beta}\bar{Z}+[2 \bar{\beta} \bar{Z} \bar{U}+2\bar{c}]\bar{Z}\frac{\bar{\beta}}{\ubarc}$ and $d_2^u := \big([2 \bar{\beta} \bar{Z} \bar{U}+2\bar{c}] \bar{U} \bar{W} \frac{\bar{\beta}}{\ubar{c}} + 2\bar{U}\bar{W}\big)K$. The first and second inequality used the results from step 0 and the third inequality used Young's inequality. For the value functions on state I and R, they have explicit solutions as stated above, so we trivially have $\frac{d}{d t}\left\|\Delta u_t(\rmI)\right\|^2 = \frac{d}{d t}\left\|\Delta u_t(\rmR)\right\|^2 = 0$. Combine the three states to have
$$\frac{d}{d t}\left\|\Delta u_t\right\|^2 = \frac{d}{d t}\left(\left\|\Delta u_t(\rmS)\right\|^2+\left\|\Delta u_t(\rmI)\right\|^2+\left\|\Delta u_t(\rmR)\right\|^2\right)\leq A_1\|\Delta u_t\|^2 + A_2\|\Delta p_t\|^2.$$
Here $A_1 := d_1^{u}$ and $A_2 := d_2^u$. Then apply Gr\"{o}nwall's inequality to have
    $$\left\|\Delta u_t\right\|^2 \leq A_2 \int_t^T e^{A_1(t-s)}\left\|\Delta p_s\right\|^2 d s \leq A_2 e^{A_{1}T} \int_0^{T}\|\Delta p_s\|^2ds.$$

\noindent\textbf{Step 2: Contraction boundedness of $h_2$.}
For the two processes $\tilde{p}^1 := h(p^1)$ and $\tilde{p}^2 := h(p^2)$, denote $\Delta \tilde{p} = \tilde{p}^1 - \tilde{p}^2$. First consider state $\rmS$, by chain rule and plugging in the dynamics of state density flow and finally by applying Young's inequality, we have
$$\begin{aligned}
        \frac{d}{dt} \|\Delta \tilde{p}_t(\rmS)\|^2 &= 2\Delta \tilde{p}_t(\rmS)\cdot\Delta \dot{\tilde{p}}_t(\rmS) \\ & \hspace{-1.8cm}=
\sum_{k\in[K]} 2 \Delta \tilde{p}_t^k(\rmS) \Big[\sum_{l \in[K]} w(k, l) \lambda_t^{l, \rmI} m^l\big(-\beta^k \hat{\alpha}_t^{1,k}(\rmS) \tilde{p}_t^{1,l}(\rmI) \tilde{p}_t^{1,k}(\rmS)+\beta^{k} \hat{\alpha}_t^{2, k}(\rmS) \tilde{p}_t^{2,l}(\rmI) \tilde{p}_t^{2, k}(\rmS)\big) -\kappa^k \tilde{\nu}_t^{k} \Delta \tilde{p}_t^k(\rmS)\Big] \\ &\hspace{-1.8cm}\leq 
\sum_{k\in[K]} 2|\Delta \tilde{p}_t^k(\rmS)|\Big[\bar{W} \bar{\beta} \sum_{l\in[K]}\big(|\Delta \tilde{p}_t^k(\rmS)|+|\Delta \tilde{p}_t^l(\rmI)|+|\Delta \hat{\alpha}_t^k(\rmS)|\big)+\bar{\kappa} |\Delta \tilde{p}_t^k(\rmS)|\Big]
\\ &\hspace{-1.8cm}\leq 
\sum_{k\in[K]} 2|\Delta \tilde{p}_t^k(\rmS)|\Big[\big(K\bar{W}\bar{\beta}+\bar{\kappa}\big)|\Delta \tilde{p}_t^k(\rmS)|+\big(K\bar{W}^2\bar{U}\frac{\bar{\beta}^2}{\ubarc}+\bar{W}\bar{\beta}\big)\sum_{l \in [K]}|\Delta \tilde{p}_t^l(\rmI)| + K\bar{W}\bar{Z}\frac{\bar{\beta}^2}{2\ubarc}|\Delta u_t^k(\rmS)|\Big]
\\ &\hspace{-1.8cm} \leq \sum_{k\in[K]} \Big(3K\bar{W}\bar{\beta}+2\bar{\kappa}+K^2\bar{W}^2\bar{U}\frac{\bar{\beta}^2}{\ubarc}+K\bar{W}\bar{Z}\frac{\bar{\beta}^2}{2\ubarc}\Big)|\Delta \tilde{p}_t^k(\rmS)|^2 \\ & \hspace{2cm}+\big(K^2\bar{W}^2\bar{U}\frac{\bar{\beta}^2}{\ubar{c}}+K\bar{W}\bar{\beta}\big)|\Delta \tilde{p}_t^k(\rmI)|^2 + K \bar{W} \bar{Z} \frac{\bar{\beta}^2}{2 \ubar{c}}|\Delta u_t^{k}(\rmS)|^2,
    \end{aligned}$$
For state I and R, it similarly follows
$$\begin{aligned}
    \frac{d}{dt} \left\|\Delta \tilde{p}_t(\rmI)\right\|^2 &= 2\Delta \tilde{p}_t(\rmI)\cdot\Delta \dot{\tilde{p}}_t(\rmI) \\ &\leq \sum_{k\in[K]} \big(2K\bar{W} \bar{\beta}+2K^2 \bar{W}^2 \bar{\beta}^2 \frac{\bar{U}}{\ubar{c}}+K\bar{W} \bar{\beta}^2 \frac{\bar{Z}}{2\ubar{c}}+2 \bar{\gamma}\big)\left|\Delta \tilde{p}_t^k(\rmI)\right|^2 \\ &\quad\quad\quad\quad\quad + K\bar{W} \bar{\beta} \left|\Delta \tilde{p}_t^k(\rmS)\right|^2 + K\bar{W} \bar{\beta}^2 \frac{\bar{Z}}{2\ubar{c}} \left|\Delta u_t^k(\rmS)\right|^2,  \\ \frac{d}{dt} \left\|\Delta \tilde{p}_t(\rmR)\right\|^2 &= 2\Delta \tilde{p}_t(\rmR)\cdot\Delta \dot{\tilde{p}}_t(\rmR) \\ &\leq \sum_{k\in[K]}(\bar{\gamma}+\bar{\kappa})\left|\Delta \tilde{p}_t^k(\rmR)\right|^2+ \bar{\gamma} \left|\Delta \tilde{p}_t^k(\rmI)\right|^2+\bar{\kappa}\left|\Delta \tilde{p}_t^k(\rmS)\right|^2.
\end{aligned}$$
We combine the inequalities for the three states to have
$$\begin{aligned}
    \frac{d}{d t}\left\|\Delta \tilde{p}_t\right\|^2&=\frac{d}{d t}\big(\left\|\Delta \tilde{p}_t(\rmS)\right\|^2+\left\|\Delta \tilde{p}_t(\rmI)\right\|^2+\left\|\Delta \tilde{p}_t(\rmR)\right\|^2\big)\\ &\leq d_1^p \sum_{k\in[K]}|\Delta \tilde{p}_t^k(\rmS)|^2+d_2^p \sum_{k\in[K]}|\Delta \tilde{p}_t^k(\rmI)|^2 \\ &\quad\quad\quad+d_3^p \sum_{k\in[K]}|\Delta \tilde{p}_t^k(\rmR)|^2+d_4^p \sum_{k\in[K]}|\Delta u_t^k(\rmS)|^2 \\ & \leq B_1 \|\Delta \tilde{p}_t\|^2 + B_2 \|\Delta u_t\|^2,
\end{aligned}$$
where $d_1^p:=4K\bar{W}\bar{\beta}+3\bar{\kappa}+K^2\bar{W}^2\bar{U}\frac{\bar{\beta}^2}{\ubarc}+K\bar{W}\bar{Z}\frac{\bar{\beta}^2}{2\ubarc}$, $d_2^p:= 3K^2\bar{W}^2\bar{U}\frac{\bar{\beta}^2}{\ubar{c}}+3K\bar{W}\bar{\beta}+K\bar{W} \bar{\beta}^2 \frac{\bar{Z}}{2\ubar{c}}+3 \bar{\gamma}$, $d_3^p:=\bar{\gamma}+\bar{\kappa}$, $d_4^p:=K \bar{W} \bar{Z} \frac{\bar{\beta}^2}{\ubar{c}}$; and $B_1 = \max \left\{d_1^p, d_2^p, d_3^p\right\}$, $B_2 = d_4^{p}$. Apply Gr\"{o}nwall's inequality to have $$\left\|\Delta \tilde{p}_t\right\|^2 \leq B_2 \exp \left(B_1 T\right) \int_0^T\left\|\Delta u_s\right\|^2 d s$$ Combine this with the result in step 1 to derive
$$\left\|\Delta \tilde{p}_t\right\|^2 \leq B_2 \exp \left(B_1 T\right) \int_0^T A_2 \exp(A_{1}T) \int_0^T\left\|\Delta p_s\right\|^2 d s d v$$
Take supremum over $t$ on both sides and simplify to conclude
$$\sup_{t\in [0,T]}\|\Delta\tilde{p}_t\|^2 \leq A_2 B_2 \exp(A_{1}T) \exp \left(B_1 T\right) T^2 \sup _t\|\Delta p_t\|^2 =: C_1 \sup _t\|\Delta p_t\|^2$$
Under small time horizon $T$, there is $C_1 = A_2 B_2 \exp \left((A_1 + B_1) T\right) T^2 < 1$ and $h$ is a contraction mapping. Banach Fixed Point Theorem applies, and this completes the proof for the sub-FBODE well-posedness.

\section*{Proof of Lemma~\ref{lemma:sub_continuity}}
\label{sec:appendix_sub_continuity}
We first fix a jump time $t_1$ and under the short-time condition uniquely obtain
a solution pair $(u^{t_1},p^{t_1})$ to the FBODE system.
Let $t_1^n\to t_1$ and denote the corresponding solutions by
$(u^n,p^n)$ and $(u,p)$ respectively. For $t \in [0,T]$, $k \in [K]$ and $e \in E$, we further let $\Delta p_t^{k}(e) = p^{n,k}_{t}(e) - p^{k}_{t}(e)$, $\Delta u_t^{k}(e) = u^{n,k}_{t}(e) - u^{k}_{t}(e)$, $\Delta \hat{\alpha}^{k}_t(e) = \hat{\alpha}^{n,k}(e) - \hat{\alpha}^{k}(e)$ and $\Delta Z_{t}^{k} = Z_{t}^{n,k} - Z_{t}^{k}$.

\medskip
\noindent\textbf{Step 0: (more) Inequality tools.}
For each $k$,
$$
|\Delta Z_t^k|
=
\Big|
\sum_{\ell\in[K]} w(k,l)\lambda_t^{l,\rmI} p_t^{l,n}(\rmI)m^l
-
\sum_{\ell\in[K]} w(k,l)\lambda_t^{l,\rmI} p_t^{l}(\rmI)m^l
\Big|
\leq \bar{W} \sum_{l \in[K]} \bigl| \Delta p_t^{l}(\rmI)\bigr|
$$
By the definition of $\hat\alpha_t^k$, we also obtain
$\bigl|\hat\alpha_t^{n,k}-\hat\alpha_t^{k}\bigr|
\leq \frac{\bar{\beta}\bar{U}\bar{W}}{\ubar{c}} \sum_{l \in [K]}|\Delta p_t^l(\rmI)|
+ \frac{\bar{\beta}\bar{Z}}{2\ubar{c}} |\Delta u_t^k(\rmS)|$.

\medskip
\noindent\textbf{Step 1: Estimate on the forward equations.}
Denote the jump time difference interval as $I^{n,k}:=[\min\{t_1^{n, k}, t_1^k\},$ $\max \{t_1^{n, k}, t_1^k\}]$. For the density flow on susceptible state, we apply Chain rule and Young's inequality to have

\begin{equation*}
    \begin{aligned}
\frac{d}{dt}\,\|\Delta p_t(\rmS)\|^2 &=  2\Delta p_t(\rmS) \Delta \dot{p}_t(\rmS) \\
&\hspace{-1.8cm}\leq
\sum_{k} 2 \Delta p_t^k(\rmS)\Bigl[
\bar\beta \Delta\hat\alpha_t^k(\rmS)\,Z_t^{n,k}\,p_t^{n,k}(\rmS)
+ \bar\beta \hat{\alpha}_t^k(\rmS)\,\Delta Z_t^k\,p_t^{n,k}(\rmS) \\
&\qquad\qquad\qquad\qquad\;
+ \bar\beta \hat{\alpha}_t^k(\rmS)\,Z_t^k\,\Delta p_t^k(\rmS)
+ \bar\kappa\Delta\tilde{\nu}_t^k\,p_t^{n,k}(\rmS)
+ \bar\kappa \tilde{\nu}_t^k\,\Delta p_t^k(\rmS)
\Bigr] \\[0.4em]
&\hspace{-1.8cm}\leq
\sum_{k} 2\bigl|\Delta p_t^k(\rmS)\bigr|
\Bigl[
\bar\beta\bar Z\bigl|\Delta\hat\alpha_t^k(\rmS)\bigr|
+ \bar\beta\bigl|\Delta Z_t^k\bigr|
+ \bar\beta\bar Z\,\bigl|\Delta p_t^k(\rmS)\bigr| 
+ \bar\kappa \mathds{1}_{\{t\in I^{n,k}\}}
+ \bar\kappa\,\bigl|\Delta p_t^k(\rmS)\bigr|
\Bigr] \\[0.4em]
&\hspace{-1.8cm}\leq
\sum_{k} 2\bigl|\Delta p_t^k(\rmS)\bigr|
\Bigl[
(\bar\beta\bar{Z}+\bar{\kappa})\bigl|\Delta p_t^k(\rmS)\bigr|
+ \bigl(K\tfrac{\bar{\beta}^2}{\ubar{c}}\bar{Z}\bar{U}\bar{W} + K\bar{\beta}\bar{W}\bigr)\bigl|\Delta p_t^k(\rmI)\bigr| 
+ \frac{\bar\beta^2 \bar{Z}^2}{\ubarc}\,\bigl|\Delta u_t^k(\rmS)\bigr|
+ \bar\kappa \,\mathds{1}_{\{t\in I^{n,k}\}}
\Bigr] \\[0.4em]
&\hspace{-1.8cm}\leq
\Bigl(
2\bar\beta\bar{Z} + 3\bar{\kappa}
+ K\frac{\bar\beta^2}{\ubar{c}}\bar{Z}\bar{U}\bar{W}
+ K\bar\beta\bar{W}
+ \frac{\bar\beta^2\bar{Z}^2}{\ubar{c}}
\Bigr)\sum_{k}\bigl|\Delta p_t^k(\rmS)\bigr|^2 \\
&\hspace{-1.8cm}\qquad
+
\Bigl(
K\frac{\bar\beta^2}{\ubar{c}}\bar{Z}\bar{U}\bar{W}
+ K\bar\beta\bar W
\Bigr)\sum_{k}\bigl|\Delta p_t^k(\rmI)\bigr|^2
+
\frac{\bar\beta^2\bar{Z}^2}{\ubar{c}}\sum_{k}\bigl|\Delta u_t^k(\rmS)\bigr|^2
+
\bar{\kappa} \sum_{k}\mathds{1}_{\{t\in I^{n,k}\}}.
\end{aligned}
\end{equation*}
For state I and R, we similarly have

\begin{align*}
\frac{d}{dt}\|\Delta p_t(\rmI)\|^2
&\leq
\bigl(2K \bar W \bar\beta 
+ 2K^2 \bar{W}^2 \bar{\beta}^2 \frac{\bar{U}}{c}
+ K\bar W \bar{\beta}^2 \frac{\bar{Z}}{2\ubarc}
+ 2\bar{\gamma}
\bigr)
\sum_{k} |\Delta p_t^k(\rmI)|^2 \\& \hspace{3cm}+ K\bar{W} \bar{\beta} \sum_{k} |\Delta p_t^k(\rmS)|^2 +  K\bar{W} \bar{\beta}^2 \frac{\bar{Z}}{2\ubar{c}} \sum_{k} |\Delta u_t^k(\rmS)|^2
\\
\frac{d}{dt}\|\Delta p_t(\rmR)\|^2
&\leq
\sum_{k} 2|\Delta p_t^k(\rmR)|
\Bigl[
\bar\gamma|\Delta p_t^k(\rmI)|
+ \bar{\kappa} \mathds{1}_{\{t\in I^{n,k}\}}
+ \bar{\kappa} |\Delta p_t^k(\rmS)|
\Bigr]
\\
&\hspace{-1.2cm}\leq
(\bar{\gamma} + 2\bar\kappa)\sum_k |\Delta p_t^k(\rmR)|^2
+ \bar{\gamma} \sum_k |\Delta p_t^k(\rmI)|^2
+ \bar{\kappa} \sum_k |\Delta p_t^k(\rmS)|^2
+ \bar{\kappa} \sum_k \mathds{1}_{\{t\in I^{n,k}\}}.
\end{align*}
Combine the SIR states to have
$$
\frac{d}{dt}\|\Delta p_t\|^2
\leq
B_1\|\Delta p_t\|^2
+ B_2\|\Delta u_t\|^2
+ 2\bar{\kappa} \sum_k \mathds{1}_{\{t\in I^{n,k}\}},
$$
where $B_1 := \max\{2\bar\beta\bar{Z} + 4\bar{\kappa}
+ K\frac{\bar\beta^2}{\ubar{c}}\bar{Z}\bar{U}\bar{W}
+ 2K\bar\beta\bar{W}
+ \frac{\bar\beta^2\bar{Z}^2}{\ubar{c}}, 
K\frac{\bar\beta^2}{\ubar{c}}\bar{Z}\bar{U}\bar{W}
+ 3K\bar\beta\bar{W}
+ 2K^2 \bar{W}^2 \bar{\beta}^2 \frac{\bar{U}}{c}
+ K\bar W \bar{\beta}^2 \frac{\bar{Z}}{2\ubarc}
+ 3\bar{\gamma},
\bar{\gamma} + 2\bar{\kappa}\}$, $B_2 := \frac{\bar\beta^2\bar{Z}^2}{\ubar{c}} + K\bar{W} \bar{\beta}^2 \frac{\bar{Z}}{2\ubar{c}}$.
By Gr\"{o}nwall's inequality, for any $t \in [0,T]$, we have
$$
\|\Delta p_t\|^2
\le
B_2 e^{B_1T}\int_0^T \|\Delta u_s\|^2 ds
+ 2\bar\kappa e^{B_1T} \sum_k |t_1^{n,k}-t_1^k|.
$$
This yields
$$
\sup_t \|\Delta p_t\|^2
\leq
B_2 e^{B_1T} T \sup_t\|\Delta u_t\|^2
+ 2\bar\kappa e^{B_1T} \sum_k |t_1^{n,k}-t_1^k|.
$$

\medskip
\noindent\textbf{Step 2: Estimate on the backward equations.}
Similarly, for the susceptible value function,
\begin{equation*}
    \begin{aligned}
        \frac{d}{dt}\|\Delta u_t(\rmS)\|^2 &= 2 \Delta u_t(\rmS) \cdot \Delta \dot{u}_t(\rmS) \\
&\leq
\sum_k 2|\Delta u_t^k(\rmS)|
\Bigl[
(2\bar\beta\bar{Z}\bar{U}+2\bar{c})|\Delta\hat\alpha_t^k(\rmS)|
+ (\bar{\beta}\bar{Z}+\bar{\kappa})|\Delta u_t^k(\rmS)| 
+ 2\bar{\beta}\bar{U}|\Delta Z_t^k|
+ (\bar{\kappa}\bar{U}+\bar{c}) \mathds{1}_{\{t\in I^{n,k}\}}
\Bigr] \\
&\leq D_1 \sum_k|\Delta u_t^k(\rmS)|^2 + D_2 \sum_k|\Delta p_t^k(\rmI)|^2 +  (\bar{\kappa}\bar{U}+\bar{c})\sum_k \mathds{1}_{\{t\in I^{n,k}\}},
    \end{aligned}
\end{equation*}
Here $D_1:= \frac{\bar{\beta}^2\bar{Z}^2}{\ubar{c}}\bar{U}+\frac{\bar{\beta}\bar{Z}\bar{c}}{\ubar{c}}+\bar{\beta}\bar{Z}+\bar{\kappa}+2\bar{\beta}^2\bar{U}^2\bar{Z}\bar{W}\frac{K}{\ubar{c}}+2\frac{\bar{\beta}\bar{U}\bar{c}}{\ubar{c}}\bar{W}K+2\bar{\beta}\bar{U}\bar{W}K+\bar{\kappa}\bar{U} +\bar{c}$, $D_2:=2\bar{\beta}^2\bar{U}^2\bar{Z}\bar{W}\frac{K}{\ubar{c}}+2\frac{\bar{\beta}\bar{U}\bar{c}}{\ubar{c}}\bar{W}K+2\bar{\beta}\bar{U}\bar{W}K$. Relax the bound and re-arrange to have
$$
\frac{d}{dt}\|\Delta u_t\|^2 \leq D_1 \|\Delta u_t\|^2 + D_2 \|\Delta p_t\|^2 +  (\bar{\kappa}\bar{U}+\bar{c})\sum_k \mathds{1}_{\{t\in I^{n,k}\}}
$$
Again Gr\"{o}nwall's inequality, for any $t \in [0,T]$ we have
\[
\|\Delta u_t\|^2
\le
D_2 e^{D_1T} \int_0^T \|\Delta p_s\|^2 ds
+ (\bar\kappa\bar{U}+\bar{c})e^{D_1T}
\sum_k |t_1^{n,k}-t_1^{k}|.
\]
Take supremum on left hand side and plug in the result from step 1, we have
\begin{equation*}
    \begin{aligned}
        \sup_t \|\Delta u_t\|^2 &\leq D_2 e^{D_1T}T \sup_s \|\Delta p_s\|^2 + (\bar\kappa\bar{U}+\bar{c})e^{D_1T}
\sum_k |t_1^{n,k}-t_1^{k}| \\ & \leq D_2 e^{D_1T}T \Big(B_2 e^{B_1T} T \sup_t\|\Delta u_t\|^2
+ 2\bar\kappa e^{B_1T} \sum_k |t_1^{n,k}-t_1^k|\Big) \\& \hspace{4cm}+ (\bar\kappa\bar{U}+\bar{c})e^{D_1T}
\sum_k |t_1^{n,k}-t_1^{k}|
    \end{aligned}
\end{equation*}
For small enough $T > 0$ such that $B_2 D_2 e^{(B_1 + D_1)T}T^2 < 1$, we have
$$\sup_t \|\Delta u_t\|^2 \leq \frac{2\bar{\kappa}D_2e^{(B_1+D_1)T}T + (\bar\kappa\bar{U}+\bar{c})e^{D_1T}}{1 - B_2 D_2 e^{(B_1 + D_1)T}T^2}\sum_k |t_1^{n,k}-t_1^{k}| \longrightarrow 0 \quad\text{  as  }\quad n \longrightarrow \infty.$$

\section*{Proof of Lemma~\ref{lemma:sub_one_jump}}
\label{sec:appendix_sub_one_jump}
We follow the proof of the one-jump property established earlier in~\ref{thm:charac_bang_bang} with some modifications.
First note that the value function of the state I and R still admits an explicit
solution $u^{t_1}_t(\rmI)=\frac{c_{\rmI}^k}{\gamma^k}\bigl(1-e^{-\gamma^k(T-t)}\bigr)$, $u^{t_1}_t(\rmR) = 0$, as their parameters are not vaccination-related and $u^{t_1}_t(\rmI)$ is monotonically decreasing on $[0,T]$.
The value function of the susceptible state satisfies the HJB equation
$$
\begin{aligned}
    -\dot u^{t_1,k}_t(\rmS)
&=
\inf_{\alpha\in[0,1]}
\bigl\{
c_\lambda^k(\lambda^{k,\rmS}_t-\alpha)^2
+ c_\nu^k \tilde\nu_t^k 
+ \beta \alpha Z_t^k \bigl(u^{t_1,k}_t(\rmI)-u^{t_1,k}_t(\rmS)\bigr)
+ \kappa^k \tilde\nu_t^k \bigl(u^{t_1,k}_t(\rmR)-u^{t_1,k}_t(\rmS)\bigr)
\bigr\}.
\end{aligned}
$$
By the property of infimum, taking the feasible control $\alpha=\lambda^{k,\rmS}_t$ yields
\begin{equation}
\label{eq:relax_uS_dynamics}
\begin{aligned}
        -\dot u^{t_1}_t(\rmS)
&\leq
c_\nu^k \tilde\nu_t
- \kappa^k \tilde\nu_t u^{t_1}_t(\rmS)
+ \beta^k \lambda^{k,\rmS}_t Z_t^k \bigl(u^{t_1,k}_t(\rmI)-u^{t_1,k}_t(\rmS)\bigr)
\\&\leq
c_\nu \tilde\nu_t
+ \beta^k \lambda^{k,\rmS}_t Z_t \bigl(u^{t_1,k}_t(\rmI)-u^{t_1,k}_t(\rmS)\bigr).
\end{aligned}
\end{equation}
At time $T$, since $u_T(\rmI)=u_T(\rmS)=0$, we obtain $-\dot u^{t_1}_T(\rmS) \leq c_\nu \tilde\nu_T \leq c_\nu < c_{\rmI}.$
Moreover, recall that $-\dot u^{t_1}_T(\rmI)=c_I$. Therefore it follows that
$u^{t_1}_{T-}(\rmS) < u^{t_1}_{T-}(\rmI)$. By continuity of the value functions, we claim that
$u^{t_1}_t(\rmS) < u^{t_1}_t(\rmI)$ for all $t\in(0,T]$.
Suppose by contradiction that there exists $\tau\in[0,T)$ which is the last time
such that $u^{t_1}_\tau(\rmS)=u^{t_1}_\tau(\rmI)$. Then moving from a backward direction, for $u^{t_1}_t(\rmS)$ to hit $u^{t_1}_t(\rmI)$ at time $\tau$, it is necessary that
$\dot u^{t_1}_\tau(\rmS) \leq \dot u^{t_1}_\tau(\rmI)$ by continuity. Note by its explicit solution, $\dot{u}^{t_1}_\tau(\rmI) = -c_{\rmI}^k e^{-\gamma^k(T-\tau)} \leq -c_{\rmI}^k e^{-\gamma^kT}$. On the other hand by~\eqref{eq:relax_uS_dynamics}, we have $\dot{u}_{\tau}(\rmS)\geq -c_{\nu}^k$. Under short time where $T < \min_{k}\frac{1}{\gamma^k} \log\frac{c_{\rmI}^k}{c_{\nu}^k}$, there is contradiction. Therefore we have $u^{t_1,k}_t(\rmS) < u^{t_1,k}_t(\rmI), \forall t\in[0,T), k\in[K].$ The rest of the proof follows a similar logic as theorem~\ref{thm:charac_bang_bang}.

\section*{Proof of Lemma~\ref{lemma:jump_time_continuity}}
\label{sec:appendix_jump_time_continuity}
We begin with a remark for the condition in $\mathcal{S}$ where the rate is required to be negative at the meeting point. This is to exclude the case where $A_0^k < c^k$ and tangent with $c^k$ from below at some time, and it actually stems from the fact that $\dot{u}_t^k(\rmS) < 0$ at jump time implicitly stated in proof for~\ref{lemma:sub_one_jump}. With this case excluded, meeting with $c^k$ is equivalent to $A_0^k > c^k$, and not meeting with $c^k$ is equivalent to $A_0^k < c^k$. We prove the continuity of $\tilde{t}$ on the two cases separately and consider the boundary point where $A_0^k = c^k$. We use $\delta-\epsilon$ method. Fix $A \in \mathcal{S}$, for any $\epsilon > 0$, we show there exists $\delta > 0$ such that for all $B \in \mathcal{S}$, $\|A - B\|_{T} < \delta$ implies $\|\tilde{t}(B) - \tilde{t}(A)\|_{\infty} < \epsilon$. It suffices to show the continuity of $\tilde{t}_k$ on $A$ for $k \in [K]$ since we could later pick $\delta := \min_k \delta_k$. Also observe that $\sup_{t \in [0,T]} |A_t^k - B_t^k| \leq \|A-B\|_{T}$, which will be frequently used in context. \\
\noindent\textbf{Case 1 ($A_0^k < c^k$, i.e. $\tilde{t}_k(A) = t_k = 0$). } By continuity of $A^k$ on $t$ and compactness of $[0,T]$, its minimum exists and we let $d_k := \min_{t \in [0,T]}(c^k - A_t^k) > 0$. Let $\delta_k := d_k/2$. If $\|A-B\|_T \leq \delta_k$, then for any $t \in [0,T]$, we have 
$$B_t^k \leq A_t^k+\left|B_t^k-A_t^k\right|<A_t^k+\delta_k \leq c^k-\frac{d_k}{2}<c^k.$$ Therefore we have $\tilde{t}_k(B) = \tilde{t}_k(A) = 0$ and it trivially follows the desired result. \\
\noindent\textbf{Case 2 ($A_0^k > c^k$, i.e. $\tilde{t}_k(A) = t_k \in (0,T)$). } Fix $\epsilon > 0$. Let $l := \max\{0, t_k - \epsilon\}$ and $r := \min\{T, t_k + \epsilon\}$. By negative derivative of $A^k$ at $t_k$, it follows $A_{l}^k > c^k$ and $A_r^k < c^k$. Define $m_k(\epsilon):=\min \left\{A_{l}^k-c^k, c^k-A_r^k\right\}>0$ and pick $\delta_k := \frac{1}{2}m_k(\epsilon)$, we have if $\|A - B\|_T < \delta_k$, then
\begin{equation*}
    \begin{aligned}
        B_{l}^k-c^k &\geq\left(A_{l}^k-c^k\right)-\left|B_{l}^k-A_{l}^k\right|>m_k(\epsilon)-\delta_k=\delta_k>0, \\
        B_r^k-c^k &\leq\left(A_r^k-c^k\right)+\left|B_r^k-A_r^k\right|<-m_k(\epsilon)+\delta_k=-\delta_k<0
    \end{aligned}
\end{equation*}
Therefore we have $B_l^k > c^k$ and $B_r^k < c^k$; by intermediate value theorem, there exists $s \in (l, r)$ that $B_s^k = c^k$. Since $B \in \mathcal{S}$, such $s$ is unique. We have $\tilde{t}_k(B) \in(\ell, r) \subseteq (t_k-\varepsilon, t_k+\epsilon)$ and this implies $|\tilde{t}_k(B) - t_k| < \epsilon$. \\
\noindent\textbf{Case 3 ($A_0^k = c^k$, where $\tilde{t}(A) = t_k = 0$). } Fix $\epsilon > 0$. Under this case, we have $A_t^k<c^k$ for any $t \in(0, T]$.  So by compactness we define $m_k(\epsilon):=\min _{t \in[\epsilon, T]}\left(c^k-A_t^k\right)>0$ and pick $\delta_k := \frac{1}{2}m_k(\epsilon)$. Note that if $\epsilon \geq T$, the meeting time difference is trivially bounded by $\epsilon$ by the form of $\tilde{t}$. Then if $\|A - B\|_T < \delta_k$, it follows for any $t \in [\epsilon, T]$, there is $B_t^k < c^k$. Therefore, if $B_t^k$ meets with $c^k$ at some time, it must happen within $[0,\epsilon)$ and immediately implies $|\tilde{t}_k(B) - t_k| = \tilde{t}_k(B) < \epsilon$; if $B_t^k$ doesn't meet with $c^k$, we also have $|\tilde{t}_k(B) - t_k| = 0 - 0 < \epsilon$. \\
Now we have proved the continuity of $\tilde{t}_k$ on $\mathcal{S}$, taking $\delta := \min_k \delta_k$ completes the proof.

\section*{Proof of Theorem~\ref{thm:aware_bang_bang}}
    We begin by proving the at-most one-jump property. The argument follows a technical path similar to that of Theorem~\ref{thm:charac_bang_bang}, though additional terms introduce extra technical subtleties and require further modifications. We therefore focus  on the parts of the proof that differ from the earlier result. 

    We first define $\delta_k:=c_\rmI^k-\max\{c_p^{k,\rmS}-c_p^{k,\rmI},0\}$. We have $\delta_k \geq c_\rmI^k - c_p^{k,\rmS} > 0$ by the assumption $c_\rmI^k > c_p^{k,\rmS}$. Also choose small enough time horizon $T > 0$ such that 
    $(c_I^k+c_p^{k,I})(1-e^{-\gamma^k T})<\delta_k, \forall k\in[K].$

    Fix $k \in [K]$. Since $u_t^k(\rmR) = 0$ for all $t \in [0,T]$, the state I value function solves
    $\dot u_t^k(\rmI)=\gamma^k u_t^k(\rmI)-c_\rmI^k-c_p^{k,\rmI}P_t(\rmI),
u_T^k(\rmI)=0$.
Using the variation of parameters method, we derive the integral form
$$u_t^k(\rmI)
=
\int_t^T e^{-\gamma^k(s-t)}\big(c_\rmI^k+c_p^{k,\rmI}P_s(\rmI)\big)ds.$$
We thereby obtain
\begin{equation}
    \label{eq:bdd_u_I_delta}
    0\leq \gamma^k u_t^k(\rmI)
\leq
(c_\rmI^k+c_p^{k,\rmI})(1-e^{-\gamma^k(T-t)})
\leq
(c_\rmI^k+c_p^{k,\rmI})(1-e^{-\gamma^kT})
<
\delta_k
\quad \forall t\in[0,T].
\end{equation}
The first inequality follows from the definition of the value function, and the second inequality is by direct integration. We first show that
$u_t^k(\rmI)>u_t^k(\rmS), \forall t\in[0,T)$. Observe that at time $T$, there is $\dot{u}_T^k(\rmS)=-c_p^{k,\rmS}P_T(\rmI)\leq 0$ and $\dot{u}_T^k(\rmI)=-c_\rmI^k-c_p^{k,\rmI}P_T(\rmI) < 0$. Subtract the two terms to have
$$\dot u_T^k(\rmS)-\dot u_T^k(\rmI)
=c_\rmI^k-(c_p^{k,\rmS}-c_p^{k,\rmI})P_T(\rmI)
\geq
c_\rmI^k-\max \{c_p^{k,\rmS}-c_p^{k,\rmI},0\}
=\delta_k>0.$$
Therefore $u^k_{T-}(\rmS)<u^k_{T-}(\rmI)$. By contradiction, assume $u^k_t(\rmS) < u^k_t(\rmI)$ does not hold for some $t \in [0,T)$ and let $\tau \in [0,T)$ be the largest time point at which $u^k_t(\rmS) = u^k_t(\rmI)$. The continuity of the value functions implies \begin{equation}
    \label{eq:us_less_ui_popaware}
    \dot u_\tau^k(\rmS)\le \dot u_\tau^k(\rmI).
\end{equation} On the other hand, it follows
$$\dot u_\tau^k(\rmS)
=(\kappa^k u_\tau^k(\rmS)-c_\nu^k)\hat\nu_\tau^k-c_p^{k,S}P_\tau(\rmI)
\geq
-c_p^{k,\rmS}P_\tau(\rmI) \text{ and } \dot u_\tau^k(\rmI)=\gamma^k u_\tau^k(\rmI)-c_\rmI^k-c_p^{k,\rmI}P_\tau(\rmI).$$
Subtracting the two inequalities gives
$$\dot{u}_\tau^k(\rmS)-\dot{u}_\tau^k(\rmI)
\ge
c_\rmI^k-(c_p^{k,\rmS}-c_p^{k,\rmI})P_\tau(\rmI)-\gamma^k u_\tau^k(\rmI)
\ge
\delta_k-\gamma^k u_\tau^k(\rmI)
>0,$$
where the last inequality follows from~\eqref{eq:bdd_u_I_delta}. This contradicts with~\eqref{eq:us_less_ui_popaware}, we obtain $u_t^k(\rmI)>u_t^k(\rmS), \forall t\in[0,T)$. It remains to show that each time $u_t^k(\rmS)$ meets the threshold $c_\nu^k / \kappa^k$, the crossing is strictly downward, that is, the derivative is negative at the meeting time. This argument is analogous to that in the proof of Theorem~\ref{thm:charac_bang_bang}, and is therefore omitted.
    \par
    We now prove that, for a fixed mean-field environment $Z$, the best response vaccination jump time under population-awareness is no less than the one under baseline formulation. Note that when $Z$ is fixed, the composite infected proportion $P(\rmI)$ is also fixed. We introduce an intensity parameter $\theta \in \mathbb{R}$ and consider the
    awareness cost terms for agent $k$ together as $\theta\big(c_p^{k,\rmS}P_s(\rmI)\mathds{1}_{\{X_s^k=\rmS\}}
+c_p^{k,\rmI}P_s(\rmI)\mathds{1}_{\{X_s^k=\rmI\}}\big)$ at time $s \in [0,T]$. Then consider the corresponding FBODE based on $\theta$, denote the value function that solves it as $u_t^{k,\theta}(e)$, it suffices to show that $u_t^{k,\theta}(\rmS)$ is non-decreasing on $\theta$. The superscript $\theta$ will be similarly notated for other terms throughout the rest of the proof, wherever needed. Fix any admissible control $(\alpha,\nu)$, we denote the total expected cost starting on time $t$ and state $e$
    \begin{equation*}
        \begin{aligned}
            J_{t, e}^{k, \theta}&(\alpha, \nu ; Z):=\mathbb{E}\Big[\int_t^T\Big\{f^k(s, X_s^k, \alpha_s, \nu_s)+\theta(c_p^{k,\rmS}P_s(\rmI)\mathds{1}_{\rmS}(X_s^k)
+c_p^{k,\rmI}P_s(\rmI)\mathds{1}_{\rmI}(X_s^k))\Big\} d s \mid X^k_t=e\Big],
        \end{aligned}
    \end{equation*} where $f^k$ is the running cost for group $k$ in the baseline model. Then for $\theta_2 \geq \theta_1 \geq 0$, there is
    $$\begin{aligned} J_{t, \rmS}^{k, \theta_2}(\alpha, \nu ;Z) &=J_{t, \rmS}^{k, \theta_1}(\alpha, \nu ; Z)+(\theta_2-\theta_1) \EE\Big[
\int_t^T
c_p^{k,\rmS}P_s(\rmI)\mathbf 1_{\rmS}(X_s^k)
+
c_p^{k,\rmI}P_s(\rmI)\mathbf 1_{\rmI}(X_s^k) ds
\mid X_t^k=\rmS
\Big] \\& \geq J_{t, \rmS}^{k, \theta_1}(\alpha, \nu; Z)\end{aligned}$$
    Taking the infimum over all admissible controls yields $u_t^{k,\theta_2}(\rmS)\ge u_t^{k,\theta_1}(\rmS)$ for all $t \in [0,T]$ and $k \in [K]$. This implies that $\hat\nu_t^{k,\theta_2}\geq \hat\nu_t^{k,\theta_1}$ and we thereby have $t_1^{k,\theta_2}\geq t_1^{k,\theta_1}$. In particular, taking $\theta_1=0$ and $\theta_2=1$ shows that the best response
vaccination jump time under population-awareness is no smaller than that under the
baseline model.

\end{document}